\newtheorem{theoremintro}{Theorem}
\renewcommand\thetheoremintro{\Roman{theoremintro}}
\newtheorem{theorem}{Theorem}[section]
\newtheorem{lemma}[theorem]{Lemma}
\newtheorem{corollary}[theorem]{Corollary}
\newtheorem{proposition}[theorem]{Proposition}
\theoremstyle{definition}
\newtheorem{definition}[theorem]{Definition}
\newtheorem{example}[theorem]{Example}
\theoremstyle{remark}
\begin{document}

\title{Discrepancy and numerical integration on metric measure
spaces }


 \author[L. Brandolini]{Luca Brandolini}
\address{Dipartimento di Ingegneria Gestionale, dell'Informazione e della Produzione,
  Universit\`a degli Studi di Bergamo,
  Viale Marconi 5, 24044 Dalmine (BG),
  Italy}
\curraddr{}
\email{luca.brandolinii@unibg.it}
\thanks{}

\author[W.W.L. Chen]{William W. L. Chen}
\address{Department of Mathematics,
Macquarie University,
Sydney, NSW 2109, Australia.}
\email{william.chen@mq.edu.au}
\thanks{}

\author[L. Colzani]{Leonardo Colzani}
\address{Dipartimento di Matematica e Applicazioni,
   Universit\`a degli Studi di Milano-Bicocca,
   Via R. Cozzi 55, 20125 Milano,
   Italy}
\curraddr{}
\email{leonardo.colzani@unimib.it}
\thanks{}

\author[G. Gigante]{Giacomo Gigante}
\address{Dipartimento di Ingegneria Gestionale, dell'Informazione e della Produzione,
  Universit\`a degli Studi di Bergamo,
  Viale Marconi 5, 24044 Dalmine (BG),
  Italy}
\curraddr{}
\email{giacomo.gigante@unibg.it}
\thanks{}

\author[G. Travaglini]{Giancarlo Travaglini}
\address{Dipartimento di Matematica e Applicazioni,
   Universit\`a degli Studi di Milano-Bicocca,
   Via R. Cozzi 55, 20125 Milano,
   Italy}
\curraddr{}
\email{giancarlo.travaglini@unimib.it}
\thanks{}


\subjclass[2010]{65D30, 11K38 (primary)}
\keywords{Discrepancy, numerical integration, metric measure spaces}

\begin{abstract}
We study here the error of numerical integration on metric measure spaces
adapted to a decomposition of the space into disjoint subsets. We consider
both the error for a single given function, and the worst case error for all
functions in a given class of potentials. The main tools are the classical
Marcinkiewicz--Zygmund inequality and \textit{ad hoc} definitions of function spaces on metric measure spaces. 
The same techniques are used to prove the existence
of point distributions in metric measure spaces with small $L^p$ discrepancy with respect to certain classes 
of subsets, for example metric balls.
\end{abstract}

\date{}

\maketitle

\section{Introduction}
The starting point of this research is a simplified version of Lemma 5 in \cite{C} which gives an upper bound
of higher norms of the discrepancy of a random set of points in the unit square $[0,1]^2$, treated as a torus. 
Let $N=M^2$ and consider a random set of $N$ points $\mathcal P$ as follows: 
Split the unit square into $N$ small squares $\{S_j\}_{j=1}^N$ of area $N^{-1}$ in the usual way. In each small square $S_j$
there is a random point $x_j$, uniformly distributed in the small square, independently of the distribution
of all the other random points in the other small squares.

Suppose that $\mathcal B$ is a convex set in $[0,1]^2$. Let $\mathcal J$ denote the set af all values of $j$ for which 
the small squares $S_j$ intersect the boundary  $\partial \mathcal B$ of $\mathcal B$. Then it is easy to see
that the cardinality of $\mathcal J$, $|\mathcal J|$, is $O(M)$. For each $j\in\mathcal J$, write 
\[
\xi_j=\chi_{\mathcal B}(x_j)=\left\{\begin{array}{ll}
1 & \text{if }x_j\in\mathcal B,\\
0 & \text{otherwise},
\end{array}
\right.
\]
and let $\eta_j=\xi_j-\mathbb E\xi_j$. Then, $|\eta_j|\leqslant 1$ and $\mathbb E\eta_j=0$.
Furthermore if we define the discrepancy as
\begin{equation}
\label{discrepancy}
D[\mathcal P, \mathcal B]:=\frac 1 N\sum_{j=1}^N\chi_\mathcal B(x_j)-|B|,
\end{equation}
then
\[
D[\mathcal P, \mathcal B]=\frac 1N\sum_{j\in\mathcal J}\eta_j.
\]
We now want to estimate $\mathbb E(|D[\mathcal P,\mathcal B]|^p)$ from above,
where $p$ is an even positive integer. Note first that
\[
|D[\mathcal P,\mathcal B]|^p=\frac1{N^p}\sum_{j_1\in\mathcal J}\ldots\sum_{j_p\in\mathcal J}\eta_{j_1}\ldots\eta_{j_p}
\]
and so
\begin{equation}
\label{combinat}
\mathbb E(|D[\mathcal P,\mathcal B]|^p)=\frac1{N^p}\sum_{j_1\in\mathcal J}\ldots\sum_{j_p\in\mathcal J}\mathbb E(\eta_{j_1}\ldots\eta_{j_p})
\end{equation}
The random variables $\eta_j$, where $j\in\mathcal J$, are independent because the distribution of the random points are 
independent of each other. If one of $j_1,\ldots, j_p$, say $j_i$, is different from all the others, then 
\[
\mathbb E(\eta_{j_1}\ldots\eta_{j_p})=\mathbb E(\eta_{j_i})\mathbb E(\eta_{j_1}\ldots\eta_{j_{i-1}}\eta_{j_{i+1}}\ldots\eta_{j_p})=0.
\]
It follows that the only non-zero contribution to the sum \eqref{combinat} comes from those terms where each of $j_1,\ldots,j_p$ appears
more than once. It can be shown that the major contribution comes when they appear in pairs, and there are
\[
O_p\left({{|\mathcal J|}\choose{p/2}}\right)=O_p({|\mathcal J|}^{p/2})=O_p({M^{p/2}})=O_p({N^{p/4}})
\]
such pairs. Bounding each of such terms $\mathbb E(\eta_{j_1}\ldots\eta_{j_p})$ trivially by $1$, we obtain the estimate
\begin{equation}
\label{basic}
\mathbb E(|D[\mathcal P,\mathcal B]|^p)=O_p(N^{-3p/4}).
\end{equation}

This result is the first step towards the proof of the existence of point sets with small $L^p$ discrepancy with respect to,
say, all discs in the square. Indeed, let $B(x,r)$ be the ball centered in the point $x$ and with radius $r$. Then, an application of the above 
estimate to the sets $B(x,r)$ gives
\begin{equation}
\int_0^{1/2}\int_{[0,1]^2}\mathbb E(|D[\mathcal P,B(x,r)|^p)dx\,dr=O_p(N^{-3p/4})
\end{equation}
and Fubini's theorem immediately implies 
the existence of an $N$-point set $\mathcal P$ such that
\begin{equation}
\label{discs}
\left(\int_0^{1/2}\int_{[0,1]^2}\left|D[\mathcal P,B(x,r)]\right|^p\,dx\,dr\right)^{1/p}=O_p(N^{-3/4}).
\end{equation}
By the monotonicity of the $L^p$ norms, one obtains these estimates for all $p<+\infty$.

This argument can be easily extended to a very general setting. In some sense, all that one needs for the argument to work is
\begin{enumerate}
\item a partition of the ambient space into $N$ parts with the same measure and similar diameter (the analog of the ``small squares'' in the previous argument);
\item a collection of sets with uniformly regular boundary, in such a way that the cardinality of the collection of indices 
$\mathcal J$ can be controlled uniformly in terms of the diameter of the ``small squares''. 
\end{enumerate}
We will therefore be able to replace the unit square with a Riemannian manifold, or more generally,  with metric measure spaces $\mathcal M$ having finite measure, 
as far as we assume that for any integer $N$  this space can be partitioned as required. By a recent result \cite{G-L}  this can be done under very general hypotheses.

In fact, we can replace the characteristic function of the set $\mathcal B$ with more general functions, so that our results are actually results on numerical integration. 
Consider the integral
\[
\int_{\mathcal{M}}f\left(  x\right)  dx
\]
of a function $f\left(  x\right)  $ over the metric measure space $\mathcal{M}$
with finite measure $dx$ and distance between two points $x$ and $y$ denoted with
$|x-y|$,  and the Riemann sums
\[
\sum_{j=1}^{N}\omega_{j}f\left(  x_{j}\right)  ,
\]
where $\left\{  x_{j}\right\}  _{j=1}^{N}$ are nodes in $\mathcal{M}$ and
$\left\{  \omega_{j}\right\}  _{j=1}^{N}$ are given weights. We are interested
in the rate of decay of the error
\[
{\sum_{j=1}^{N}}\omega_{j}f\left(  x_{j}\right)  -{\int_{\mathcal{M}}}f\left(
x\right)  dx
\]
as $N\rightarrow+\infty$. This decay depends on the smoothness of the function
$f\left(  x\right)  $, the weights $\left\{  \omega_{j}\right\}  _{j=1}^{N}$
and the distribution of the nodes $\left\{  x_{j}\right\}  _{j=1}^{N}$ in
$\mathcal{M}$. For references on this problem when the metric space is a
torus, a sphere, or more generally a compact Riemannian manifold, see, for
example, \cite{BCCGST}, \cite{BChGT}, \cite{BCGT}, \cite{BH}, \cite{BSSW},
\cite{He}, \cite{HMS}, \cite{HS1}, \cite{HS2}, \cite{HS3}, \cite{K}. For some
results related to metric measure spaces, see \cite{MM}, \cite{Str1}, \cite{Str2}.

Here we proceed as in the situation described before
for the study of the discrepancy, and partition $\mathcal{M}$ into $N$
disjoint measurable sets $\mathcal{X}_{1},\cdots,\mathcal{X}_{N}$ with
positive measure, set $\omega_{j}=\left\vert \mathcal{X}_{j}\right\vert $,
where $\left\vert \cdot\right\vert $ denotes the measure, and consider random
choices of points $x_{j}\in\mathcal{X}_{j}$.

To fix the notation we write $\boldsymbol{\omega}=\left(  \omega_{1}%
,\ldots,\omega_{N}\right)  $, $\mathbf{x}=\left(  x_{1},\ldots,x_{N}\right)
$, $\mathbf{X}=\mathcal{X}_{1}\times\cdots\times\mathcal{X}_{N}$,
$d\mathbf{x}=\frac{dx_{1}}{\omega_{1}}\times\cdots\times\frac{dx_{N}}%
{\omega_{N}}, $ and consider the probability space $\left(  \mathbf{X}%
,d\mathbf{x}\right)  $. We also write the error as
\begin{equation}
\label{err}
\mathcal{E}_{\mathbf{x},\mathbf{\omega}}\left(  f\right)  ={\sum_{j=1}^{N}%
}\omega_{j}f\left(  x_{j}\right)  -{\int_{\mathcal{M}}}f\left(  x\right)  dx.
\end{equation}
Notice the analogy with the discrepancy \eqref{discrepancy}.
There will be however an important difference:  
In order to measure the smoothness of our functions we will use suitable Besov spaces or potential spaces
adapted to this more general context  and
obtain estimates of $\mathcal{E}_{\mathbf{x},\mathbf{\omega}}\left(  f\right)
$ for functions in such spaces. 
The previous combinatorial argument, however, works only when the integrability exponent $p$
is an even integer, whereas in this case in order to obtain sharp results, we need estimates that work for 
generic values of $p$. 
The main idea is to replace such combinatorial argument with a generalization of the classical Khintchine inequality for sums 
of random variables, due to Marcinkiewicz and Zygmund \cite{MZ1,MZ2}. It says that for every $1\leqslant p <+\infty$ and for  
every sequence of independent random variables $\{f_j\}_{j=1}^N$
\[
 \mathbb{E}\left(  \left\vert {\displaystyle\sum_{j=1}^{N}}
\left(  f_{j}-\mathbb{E}\left(  f_{j}\right)  \right)  \right\vert
^{p}\right) \approx_p  \mathbb{E}\left(\left(
{\displaystyle\sum_{j=1}^{N}} \left\vert f_{j}-\mathbb{E}\left(  f_{j}\right)
\right\vert ^{2}\right)  ^{p/2}\right).
\]
For the case of discrepancy, this immediately gives 
\begin{equation*}
\mathbb E(|D[\mathcal P,\mathcal B]|^p)\approx_p\frac1{N^p}\mathbb E\left(\left(\sum_{j\in\mathcal J}|\eta_j|^{2}\right)^{p/2}\right)\leqslant
\frac1{N^p}\mathbb E\left(|\mathcal J|^{p/2}\right)=O(N^{-3p/4}).
\end{equation*}
We will see that one such argument can be also used to deduce estimates on the error in numerical integration.
In particular, we will study two types of problems. In the first case 
we will focus on the worst case numerical integration error,
which determines how bad the error of a given fixed quadrature rule can be 
when applied to all integrands whose norm has an upper bound. The function space that we will consider
for this type of problem is a space of potentials: We will say that 
$f\in\mathbb H^\Phi_p(\mathcal M)$, for $1\leqslant p\leqslant \infty$
and a suitable kernel $\Phi(x,y)$ defined on $\mathcal M\times\mathcal M$
(see \S \ref{sec 1} for the precise definition), if there is a $g\in\mathbb L^p(\mathcal M)$
such that $$f(x)=\int_{\mathcal M}\Phi(x,y)g(y)dy,$$ and its norm 
is $\|f\|_{\mathbb H^{\Phi}_p(\mathcal M)}=\inf_g\|g\|_{\mathbb L^p(\mathcal M)}$,
where the infimum is taken as $g$ varies among all functions satisfying 
the previous identity.
Observe that when $\mathcal M$ is the Euclidean space $\mathbb R^d$
and $\Phi(x,y)=|x-y|^{\alpha-d}$, $0<\alpha<d$, is the Riesz kernel,
then for $1<p<\infty$ the potential space $\mathbb H^{\Phi}_p(\mathcal M)$
coincides with the homogeneous fractional Sobolev space $\dot{\mathbb H}^\alpha_p(\mathbb R^d)$.
The (non-homogeneous) fractional Sobolev space ${\mathbb H}^\alpha_p(\mathbb R^d)$
can be obtained similarly, via the Bessel kernel. 
Also, when $\mathcal M$ is a compact Riemannian manifold, the Sobolev space  $\mathbb H^\alpha_p(\mathcal M)$ 
can be defined
as a potential space via the Bessel kernel, see Example \ref{example} here or \cite{BCCGST} for
details on this. We will show here the following
\begin{theoremintro} 
Let $\mathcal{M}$ be a metric measure space with the
property that there exist $d$ and $c$ such that for every $y\in\mathcal{M}$
and $r>0$,
\[
\left\vert \left\{  x\in\mathcal{M}:\left\vert x-y\right\vert \leqslant
r\right\}  \right\vert \leqslant cr^{d}.
\]
Assume also that $\mathcal{M}$ can be decomposed into a finite disjoint union
of sets in the form $\mathcal{M=X}_{1}\cup\cdots\cup\mathcal{X}_{N}$, with
$\omega_{j}=\left\vert \mathcal{X}_{j}\right\vert \approx N^{-1}$ and
$\delta_{j}=\operatorname*{diam}\left(  \mathcal{X}_{j}\right)  \approx
N^{-1/d}$. Assume that for some $0<\alpha<d$,
\[
\left\vert \Phi\left(  x,y\right)  \right\vert \leqslant c\left\vert
x-y\right\vert ^{\alpha-d} \quad \text{for every $x$ and $y$,}
\]
\[
\left\vert \Phi\left(  x,y\right)  -\Phi\left(  z,y\right)  \right\vert
\leqslant c\left\vert x-z\right\vert  \left\vert x-y\right\vert
^{\alpha-d-1} \quad \text{if $\left\vert x-y\right\vert \geqslant2\left\vert x-z\right\vert $.}
\]
Finally,
assume that $1<p\leqslant+\infty$, $1/p+1/q=1$ and $d/p<\alpha<d$. Then
\[
\left\{  {\displaystyle\int_{\mathbf{X}}} \sup_{\|f\|_{\mathbb H^\Phi_p}\leqslant 1}\left\vert \mathcal{E}%
_{\mathbf{x},\mathbf{\omega}}(f)\right\vert ^{q} d\mathbf{x}\right\}  ^{1/q}
\leqslant\left\{
\begin{array}
[c]{ll}%
cN^{-\alpha/d} & \text{if $\alpha<d/2+1$},\\
cN^{-1/2-1/d}\left(  \log N\right)  ^{1/2} & \text{if $\alpha
=d/2+1$},\\
cN^{-1/2-1/d} & \text{if $\alpha>d/2+1$}.
\end{array}
\right.
\]
\end{theoremintro}
The first observation is that the Bessel kernel on a compact 
Riemannian manifold satisfies the hypotheses required by this theorem.
In fact, the particular case given by the case $\mathcal M$
 a compact Riemannian manifold, $\Phi$ the Bessel kernel, $\alpha<d/2+1$,
and $p=2$ had been proved in \cite[Theorem 2.7]{BCCGST}
(see also \cite[Theorem 24]{BSSW} for the case of the sphere).

We also show that under rather natural hypotheses on the space 
$\mathcal M$ and on the kernel $\Phi$, the above estimates
from above hold from below as well.
\begin{theoremintro}
Let $\mathcal{M}$ be a metric measure space with the
property that there exist $H,K$ and $d$ such that for every $y\in\mathcal{M} $
and $0<r<r_{0}$,
\[
Hr^{d}\leqslant\left\vert \left\{  x\in\mathcal{M}:\left\vert x-y\right\vert
\leqslant r\right\}  \right\vert \leqslant Kr^{d}.
\]
Assume also that $\mathcal{M}$ can be decomposed into a finite disjoint union
of sets in the form $\mathcal{M=X}_{1}\cup\cdots\cup\mathcal{X}_{N}$, with
$\omega_{j}=\left\vert \mathcal{X}_{j}\right\vert \approx N^{-1}$ and
$\delta_{j}=\operatorname*{diam}\left(  \mathcal{X}_{j}\right)  \approx
N^{-1/d} $. Suppose that there exists $0<\alpha<d$, such
that for any $j=1,\ldots,N$ and any $z\in\mathcal{X}_{j}$, and for any $y$
such that $\operatorname{dist}\left(  y,\mathcal{X}_{j}\right)  \geqslant
2\delta_{j}$,
\[
\int_{\mathcal{X}_{j}}\left\vert \Phi\left(  x,y\right)  -\Phi\left(
z,y\right)  \right\vert dx\geqslant cN^{-1-1/d}\left(
\operatorname{dist}\left(  y,\mathcal{X}_{j}\right)  \right)  ^{\alpha
-d-1}.
\]
Suppose also that for any $y\in M$, the function $x\mapsto\Phi\left(
x,y\right)  $ is continuous in $x\neq y$. Finally, assume that $1<p\leqslant
+\infty$, $1/p+1/q=1$ and $d/p<\alpha<d$. Then
\[
\left\{  {\int_{\mathbf{X}}}\sup_{\|f\|_{\mathbb H^\Phi_p}\leqslant 1}\left\vert \mathcal{E}_{\mathbf{x},\mathbf{\omega
}}(f)\right\vert ^{q}d\mathbf{x}\right\}  ^{1/q}\geqslant\left\{
\begin{array}
[c]{ll}%
cN^{-\alpha/d} & \text{if $\alpha<d/2+1$},\\
cN^{-1/2-1/d}\left(  \log N\right)  ^{1/2} & \text{if $\alpha
=d/2+1$},\\
cN^{-1/2-1/d} & \text{if $\alpha>d/2+1$}.
\end{array}
\right.
\]
\end{theoremintro}

Once again, it should be observed that the Bessel kernel on a compact Riemannian manifold satisfies 
the hypotheses in the above theorem, and that the particular case given by the case where $\mathcal M$ is
the sphere, $\Phi$ the Bessel kernel, and $p=2$ had been proved in 
\cite[Theorems 24 and 25]{BSSW}.

In order to understand the two above results it could be interesting to recall the
following result \cite[Theorem 2.16]{BCCGST}:  Let $\mathcal M$
be a compact Riemannian manifold. For every
$1\leqslant p\leqslant \infty$ and $\alpha>d/p$ there exists $c>0$
such that for every distribution of points $\mathbf x=\{x_j\}_{j=1}^N$ and
weights $\mathbf \omega=\{\omega_j\}_{j=1}^N$, one has 
\[
\sup_{\|f\|_{\mathbb H^\alpha_p}\leqslant 1}
\left|\mathcal E_{\mathbf x,\mathbf \omega}(f)\right|\geqslant cN^{-\alpha/d}.
\]

In other words, the worst case error for any quadrature rule cannot have 
a better decay than $N^{-\alpha/d}$. Thus, Theorem I says that a random choice
of points $x_j\in\mathcal X_j$, $j=1,\ldots,N$, gives the best possible
decay for the worst case error in $\mathbb H^\alpha_p(\mathcal M)$ when $d/2<\alpha<d/2+1$,
while Theorem II says that when $\alpha\geqslant d/2+1$ the stratification strategy does not lead,
on average, to quadrature rules with the desired decay $N^{-\alpha/d}$ of the worst case error 
in $\mathbb H^\alpha_p(\mathcal M)$.

By the above mentioned result in \cite{G-L} on the possibility
of partitioning $\mathcal M$ into $N$ regions of equal measure and small diameter, 
under the hypotheses on $\mathcal M$ contained in Theorem II 
(see  \S \ref{sec GL} below), the above Theorems I and II apply with equal weights, that is $\omega_j=|\mathcal M|/N$
for all $j=1,\ldots,N$. Point configurations on the sphere that give the best possible decay $N^{-\alpha/d}$
for the worst case error in the equal weight case have been called Quasi Monte Carlo (QMC) designs
in \cite{BSSW}.

We would like to emphasize that all the cited results in \cite{BCCGST} and 
\cite{BSSW}  are based on Hilbert space techniques ($p=2$), while we were able to obtain 
$\mathbb L^p$ integrability results thanks to the Marcinkiewicz--Zygmund inequality. 
Moreover, we work in the more general setting of metric measure spaces, and some
of the results are new even in the particular case of compact Riemannian manifolds.

So far, we have considered the worst case error in numerical integration, that is the error for a whole class of functions. The second type of estimate that will be discussed here concerns the numerical integration
error for a given fixed function. In particular, we will consider functions in the homogeneous 
Haj\l asz--Besov space $\dot{\mathbb B}^\alpha_{p,\infty}(\mathcal M)$, as defined in \cite{KYZ}.
For details on these spaces, see \S \ref{sec 2} below. Here we should mention that 
when $0<\alpha<1$, the spaces $\mathbb H^{\Phi}_{p}(\mathcal M)$
as in Theorem I are embedded into $\dot{\mathbb B}^\alpha_{p,\infty}(\mathcal M)$,
and that when $\mathcal M$ is the Euclidean space $\mathbb R^d$ and $0<\alpha<1$
then the spaces $\dot{\mathbb B}^\alpha_{p,\infty}(\mathcal M)$ coincide with the
classical homogeneous Besov spaces defined via Littlewood--Paley decomposition.
The main result in this context is the following

\begin{theoremintro}
Assume that a metric measure space $\mathcal{M}$ can be
decomposed into a finite disjoint union of sets in the form $\mathcal{M=X}%
_{1}\cup\cdots\cup\mathcal{X}_{N}$, with measure $0<\left\vert \mathcal{X}%
_{j}\right\vert =\omega_{j}\approx N^{-1}$ and $0<\operatorname*{diam}\left(
\mathcal{X}_{j}\right)  \approx{N^{-1/d}}$.  Then for every  $1\leqslant p\leqslant2$ there is 
a constant $c$ such that 
\begin{equation*}
\left\{  {\int_{\mathbf{X}}}\left\vert \mathcal{E}_{\mathbf{x},\mathbf{\omega
}}\left(  f\right)  \right\vert ^{p}d\mathbf{x}\right\}  ^{1/p}\leqslant
c  N^{ 1/p-1-\alpha/d}   \left\Vert f\right\Vert
_{\dot{\mathbb{B}}_{p,\infty}^{\alpha}\left(  \mathcal{M}\right)  }%
\end{equation*}
and for every $2\leqslant p<+\infty$ there is a constant $c$ such that 
\begin{equation*}
\left\{  {\int_{\mathbf{X}}}\left\vert \mathcal{E}_{\mathbf{x},\mathbf{\omega
}}\left(  f\right)  \right\vert ^{p}d\mathbf{x}\right\}  ^{1/p}\leqslant
cN^{-1/2-\alpha/d}  \left\Vert f\right\Vert _{\dot{\mathbb{B}}_{p,\infty}^{\alpha}\left(
\mathcal{M}\right)  }.%
\end{equation*}
\end{theoremintro}
Of course, here a random choice of points $x_j\in\mathcal X_j$, $j=1,\ldots,N$ gives better estimates than those obtained in
Theorem I (and better than $N^{-\alpha/d}$). This is natural, since in this case we are looking for
point distributions which give a small error for a given integrand $f$, whereas in the situation described
by Theorem I we were looking for point distributions which give a small error for all integrands in
our space at the same time.

Theorem III and its sharpness will be discussed in \S\ref{numerical 2} below.
\medskip

We believe that our effort in the search
for the minimal properties that guarantee the validity of certain
results in discrepancy theory and numerical integration can be of some help
towards a deeper understanding of these results, even in the classical cases.
In fact, to the best of our knowledge, the above Theorems I, II and III are new even when applied to a general
compact Riemannian manifold.
 There has recently been some interest in this type
of problems in spaces as general as those considered here, or even more.
See for example \cite{MM}, \cite{RS}, \cite{Skr} for discrepancy and numerical integration on metric spaces and \cite{Str1}, \cite{Str2}  for analysis on fractals.
\medskip

The plan of the paper is the following. In \S \ref{sec 1} and \S \ref{sec 2} we introduce
the appropriate Sobolev-type spaces, and we recall a few results 
on how these spaces relate to each other. These matters are not completely new, but can be of
some help for the unfamiliar reader. In \S \ref{sec 3} we introduce in some detail the Marcinkiewicz--Zygmund 
inequality. In \S \ref{sec GL} we recall the above mentioned result in \cite{G-L} concerning the possibility
of partitioning a metric measure space into regions of equal measure and small diameter.
In \S \ref{numerical 1} and \S\ref{numerical 2} we give all the details on our results on numerical 
integration, with examples. Finally, \S \ref{discr} contains
our results on the $L^p$ (and $L^\infty$) discrepancy that generalize \eqref{basic} and \eqref{discs}.

\section{\label{sec 1}Sobolev spaces and potentials on measure spaces}

Our estimates on the worst case error described in Theorems I and II
require a definition of Sobolev spaces via
potentials. For a classical approach to potentials, see, for example, \cite{S}.

\begin{definition}
\label{def:potentials} Let $\mathcal{M}$ be a measure space, let $1\leqslant
p,q\leqslant+\infty$ with $1/p+1/q=1,$ and let $\Phi\left(  x,y\right)  $ be a
measurable kernel on $\mathcal{M}\times\mathcal{M}$. Assume that for every $x
$,
\[%
\begin{array}
[c]{rl}%
{{%
{\displaystyle\int_{\mathcal{M}}}
}\left\vert \Phi\left(  x,y\right)  \right\vert ^{q}dy<+\infty} & \text{if
$q<+\infty$},\\
{\underset{y\in\mathcal{M}}{\operatorname*{ess}\sup}\left\{  \left\vert
\Phi\left(  x,y\right)  \right\vert \right\}  <+\infty} & \text{if $q=+\infty
$}.
\end{array}
\]
Then every function $g(x)$ in $\mathbb{L}^{p}\left(  \mathcal{M}\right)  $ has
a pointwise well defined potential
\[
f\left(  x\right)  ={\int_{\mathcal{M}}}\Phi\left(  x,y\right)  g\left(
y\right)  dy.
\]
The space $\mathbb{H}_{p}^{\Phi}\left(  \mathcal{M}\right)  $ is the space of
all potentials of functions in $\mathbb{L}^{p}\left(  \mathcal{M}\right)  $,
with norm
\[
\left\Vert f\right\Vert _{\mathbb{H}_{p}^{\Phi}\left(  \mathcal{M}\right)
}=\inf_{g}\left\{  {\int_{\mathcal{M}}}\left\vert g\left(  x\right)
\right\vert ^{p}dx\right\}  ^{1/p}.
\]
The infimum is taken over all $g(x)$ which give the potential $f(x)$.
\end{definition}

Observe that this definition does not even require a metric. Potentials can
also be defined under weaker assumptions on the kernel, but the above
assumptions guarantee that these potentials are defined pointwise everywhere,
and this will be necessary in the sequel. In particular, when $\mathcal{M}$ is
the Euclidean space $\mathbb{R}^{d}$ and $\Phi\left(  x,y\right)  =\left\vert
x-y\right\vert ^{\alpha-d}$ with $0<\alpha<d$ is the Riesz kernel, then
$\mathbb{H}_{p}^{\Phi}\left(  \mathcal{M}\right)  $ is the homogeneous fractional Sobolev
space $\dot{\mathbb{H}}_{p}^{\alpha}\left(  \mathbb{R}^{d}\right)  $. However, the
cases $p=1$ and $p=+\infty$ require some extra care. For interesting examples
of generalized potential spaces, see, for example, \cite{HZ}.

\section{\label{sec 2}Besov and Triebel--Lizorkin spaces on metric measure
spaces}

Our estimates in Theorem III require a definition of Sobolev spaces, more appropriately 
Besov or Triebel--Lizorkin spaces, via
upper gradients. Let $\mathcal{M}$ be a metric measure space, that is, a
metric space equipped with a positive Borel measure. With a small abuse of
notation we denote by $\left\vert \mathcal{X}\right\vert $ the measure of a
measurable set $\mathcal{X}$ and by $\left\vert x-y\right\vert $ the distance
between two points $x$ and $y$. We will often denote with $B(x,r)$ the open balls 
$\{y\in\mathcal M:|x-y|<r\}$ with center 
$x$ and radius $r$. Simple examples are Riemannian manifolds, or
non necessarily smooth surfaces in a Euclidean space with the inherited
measure and distance. In \cite{H} Haj\l asz has given a purely metric
definition of a Sobolev space: A measurable function $f(x)$ is in the Sobolev
space $\mathbb{W}_{p}^{1}\left(  \mathcal{M}\right)  $, $1\leqslant
p\leqslant+\infty$, if there exists a non-negative function $g\left(
x\right)  $ in $\mathbb{L}^{p}\left(  \mathcal{M}\right)  $ such that for
almost every $x,y\in\mathcal{M}$,
\[
\left\vert f(x)-f(y)\right\vert \leqslant\left\vert x-y\right\vert \left(
g(x)+g(y)\right)  .
\]

For example, it is proved in \cite{H} that in Euclidean spaces one can choose
as an upper gradient $g(x)$ a suitable multiple of the Hardy--Littlewood maximal function of the
gradient $\nabla f\left(  x\right)  $.

The following is a natural generalization of upper gradient and associated
Sobolev space.

\begin{definition}
\label{def:hajlasz-Sobolev} Let $\mathcal{M}$ be a metric measure space and
$\varphi\left(  t\right)  $ a non-negative increasing function in
$t\geqslant0$. A measurable non-negative function $g\left(  x\right)  $ is a
$\varphi$-gradient of a measurable function $f\left(  x\right)  $ if there
exists a set $\mathcal{N} $ with measure zero such that for all $x$ and $y$ in
$\mathcal{M\setminus N}$,
\[
\left\vert f\left(  x\right)  -f\left(  y\right)  \right\vert \leqslant
\varphi\left(  \left\vert x-y\right\vert \right)  \left(  g\left(  x\right)
+g\left(  y\right)  \right)  .
\]

\end{definition}

\begin{definition}
A measurable function $f(x)$ is in the Haj\l asz--Sobolev space $\dot{\mathbb{M}}_{p}^{\varphi
}\left(  \mathcal{M}\right)  ,$ $0<p\leqslant+\infty$, if it has a $\varphi
$-gradient in $L^{p}\left(  \mathcal{M}\right)  $. We set
\[
\left\Vert f\right\Vert _{\dot{\mathbb{M}}_{p}^{\varphi}\left(  \mathcal{M}\right)
} =\inf\left\Vert g\right\Vert _{L^{p}\left(  \mathcal{M}\right)  }.
\]
The infimum is taken over all $\varphi$-gradients $g\left(  x\right)  $ of
$f\left(  x\right)  $.
\end{definition}

In \cite{H} Haj\l asz proved that when $\varphi(t)=t$, $1<p\le+\infty$ and ${\mathcal M}=\mathbb R^d$,
then this space coincides with the classical homogeneous Sobolev space $\dot{\mathbb H}^{1}_p\left(\mathbb R^d\right)$.
The above definition has been extended by Koskela, Yang and Zhou \cite{KYZ}
who have defined Besov and Triebel--Lizorkin spaces on a general metric
measure space. In particular they proved that, when $\varphi(t)=t^\alpha$ with $0<\alpha<1$ 
and ${\mathcal M}=\mathbb R^d$, then the space 
$\dot{\mathbb{M}}_{p}^{\varphi}\left(  \mathcal{M}\right)$ 
is larger than the classical fractional Sobolev space, and it coincides with the homogeneous Triebel--Lizorkin space
$\dot{\mathbb F}^{\alpha}_{p,\infty}\left(\mathbb R^d\right)$.

What follows is in the spirit of the definitions of Besov and Triebel--Lizorkin spaces in \cite{KYZ}. In order to
define these spaces one needs to introduce families of gradients localized at
different scales.

\begin{definition}
\label{def:succ phi-gradient} Let $\mathcal{M}$ be a metric measure space and
$\varphi\left(  t\right)  $ a non-negative increasing function in
$t\geqslant0$. Let $n_{0}=\log_{2}\left(  \operatorname*{diam}\left(
\mathcal{M}\right)  \right)  $, possibly infinity. A sequence of non-negative
measurable functions $\left\{  g_{n}\left(  x\right)  \right\}  _{-n_{0}%
}^{+\infty}$ is a $\varphi$-gradient for a measurable function $f\left(
x\right)  $ if there exists a set $\mathcal{N}$ with measure zero such that
for all $x$ and $y$ in $\mathcal{M\setminus N}$ with $\left\vert
x-y\right\vert \leqslant2^{-n}$,
\[
\left\vert f\left(  x\right)  -f\left(  y\right)  \right\vert \leqslant
\varphi\left(  2^{-n}\right)  \left(  g_{n}\left(  x\right)  +g_{n}\left(
y\right)  \right)  .
\]

\end{definition}

\begin{definition}
A measurable function $f(x)$ is in the Haj\l asz--Triebel--Lizorkin space $\dot{\mathbb{F}}%
_{p,q}^{\varphi}\left(  \mathcal{M}\right)  ,$ $0<p<+\infty$ and
$0<q\leqslant+\infty$, if $f(x)$ has a $\varphi$-gradient $\left\{
g_{n}\left(  x\right)  \right\}  $ with
\[%
\begin{array}
[c]{rl}%
{\left\Vert \left\{  {%
{\displaystyle\sum\limits_{n=-n_{0}}^{+\infty}}
}\left\vert g_{n}\left(  x\right)  \right\vert ^{q}\right\}  ^{1/q}\right\Vert
_{\mathbb{L}^{p}\left(  \mathcal{M}\right)  }<+\infty} & \text{if
\ $0<q<+\infty$},\\
{\left\Vert \sup_{n\geqslant-n_{0}}\left\vert g_{n}\left(  x\right)
\right\vert \right\Vert _{\mathbb{L}^{p}\left(  \mathcal{M}\right)  }<+\infty}
& \text{if \ $q=+\infty$}.
\end{array}
\]
The infimum of the above expression taken over all $\varphi$-gradients defines
the semi-norm $\left\Vert f\right\Vert _{\dot{\mathbb{F}}_{p,q}^{\varphi}\left(
\mathcal{M}\right)  }$.
\end{definition}

\begin{definition}
A measurable function $f(x)$ is in the Haj\l asz--Besov space $\dot{\mathbb{B}}_{p,q}^{\varphi
}\left(  \mathcal{M}\right)  $, $0<p\leqslant+\infty$ and $0<q\leqslant
+\infty$, if $f(x)$ has a $\varphi$-gradient $\left\{  g_{n}\left(  x\right)
\right\}  $ with
\[%
\begin{array}
[c]{rl}%
{\left\{  {%
{\displaystyle\sum\limits_{n=-n_{0}}^{+\infty}}
}\left\Vert g_{n}\left(  x\right)  \right\Vert _{\mathbb{L}^{p}\left(
\mathcal{M}\right)  }^{q}\right\}  ^{1/q}<+\infty} & \text{if \ $0<q<+\infty$%
},\\
{\sup_{n\geqslant-n_{0}}\left\Vert g_{n}\left(  x\right)  \right\Vert
_{\mathbb{L}^{p}\left(  \mathcal{M}\right)  }<+\infty} & \text{if
\ $q=+\infty$}.
\end{array}
\]
The infimum of the above expression taken over all $\varphi$-gradients defines
the semi-norm $\left\Vert f\right\Vert _{\dot{\mathbb{B}}_{p,q}^{\varphi}\left(
\mathcal{M}\right)  }$.
\end{definition}

Observe that the above spaces are homogeneous, and constant functions have
semi-norms equal to zero. Also observe that when $q=+\infty$ and $\varphi(t)$ is doubling,
that is $\varphi(2t)\le c\varphi(t)$, then the space
$\dot{\mathbb{F}}_{p,\infty}^{\varphi}\left(  \mathcal{M}\right)  $ coincides with
the previously defined Haj\l asz--Sobolev space $\dot{\mathbb{M}}_{p}^{\varphi}\left(  \mathcal{M}\right)  $. It
suffices to define $g\left(  x\right)  =\sup\left\{  g_{n}\left(  x\right)
\right\}  $. In particular, the straightforward generalization of a Haj\l asz--Sobolev space is
a Haj\l asz--Triebel--Lizorkin space.

When $\varphi\left(  t\right)  =t^{\alpha}$ the above definition is nothing
but the definition of Haj\l asz--Besov and Haj\l asz--Triebel--Lizorkin given in \cite{KYZ}. To be
precise, the definition of $\varphi$-gradient in \cite{KYZ} requires
\[
\left\vert f\left(  x\right)  -f\left(  y\right)  \right\vert \leqslant
2^{-\alpha n}\left(  g_{n}\left(  x\right)  +g_{n}\left(  y\right)  \right)
\]
only for $x$ and $y$ with $2^{-n-1}\leqslant\left\vert x-y\right\vert
\leqslant2^{-n}$. On the other hand, if one defines
\[
G_{n}\left(  x\right)  ={\sum_{k=0}^{+\infty}}2^{-\alpha k}g_{n+k}\left(
x\right)  ,
\]
then
\[
\left\vert f\left(  x\right)  -f\left(  y\right)  \right\vert \leqslant
2^{-\alpha n}\left(  G_{n}\left(  x\right)  +G_{n}\left(  y\right)  \right)
\]
for $x$ and $y$ with $\left\vert x-y\right\vert \leqslant2^{-n}$, and the
seminorms defined via $\left\{  g_{n}\left(  x\right)  \right\}  $ and
$\left\{  G_{n}\left(  x\right)  \right\}  $ are equivalent. In the same
paper, it is proved that when $\mathcal{M}$ is the Euclidean space
$\mathbb{R}^{d}$ and $\varphi\left(  t\right)  =t^{\alpha}$ with $0<\alpha<1$,
then the spaces $\dot{\mathbb{B}}_{p,q}^{\varphi}\left(  \mathcal{M}\right)  $ and
$\dot{\mathbb{F}}_{p,q}^{\varphi}\left(  \mathcal{M}\right)  $ coincide with the
classical Besov and Triebel--Lizorkin spaces defined via a Littlewood--Paley
decomposition. See, for example, \cite{BL} for the relevant definitions.

The lemma below gives an example of a function in the spaces $\dot{\mathbb{B}}%
_{p,q}^{\varphi}\left(  \mathcal{M}\right)  $ and $\dot{\mathbb{F}}_{p,q}^{\varphi
}\left(  \mathcal{M}\right)  $.

\begin{definition}
For every subset $\mathcal{B}$ in $\mathcal{M}$, define
\[
\psi_{\mathcal{B}}\left(  t\right)  =\left\vert \left\{  x\in\mathcal{B}%
:\operatorname*{dist}\left\{  x,\mathcal{M}\setminus\mathcal{B}\right\}
\leqslant t\right\}  \right\vert +\left\vert \left\{  x\in\mathcal{M}%
\setminus\mathcal{B}:\operatorname*{dist}\left\{  x,\mathcal{B}\right\}
\leqslant t\right\}  \right\vert .
\]

\end{definition}

For example, if $\mathcal{M}$ is a $d$-dimensional Riemannian manifold and
$\mathcal{B}$ is a bounded open set with regular boundary, then $\psi
_{\mathcal{B}}\left(  t\right)  \approx t$, while if $\psi_{\mathcal{B}%
}\left(  t\right)  \approx t^{\beta}$, then the boundary has Minkowski fractal
dimension $d-\beta$.

\begin{proposition}
\label{ex:upper_gradient} Let $\mathcal{B}$ be an arbitrary subset of
$\mathcal{M}$. Then
\begin{align*}
\left\Vert \chi_{\mathcal{B}}\right\Vert _{\dot{\mathbb{F}}_{p,\infty}^{\varphi
}\left(  \mathcal{M}\right)  }  &  \leqslant\left\{  \sum_{n=-n_{0}}^{+\infty}
\varphi\left(  2^{-n}\right)  ^{-p}\psi_{\mathcal{B}}\left(  2^{-n}\right)
\right\}  ^{1/p},\\
\left\Vert \chi_{\mathcal{B}}\right\Vert _{\dot{\mathbb{B}}_{p,\infty}^{\varphi}
\left(  \mathcal{M}\right)  }  &  \leqslant\sup_{n\geqslant-n_{0}} \left\{
\varphi\left(  2^{-n}\right)  ^{-1}\psi_{\mathcal{B}}\left(  2^{-n}\right)
^{1/p}\right\}  .
\end{align*}
In particular, if $\varphi\left(  t\right)  =t^{\alpha}$ and $\psi
_{\mathcal{B}}\left(  t\right)  \leqslant ct^{\beta}$, then $\chi
_{\mathcal{B}}\in\dot{\mathbb{F}}_{p,\infty}^{\varphi}\left(  \mathcal{M}\right)  $
for $p\alpha<\beta$ and $\chi_{\mathcal{B}}\in\dot{\mathbb{B}}_{p,\infty}^{\varphi
}\left(  \mathcal{M}\right)  $ for $p\alpha\leqslant\beta$.
\end{proposition}

\begin{proof}
It suffices to observe that a $\varphi$-gradient for the characteristic
function $\chi_{\mathcal{B}}\left( x\right) $ is given by
\[
g_{n}(x)=\left\{
\begin{array}
[c]{ll}%
\varphi\left( 2^{-n}\right) ^{-1} & \text{if $x\in\mathcal{B}$ and
$\operatorname*{dist}\left\{ x,\mathcal{M}\setminus\mathcal{B}\right\} \leqslant2^{-n}$%
},\\
0 & \text{otherwise}.
\end{array}
\right.
\]

Of course, there are other possible choices for the $\varphi$-gradient of
$\chi_{\mathcal B}\left(  x\right)  $, for example,
\[
g_{n}(x)=\left\{
\begin{array}
[c]{ll}%
\varphi\left(  2^{-n}\right)  ^{-1} & \text{if $x\in\mathcal{M}\setminus
\mathcal{B}$ and $\operatorname*{dist}\left\{  x,\mathcal{B}\right\}
\leqslant2^{-n} $},\\
0 & \text{otherwise}.
\end{array}
\right.
\]
\end{proof}

The following is an immediate consequence of the definitions.

\begin{proposition}
(i) If $q_{1}\leqslant q_{2}$ and $\varphi_{1}\left(  t\right)  \leqslant
\varphi_{2}\left(  t\right)  $, then
\[
\dot{\mathbb{B}}_{p,q_{1}}^{\varphi_{1}}\left(  \mathcal{M}\right)  \subseteq
\dot{\mathbb{B}}_{p,q_{2}}^{\varphi_{2}}\left(  \mathcal{M}\right)  \quad
\mbox{and}\quad\dot{\mathbb{F}}_{p,q_{1}}^{\varphi_{1}}\left(  \mathcal{M}\right)
\subseteq\dot{\mathbb{F}}_{p,q_{2}}^{\varphi_{2}}\left(  \mathcal{M}\right)  .
\]
(ii) For every $\varphi\left(  t\right)  $ and $0<p\leqslant+\infty$,
\[
\dot{\mathbb{B}}_{p,p}^{\varphi}\left(  \mathcal{M}\right)  ={\dot{\mathbb{F}}}%
_{p,p}^{\varphi}\left(  \mathcal{M}\right)  \quad\mbox{and}\quad
\dot{\mathbb{F}}_{p,\infty}^{\varphi}\left(  \mathcal{M}\right)  \subseteq
\dot{\mathbb{B}}_{p,\infty}^{\varphi}\left(  \mathcal{M}\right)  .
\]
In particular, for fixed $p$ and $\varphi\left(  t\right)  $, the largest
space in the scale of Haj\l asz--Besov and Haj\l asz--Triebel--Lizorkin spaces is $\dot{\mathbb{B}}%
_{p,\infty}^{\varphi}\left(  \mathcal{M}\right)  $.
\end{proposition}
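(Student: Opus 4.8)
The plan is to reduce everything to two elementary facts about the two layers of norms in the definitions, namely the inner $L^{p}$ (or pointwise $\ell^{q}$) layer and the outer $\ell^{q}$ (or $L^{p}$) layer. The only structural input about the gradients is the observation that enlarging $\varphi$ enlarges the class of admissible gradients. Concretely, for part (i) I would first note that if $\left\{ g_{n}\right\} $ is a $\varphi_{1}$-gradient for $f$, then because each $g_{n}$ is non-negative and $\varphi_{1}\left( 2^{-n}\right) \leqslant\varphi_{2}\left( 2^{-n}\right) $, the inequality defining a $\varphi_{1}$-gradient,
\[
\left\vert f\left( x\right) -f\left( y\right) \right\vert \leqslant\varphi_{1}\left( 2^{-n}\right) \left( g_{n}\left( x\right) +g_{n}\left( y\right) \right) ,
\]
immediately implies the same inequality with $\varphi_{2}$ in place of $\varphi_{1}$. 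Thus every $\varphi_{1}$-gradient is a $\varphi_{2}$-gradient, so the infimum defining the target semi-norm is taken over a class at least as large.

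Second, I would invoke the nesting of sequence spaces: for any non-negative sequence $\left( a_{n}\right) $ and $q_{1}\leqslant q_{2}$ one has $\left( \sum_{n}a_{n}^{q_{2}}\right) ^{1/q_{2}}\leqslant\left( \sum_{n}a_{n}^{q_{1}}\right) ^{1/q_{1}}$, with the obvious supremum interpretation when $q_{2}=+\infty$. For the Besov case I would apply this with $a_{n}=\left\Vert g_{n}\right\Vert _{L^{p}\left( \mathcal{M}\right) }$; for the Triebel--Lizorkin case I would apply it pointwise with $a_{n}=\left\vert g_{n}\left( x\right) \right\vert $ and then take the $L^{p}$ norm, which is monotone. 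Combining the two facts yields $\left\Vert f\right\Vert _{\mathbb{B}_{p,q_{2}}^{\varphi_{2}}}\leqslant\left\Vert f\right\Vert _{\mathbb{B}_{p,q_{1}}^{\varphi_{1}}}$ and likewise for $\mathbb{F}$, which gives the two inclusions in (i).

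For part (ii), the equality $\mathbb{B}_{p,p}^{\varphi}=\mathbb{F}_{p,p}^{\varphi}$ follows from Tonelli's theorem: for a fixed gradient $\left\{ g_{n}\right\} $,
\[
\left\Vert \left\{ \sum_{n}\left\vert g_{n}\left( x\right) \right\vert ^{p}\right\} ^{1/p}\right\Vert _{L^{p}\left( \mathcal{M}\right) }^{p}=\int_{\mathcal{M}}\sum_{n}\left\vert g_{n}\left( x\right) \right\vert ^{p}\,dx=\sum_{n}\left\Vert g_{n}\right\Vert _{L^{p}\left( \mathcal{M}\right) }^{p},
\]
so the two semi-norms are computed from identical quantities over the same class of gradients and therefore coincide; this uses $0<p<+\infty$, while the case $p=+\infty$ is read through the identification $\mathbb{F}_{p,\infty}^{\varphi}=\mathbb{W}_{p}^{\varphi}$ noted after the definitions. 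The inclusion $\mathbb{F}_{p,\infty}^{\varphi}\subseteq\mathbb{B}_{p,\infty}^{\varphi}$ is the $q=+\infty$ endpoint of the same mechanism: from the pointwise bound $\left\vert g_{n}\left( x\right) \right\vert \leqslant\sup_{m}\left\vert g_{m}\left( x\right) \right\vert $ I would take $L^{p}$ norms and then the supremum over $n$ to obtain $\sup_{n}\left\Vert g_{n}\right\Vert _{L^{p}\left( \mathcal{M}\right) }\leqslant\left\Vert \sup_{m}\left\vert g_{m}\right\vert \right\Vert _{L^{p}\left( \mathcal{M}\right) }$, hence the semi-norm inequality.

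I do not expect any genuine obstacle here; the author's remark that this is an immediate consequence of the definitions is accurate. The only points demanding care are bookkeeping ones: keeping straight the opposite orders of the two operations in the Besov versus Triebel--Lizorkin semi-norms (sum-then-integrate against integrate-then-sum), checking that the $\ell^{q}$ nesting runs in the direction that makes the larger $q$ give the smaller norm, and confirming that the non-negativity of the gradients is precisely what lets the $\varphi$-comparison pass through cleanly.
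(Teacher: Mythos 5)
Your argument is correct and is exactly the routine unpacking of the definitions that the paper has in mind when it says the proposition is ``an immediate consequence of the definitions'' and omits the proof. All the key points are in order: non-negativity of the $g_{n}$ lets the comparison $\varphi_{1}\leqslant\varphi_{2}$ pass through the gradient inequality, the $\ell^{q}$-nesting is applied at the correct layer (outer for $\mathbb{B}$, pointwise inner for $\mathbb{F}$), Tonelli handles $\mathbb{B}_{p,p}=\mathbb{F}_{p,p}$, and the $q=+\infty$ endpoint follows from $\sup_{n}\left\Vert g_{n}\right\Vert _{L^{p}}\leqslant\left\Vert \sup_{n}\left\vert g_{n}\right\vert \right\Vert _{L^{p}}$.
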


In the Euclidean spaces it is well known that the homegeneous potential spaces $\dot{\mathbb{H%
}}_{p}^{\alpha }\left( \mathbb{R}^{d}\right) $ defined via the Riesz
potentials coincide with the homogeneous Triebel--Lizorkin spaces $\dot{\mathbb{F}}%
_{p,2}^{\alpha }\left( \mathbb{R}^{d}\right) $ defined via the Littlewood--Paley decomposition
 \cite{BL,FJW}. We do not know under which assumptions on $\Phi
\left( x,y\right) $ and $\varphi \left( t\right) $ and $\mathcal{M}$ the
equality $\mathbb{H}_{p}^{\Phi }\left( \mathcal{M}\right) =\dot{\mathbb{F}}%
_{p,2}^{\varphi }\left( \mathcal{M}\right) $ holds. Anyhow, the following proposition guarantees 
a weaker embedding.

\begin{proposition} Assume that $\psi \left( t\right) $
is an increasing function on $0\leqslant t<+\infty $ with $\psi \left(
0\right) =0$, and define for $\varepsilon>0$ %
\begin{equation*}
\varphi \left( t\right) =\sum_{k=0}^{+\infty }\psi \left( 2^{2-k}t\right)
+\sum_{k=0}^{+\infty }2^{-(k+1)\varepsilon}\psi \left( 2^{k+2}t\right) .
\end{equation*}
Also assume that $\Phi \left( x,y\right) $ is a kernel on %
$\mathcal{M}\times \mathcal{M}$, with the property that for some $%
C>0$,%
\begin{align*}
\left\vert \Phi \left( x,y\right) \right\vert & \leqslant C\frac{\psi \left( \left\vert
x-y\right\vert \right)}{ \left\vert B\left( x,6\left\vert x-y\right\vert
\right) \right\vert} \quad \text{ for every $x,y\in \mathcal M$,}\\
\left\vert \Phi \left( x,y\right) -\Phi \left( z,y\right) \right\vert &\leqslant
C\left(\frac{\left\vert x-z\right\vert}{ \left\vert x-y\right\vert}\right) ^{\varepsilon}\frac{\psi \left(
\left\vert x-y\right\vert \right)}{ \left\vert B\left( x,6\left\vert
x-y\right\vert \right) \right\vert }\quad\text{ when }|x-y|\geqslant2|x-z|.
\end{align*}
Define the potential %
\begin{equation*}
f\left( x\right) =\int_{\mathcal{M}}\Phi \left( x,y\right) g\left( y\right)
dy.
\end{equation*}
Finally, define the Hardy--Littlewood maximal operator %
\begin{equation*}
Mg\left( x\right) =\sup_{r>0}\left\{ \left\vert B\left( x,3r\right)
\right\vert ^{-1}\int_{\left\{ \left\vert x-y\right\vert \leqslant r\right\}
}\left\vert g\left( y\right) \right\vert dy\right\} .
\end{equation*}
Then, %
\begin{equation*}
\left\vert f\left( x\right) -f\left( z\right) \right\vert \leqslant 2C\varphi
\left( \left\vert x-z\right\vert \right) \left( Mg\left( x\right) +Mg\left(
z\right) \right) .
\end{equation*}
\end{proposition}

\begin{proof} 
By the hypotheses on the kernel,
\begin{align*}
\left\vert f\left( x\right) -f\left( z\right) \right\vert &\leqslant \int_{%
\mathcal{M}}\left\vert \Phi \left( x,y\right) -\Phi \left( z,y\right)
\right\vert \left\vert g\left( y\right) \right\vert dy \\
&\leqslant C\int_{\left\{ \left\vert x-y\right\vert \leqslant 4\left\vert
x-z\right\vert \right\} }\psi \left( \left\vert x-y\right\vert \right)
\left\vert B\left( x,6\left\vert x-y\right\vert \right) \right\vert
^{-1}\left\vert g\left( y\right) \right\vert dy \\
&+C\int_{\left\{ \left\vert z-y\right\vert \leqslant 4\left\vert x-z\right\vert
\right\} }\psi \left( \left\vert z-y\right\vert \right) \left\vert B\left(
z,6\left\vert z-y\right\vert \right) \right\vert ^{-1}\left\vert g\left(
y\right) \right\vert dy \\
&+C\int_{\left\{ \left\vert x-y\right\vert \geqslant 2\left\vert x-z\right\vert
\right\} }\left\vert x-z\right\vert^{\varepsilon} \left\vert x-y\right\vert ^{-\varepsilon}\psi
\left( \left\vert x-y\right\vert \right) \left\vert B\left( x,6\left\vert
x-y\right\vert \right) \right\vert ^{-1}\left\vert g\left( y\right)
\right\vert dy.
\end{align*}
The dyadic decomposition 
\[
\left\{ \left\vert x-y\right\vert \leqslant 4\left\vert
x-z\right\vert \right\} =\bigcup _{k=0}^{+\infty }\left\{ 2^{1-k}\left\vert
x-z\right\vert < \left\vert x-y\right\vert \leqslant 2^{2-k}\left\vert
x-z\right\vert \right\}
\] 
gives 
\begin{align*}
&\int_{\left\{ \left\vert x-y\right\vert \leqslant 4\left\vert x-z\right\vert
\right\} }\psi \left( \left\vert x-y\right\vert \right) \left\vert B\left(
x,6\left\vert x-y\right\vert \right) \right\vert ^{-1}\left\vert g\left(
y\right) \right\vert dy \\
\leqslant &\sum_{k=0}^{+\infty }\psi \left( 2^{2-k}\left\vert x-z\right\vert
\right) \left\vert B\left( x,6\cdot 2^{1-k}\left\vert x-z\right\vert \right)
\right\vert ^{-1}\int_{\left\{ \left\vert x-y\right\vert \leqslant
2^{2-k}\left\vert x-z\right\vert \right\} }\left\vert g\left( y\right)
\right\vert dy \\
\leqslant& \sum_{k=0}^{+\infty }\psi \left( 2^{2-k}\left\vert x-z\right\vert
\right) \sup_{r>0}\left\{ \left\vert B\left( x,3r\right) \right\vert
^{-1}\int_{\left\{ \left\vert x-y\right\vert \leqslant r\right\} }\left\vert
g\left( y\right) \right\vert dy\right\} \\
\leqslant & \varphi \left( \left\vert x-z\right\vert \right) Mg\left( x\right) .
\end{align*}
Similarly, 
\begin{equation*}
\int_{\left\{ \left\vert z-y\right\vert \leqslant 4\left\vert x-z\right\vert
\right\} }\psi \left( \left\vert z-y\right\vert \right) \left\vert B\left(
z,6\left\vert z-y\right\vert \right) \right\vert ^{-1}\left\vert g\left(
y\right) \right\vert dy\leqslant \varphi \left( \left\vert x-z\right\vert \right)
Mg\left( z\right) .
\end{equation*}
Finally, the dyadic decomposition 
\[
\left\{ \left\vert x-y\right\vert \geqslant
2\left\vert x-z\right\vert \right\} =\bigcup _{k=0}^{+\infty }\left\{
2^{k+1}\left\vert x-z\right\vert \leqslant \left\vert x-y\right\vert <
2^{k+2}\left\vert x-z\right\vert \right\}
\]
gives 
\begin{align*}
&\int_{\left\{ \left\vert x-y\right\vert \geqslant 2\left\vert x-z\right\vert
\right\} }\left\vert x-z\right\vert^{\varepsilon} \left\vert x-y\right\vert ^{-\varepsilon}\psi
\left( \left\vert x-y\right\vert \right) \left\vert B\left( x,6\left\vert
x-y\right\vert \right) \right\vert ^{-1}\left\vert g\left( y\right)
\right\vert dy \\
\leqslant& \sum_{k=0}^{+\infty }2^{-(k+1)\varepsilon}\psi \left( 2^{k+2}\left\vert
x-z\right\vert \right) \left\vert B\left( x,6\cdot 2^{k+1}\left\vert
x-z\right\vert \right) \right\vert ^{-1}\int_{\left\{ \left\vert
x-y\right\vert \leqslant 2^{k+2}\left\vert x-z\right\vert \right\} }\left\vert
g\left( y\right) \right\vert dy \\
\leqslant& \varphi \left( \left\vert x-z\right\vert \right) Mg\left( x\right) .
\end{align*}
 \end{proof}

\begin{corollary}
\label{ex:potential_vs_Hajlasz}
With the notation of the above proposition, if
$1<p\leqslant +\infty $ then the potential space $\mathbb{H}_{p}^{\Phi
}\left( \mathcal{M}\right) $ can be continuously embedded into $%
\dot{\mathbb{F}}_{p,\infty }^{\varphi }\left( \mathcal{M}\right) $.
\end{corollary}

\begin{proof} It suffices to recall that, due to the extra $3$ in the
definition of $Mg\left( x\right) $, this maximal operator $Mg\left( x\right) 
$ is bounded on $\mathbb{L}^{p}\left( \mathcal{M}\right) $ for all $1<p\leqslant
+\infty $, even when the measure on the metric space is non doubling. See
\cite{NTV}.
 \end{proof}

\begin{example} 
If $\varphi \left( t\right) =t^{\alpha }$ with $0<\alpha
<\varepsilon $, then 
\begin{align*}
\psi \left( t\right) &=\sum_{k=0}^{+\infty }\varphi \left( 2^{2-k}t\right)
+\sum_{k=0}^{+\infty }2^{(-k-1)\varepsilon}\varphi \left( 2^{k+2}t\right) \\
&=\left( 2^{2\alpha }\sum_{k=0}^{+\infty }2^{-k\alpha }+2^{2\alpha
-\varepsilon}\sum_{k=0}^{+\infty }2^{-k\left( \varepsilon-\alpha \right) }\right) t^{\alpha
}=Ct^{\alpha }.
\end{align*}
Thus, if 
\begin{align*}
\left\vert \Phi \left( x,y\right) \right\vert & \leqslant C\frac{\left\vert
x-y\right\vert ^\alpha}{ \left\vert B\left( x,6\left\vert x-y\right\vert
\right) \right\vert}  \quad \text{for every $x,y\in\mathcal{M}$},\\
\left\vert \Phi \left( x,y\right) -\Phi \left( z,y\right) \right\vert &\leqslant
C\left(\frac{\left\vert x-z\right\vert}{ \left\vert x-y\right\vert}\right) ^{\varepsilon}\frac{ 
\left\vert x-y\right\vert ^\alpha}{ \left\vert B\left( x,6\left\vert
x-y\right\vert \right) \right\vert }\quad\text{ when }|x-y|\geqslant2|x-z|.
\end{align*}
and if
$1<p\leqslant +\infty $ then the potential space $\mathbb{H}_{p}^{\Phi
}\left( \mathcal{M}\right) $ can be continuously embedded into $%
\dot{\mathbb{F}}_{p,\infty }^{\varphi }\left( \mathcal{M}\right) $ with $\varphi(t)=t^\alpha$.

\end{example}

\section{\label{sec 3}The Marcinkiewicz--Zygmund inequality}

The main ingredient in what follows is the Marcinkiewicz--Zygmund inequality
for sums of independent random variables.

As is well known, the variance of the sum of independent random variables is
the sum of the variances. For every sequence of independent random variables
$f_{j}$,
\[
\mathbb{E}\left(  \left\vert \sum_{j}\left(  f_{j} -\mathbb{E}\left(
f_{j}\right)  \right)  \right\vert ^{2}\right)  =\sum_{j}\mathbb{E}\left(
\left\vert f_{j} -\mathbb{E}\left(  f_{j}\right)  \right\vert ^{2}\right)  .
\]

In fact, there is a similar result with the second moment replaced by other
moments and with the equality replaced by two inequalities.

\begin{theorem}
[\textbf{Marcinkiewicz--Zygmund}]For every $1\leqslant p<+\infty$, there exist
positive constants $A\left(  p\right)  $ and $B\left(  p\right)  $ such that
for every sequence $\{f_{j}\}$ of independent random variables,
\begin{multline*}
A\left(  p\right)  \left\{  \mathbb{E}\left(  {\displaystyle\sum_{j=1}^{N}}
\left\vert f_{j}-\mathbb{E}\left(  f_{j}\right)  \right\vert ^{2}\right)
^{p/2}\right\}  ^{1/p} \hskip 3cm\\
\leqslant\left\{  \mathbb{E}\left(  \left\vert {\displaystyle\sum_{j=1}^{N}}
\left(  f_{j}-\mathbb{E}\left(  f_{j}\right)  \right)  \right\vert
^{p}\right)  \right\}  ^{1/p}\\
\hskip 3cm \leqslant B\left(  p\right)  \left\{  \mathbb{E}\left(
{\displaystyle\sum_{j=1}^{N}} \left\vert f_{j}-\mathbb{E}\left(  f_{j}\right)
\right\vert ^{2}\right)  ^{p/2}\right\}  ^{1/p}.
\end{multline*}

\end{theorem}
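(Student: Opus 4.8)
The plan is to deduce the two-sided estimate from Khintchine's inequality for Rademacher sums by means of a symmetrization argument. First I would reduce to the centered case: replacing each $f_{j}$ by $\tilde{f}_{j}=f_{j}-\mathbb{E}(f_{j})$ changes neither side of the asserted inequality, so I may assume $\mathbb{E}(f_{j})=0$ and must compare $\mathbb{E}|\sum_{j}f_{j}|^{p}$ with $\mathbb{E}(\sum_{j}|f_{j}|^{2})^{p/2}$. Throughout I write $S=\sum_{j=1}^{N}f_{j}$ and let $\{f_{j}'\}$ be an independent copy of the family $\{f_{j}\}$, so that the variables $g_{j}=f_{j}-f_{j}'$ are independent and symmetric, and $\sum_{j}g_{j}=S-S'$.

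The symmetrization step consists of two short comparisons. Since the $f_{j}'$ are centered, Jensen's inequality applied in the primed variables gives $\mathbb{E}|S|^{p}=\mathbb{E}|\mathbb{E}'(S-S')|^{p}\leqslant\mathbb{E}|S-S'|^{p}$, while Minkowski's inequality together with the equality in distribution of $S$ and $S'$ gives $\mathbb{E}|S-S'|^{p}\leqslant 2^{p}\,\mathbb{E}|S|^{p}$. Hence $\mathbb{E}|S|^{p}$ and $\mathbb{E}|\sum_{j}g_{j}|^{p}$ are comparable up to the factor $2^{p}$, and it suffices to estimate the latter.

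Because each $g_{j}$ is symmetric, the family $\{g_{j}\}$ has the same joint distribution as $\{\varepsilon_{j}g_{j}\}$, where $\{\varepsilon_{j}\}$ are independent Rademacher variables independent of everything else. Conditioning on the $g_{j}$ and applying Khintchine's inequality to the Rademacher sum $\sum_{j}\varepsilon_{j}g_{j}$ yields, after taking expectations,
\[
a_{p}\,\mathbb{E}\Big(\sum_{j}|g_{j}|^{2}\Big)^{p/2}\leqslant\mathbb{E}\Big|\sum_{j}g_{j}\Big|^{p}\leqslant b_{p}\,\mathbb{E}\Big(\sum_{j}|g_{j}|^{2}\Big)^{p/2}.
\]
It then remains to replace the square function of the $g_{j}$ by that of the $f_{j}$. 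For the upper estimate I would use $\sum_{j}|g_{j}|^{2}\leqslant 2\sum_{j}|f_{j}|^{2}+2\sum_{j}|f_{j}'|^{2}$ together with the elementary bound for $t\mapsto t^{p/2}$ (subadditivity when $p\leqslant 2$, convexity when $p\geqslant 2$) and the identical distributions of the two copies. For the lower estimate, conditioning on the unprimed variables reduces matters to the pointwise inequality $\mathbb{E}'\,\|a-f'\|_{2}^{p}\geqslant\|a\|_{2}^{p}$, valid because $x'\mapsto\|a-x'\|_{2}^{p}$ is convex (a norm raised to a power $p\geqslant 1$) and the $f_{j}'$ are centered; Jensen then gives $\mathbb{E}(\sum_{j}|g_{j}|^{2})^{p/2}\geqslant\mathbb{E}(\sum_{j}|f_{j}|^{2})^{p/2}$. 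Chaining these comparisons produces the constants $A(p)$ and $B(p)$.

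The main point, and the only nonelementary ingredient, is Khintchine's inequality itself. If a self-contained argument is wanted, I would obtain its upper half for all $p$ from the subgaussian estimate $\mathbb{E}\exp(t\sum_{j}\varepsilon_{j}a_{j})=\prod_{j}\cosh(ta_{j})\leqslant\exp(\tfrac{1}{2}t^{2}\sum_{j}a_{j}^{2})$, and its lower half for $p<2$ by interpolating the identity $\|\sum_{j}\varepsilon_{j}a_{j}\|_{2}=(\sum_{j}a_{j}^{2})^{1/2}$ between $L^{p}$ and some $L^{p_{1}}$ with $p_{1}>2$ via H\"older, using the already established upper bound at $p_{1}$; the case $p\geqslant 2$ of the lower half is immediate from $\|\cdot\|_{2}\leqslant\|\cdot\|_{p}$.
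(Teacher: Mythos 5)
Your proposal is correct, and it is the standard symmetrization-plus-Khintchine proof of the Marcinkiewicz--Zygmund inequality. Note that the paper itself offers no proof of this theorem: it states it and refers the reader to the original papers of Marcinkiewicz and Zygmund and to Chow--Teicher, so there is no in-paper argument to compare against step by step. That said, your route is exactly the one implicit in the paper's subsequent remark that $\tfrac{1}{2}\overline{A}(p)\leqslant A(p)$ and $B(p)\leqslant 2\overline{B}(p)$, where $\overline{A}(p),\overline{B}(p)$ are the Khintchine constants: the factor $2$ is precisely the loss in your symmetrization step ($\|S\|_p\leqslant\|S-S'\|_p\leqslant 2\|S\|_p$ and the corresponding two-sided comparison of the square functions of $\{g_j\}$ and $\{f_j\}$, where for the upper direction the cleanest bookkeeping is the pointwise triangle inequality $(\sum_j|g_j|^2)^{1/2}\leqslant(\sum_j|f_j|^2)^{1/2}+(\sum_j|f_j'|^2)^{1/2}$ followed by Minkowski in $L^p$, which gives constant $2\overline{B}(p)$ rather than the slightly worse constant from your $2\sum|f_j|^2+2\sum|f_j'|^2$ bound). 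All the individual steps check out: the Jensen argument $\mathbb{E}|\mathbb{E}'(S-S')|^p\leqslant\mathbb{E}|S-S'|^p$, the distributional identity $\{g_j\}\overset{d}{=}\{\varepsilon_jg_j\}$ justifying the conditional application of Khintchine, and the convexity of $x'\mapsto\|a-x'\|_2^p$ for the lower square-function comparison. The only caveats worth recording are minor: if the $f_j$ are complex-valued (as they effectively are elsewhere in the paper) one applies Khintchine to real and imaginary parts separately at the cost of an absolute constant, and one should dispose of the degenerate cases where the relevant moments are infinite before manipulating them.
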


The Marcinkiewicz--Zygmund inequality is a generalization of the classical
inequality of Khintchine for sums of random variables with Rademacher
distribution that take values $\pm1$ with probability $1/2$. For a proof, see
\cite{MZ1} and \cite{MZ2}, or \cite{CT}.

In what follows, special attention will be paid to the constants, and
$A\left(  p\right)  $ and $B\left(  p\right)  $ will denote the best constants
in the Marcinkiewicz--Zygmund inequality. If $\overline{A}\left(  p\right)  $
and $\overline{B}\left(  p\right)  $ are the corresponding best constants for
the Khintchine inequality, then it can be proved that
\[
\tfrac{1}{2}\overline{A}\left(  p\right)  \leqslant A\left(  p\right)
\leqslant\overline{A}\left(  p\right)  \quad\text{and}\quad\overline{B}\left(
p\right)  \leqslant B\left(  p\right)  \leqslant2\overline{B}\left(  p\right)
.
\]
See, for example, \cite{CT}. In particular there is a positive constant $c$
such that $c\leqslant\overline A(p)\leqslant1$, while $\overline B\left(
p\right) =1$ for $1\leqslant p\leqslant2$ and $\overline B(p)=\sqrt{2}\left(
\Gamma\left( ({p+1})/{2}\right) / \sqrt{\pi}\right) ^{1/p}$ for $2\leqslant
p<+\infty$. See \cite{Bu} and \cite{Haa}.

We remark that the Marcinkiewicz--Zygmund inequality can also be extended to
infinite sums of independent random variables.

From now on we will assume that $\mathcal{M}$ is a measure space of finite
measure which can be expressed as a finite union $\mathcal{X}_{1}\cup
\cdots\cup\mathcal{X}_{N}$ of disjoint sets $\mathcal{X}_{1},\cdots
,\mathcal{X}_{N}$, with measure $0<\left\vert \mathcal{X}_{j}\right\vert
=\omega_{j}<+\infty$.

As indicated earlier, we write $\boldsymbol{\omega}=\left(  \omega_{1}%
,\ldots,\omega_{N}\right)  $, $\mathbf{x}=\left(  x_{1},\ldots,x_{N}\right)
$, $\mathbf{X}=\mathcal{X}_{1}\times\cdots\times\mathcal{X}_{N}$ and
\[
d\mathbf{x}=\frac{dx_{1}}{\omega_{1}}\times\cdots\times\frac{dx_{N}}%
{\omega_{N}}.
\]

The error incurred in a quadrature rule with sampling points $\mathbf{x}=\left(
x_{1},\ldots,x_{N}\right)  $ and weights $\boldsymbol{\omega}=\left(
\omega_{1},\ldots,\omega_{N}\right)  $ is the functional
\[
\mathcal{E}_{\mathbf{x,\omega}}\left(  f\right)  ={\sum_{j=1}^{N}}\omega
_{j}f\left(  x_{j}\right)  -\int_{\mathcal{M}}f\left(  x\right)  dx.
\]
The above Marcinkiewicz--Zygmund inequality has an immediate corollary that
allows us to control the norm of this error.

\begin{corollary}
\label{Cor-MZ} Let $1\leqslant p<+\infty$. For every measurable function
$f\left(  x\right)  $ on $\mathcal{M}$,
\begin{multline*}
A\left( p\right)  \left\{  {\int_{\mathbf{X}}}\left(  {\sum_{j=1}^{N}%
}\left\vert \omega_{j}f\left(  x_{j}\right)  -\int_{\mathcal{X}_{j}}f\left(
y_{j}\right)  dy_{j}\right\vert ^{2}\right)  ^{p/2}d\mathbf{x}\right\}
^{1/p}\\
\leqslant\left\{  {\int_{\mathbf{X}}}\left\vert \mathcal{E}_{\mathbf{x,\omega
}}\left(  f\right)  \right\vert ^{p}d\mathbf{x}\right\}  ^{1/p}\qquad\\
\leqslant B\left(  p\right)  \left\{  {\int_{\mathbf{X}}}\left(  {\sum
_{j=1}^{N}}\left\vert \omega_{j}f\left(  x_{j}\right)  -\int_{\mathcal{X}_{j}%
}f\left(  y_{j}\right)  dy_{j}\right\vert ^{2}\right)  ^{p/2}d\mathbf{x}%
\right\}  ^{1/p}.
\end{multline*}

\end{corollary}

\begin{proof}
It suffices to apply the Marcinkiewicz--Zygmund inequality to the independent
random variables
\[
f_{j}\left( \mathbf{x}\right)  =f_{j}\left( x_{1},x_{2},\ldots,x_{N}\right)
=\omega_{j}f\left( x_{j}\right) ,
\]
and observe that
\[
\mathbb{E}\left( f_{j}\right) =\int_{\mathcal{X}_{j}}f\left( x_{j}\right)
dx_{j}%
\]
and
\[
{\displaystyle\sum_{j=1}^{N}} \left( f_{j}-\mathbb{E}\left( f_{j}\right)
\right)  ={\displaystyle\sum_{j=1}^{N}} \left( \omega_{j}f\left( x_{j}\right)
-\int_{\mathcal{X}_{j}}f\left( x_{j}\right) dx_{j}\right)  ={\displaystyle\sum
_{j=1}^{N}} \omega_{j}f\left( x_{j}\right) -\int_{\mathcal{M}}f\left( x\right)
dx.
\]
 \end{proof}

\section{\label{sec GL}Diameter bounded equal measure partition of metric
measure spaces}

In some of the results that follow we shall assume that a metric measure space
$\mathcal{M}$ can be decomposed into a finite disjoint union of sets in the
form $\mathcal{M=X}_{1}\cup\cdots\cup\mathcal{X}_{N}$, with $\omega
_{j}=\left\vert \mathcal{X}_{j}\right\vert \approx N^{-1}$ and $\delta
_{j}=\operatorname*{diam}\left(  \mathcal{X}_{j}\right)  \approx N^{-1/d}$. In
fact, under appropriate assumptions, Gigante and Leopardi in \cite{G-L} proved
the following more precise result.

\begin{theorem}
\label{GigLeo}Let $\mathcal{M}$ be a connected metric measure space with
finite measure with the property that there exist positive constants $d$, $H$
and $K$ such that for every $y\in\mathcal{M}$ and $0<r<\operatorname*{diam}%
(\mathcal{M})$,
\[
Hr^{d} \leqslant\left\vert \left\{  x\in\mathcal{M}:\left\vert x-y\right\vert
< r\right\}  \right\vert \leqslant Kr^{d}.
\]
Then, there exist two constants $c_{1}$ and $c_{2}$, such that for every
sufficiently large $N$ there exists a partition $\mathcal{M}=\mathcal{X}%
_{1}\cup\cdots\cup\mathcal{X}_{N}$ and points $y_{j}\in\mathcal{X}_{j}$ with
$\left|  \mathcal{X}_{j}\right|  = \left|  \mathcal{M} \right|  /N$ and
\[
\left\{  x\in\mathcal{M}:\left\vert x-y_{j}\right\vert < c_{1}N^{-1/d}%
\right\}  \subset\mathcal{X}_{j}\subset\left\{  x\in\mathcal{M}:\left\vert
x-y_{j}\right\vert < c_{2}N^{-1/d} \right\}  .
\]

\end{theorem}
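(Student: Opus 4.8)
The plan is to build the partition by a recursive splitting scheme, and the observation that organizes everything is that the two-sided ball condition is \emph{automatic} once each cell is simultaneously \emph{round} and of \emph{exactly} the right measure. Set $v=\left\vert\mathcal{M}\right\vert/N$; since $\left\vert\mathcal{M}\right\vert$ is a fixed finite constant, $v\approx N^{-1}$. Call a measurable set $\mathcal{R}$ a \emph{round region} with centre $z$ and radius $\rho$ if $\{x:\left\vert x-z\right\vert<\rho\}\subseteq\mathcal{R}\subseteq\{x:\left\vert x-z\right\vert<\Lambda\rho\}$ for a fixed aspect constant $\Lambda$. If $\mathcal{X}_j$ is round with $\left\vert\mathcal{X}_j\right\vert=v$, then the Ahlfors bounds give $H\rho^{d}\leqslant\left\vert\mathcal{X}_j\right\vert\leqslant K\Lambda^{d}\rho^{d}$, forcing $\rho\approx v^{1/d}\approx N^{-1/d}$, which is exactly the asserted inclusion with $c_{1}\approx\rho$ and $c_{2}\approx\Lambda\rho$. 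So the whole problem reduces to partitioning $\mathcal{M}$ into $N$ round regions, of measure exactly $v$ each, with a single uniform aspect constant $\Lambda$ depending only on $d,H,K$.

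The engine is a splitting lemma: there exist $\Lambda$ and $m_{0}$, depending only on $d,H,K$, so that any round region $\mathcal{R}$ with $\left\vert\mathcal{R}\right\vert\geqslant m_{0}v$ can be partitioned into finitely many round regions with the same aspect constant $\Lambda$, each of measure a prescribed integer multiple of $v$ that is strictly smaller than $\left\vert\mathcal{R}\right\vert$. Iterating this lemma, starting from $\mathcal{R}=\mathcal{M}$, produces after finitely many steps a partition into round cells of measure exactly $v$, and the number of cells is $\left\vert\mathcal{M}\right\vert/v=N$ by the bookkeeping of measures. To carve off a piece of a prescribed measure $w<\left\vert\mathcal{R}\right\vert$, I would fix a centre and sweep the concentric balls $B(z,t)\cap\mathcal{R}$, increasing $t$ until the swept measure first reaches $w$: \emph{connectedness} of $\mathcal{M}$ forces the swept measure to run through all intermediate values, while the Ahlfors regularity forces the piece to sit between concentric balls of comparable radii. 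Peeling $\mathcal{R}$ into concentric collars in this way, and cutting each collar into finitely many pieces of comparable measure, is what keeps every descendant region round with a single fixed aspect constant, exactly as in the recursive zonal equal area construction on the sphere that this theorem generalizes.

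The main obstacle, and the technical heart of the argument, is reconciling \emph{exact} equality of measures with the two-sided diameter control across the whole recursion. Two points need care. First, the sweep $t\mapsto\left\vert B(z,t)\cap\mathcal{R}\right\vert$ is only left-continuous and can jump where a sphere $\{\left\vert x-z\right\vert=t\}$ carries positive measure; the Ahlfors bounds do not rule this out, so the intermediate value argument may overshoot the target $w$. I would handle this by perturbing the centre, or by splitting the exceptional sphere itself, using connectedness to move measure continuously, so that a piece of the exact target measure can always be detached. Second, and more seriously, carving a ball out of a region destroys the roundness of the complement, so a naive bisection would produce increasingly elongated cells and ruin the lower inclusion $B(y_{j},c_{1}N^{-1/d})\subseteq\mathcal{X}_{j}$. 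The resolution is the collar bookkeeping: one must choose, at each level, how many collars to cut and how many cells per collar so that every descendant stays within a fixed aspect ratio, while the prescribed measures remain exact integer multiples of $v$. Controlling this combinatorics \emph{uniformly in} $N$, so that $\Lambda$, and hence $c_{1}$ and $c_{2}$, do not degrade as the recursion deepens, is where the real work lies, and it is precisely what \cite{G-L} carries out.

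An alternative I would keep in reserve is to start from a Christ-type dyadic system on the space of homogeneous type $\mathcal{M}$, which already delivers sets with the two-sided ball property and measure comparable to $v$, and then to equalize the measures by transferring small surpluses across shared boundaries using connectedness. This trades the roundness bookkeeping for an exact-transfer problem, but the transfer must still be made exact without destroying the ball inclusions, so the essential difficulty reappears in the same place.
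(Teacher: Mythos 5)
First, a point of comparison: the paper itself offers no proof of Theorem \ref{GigLeo}. It is quoted from Gigante and Leopardi \cite{G-L} (see the sentence introducing it in \S\ref{sec GL}), so there is no internal argument to measure your attempt against; the only question is whether your proposal stands on its own.

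Judged that way, it has a genuine gap, and you in effect concede it. The reduction in your first paragraph is sound: if each cell of measure $\left\vert\mathcal{M}\right\vert/N$ is squeezed between concentric balls of radii $\rho$ and $\Lambda\rho$ with a uniform $\Lambda$, the two-sided Ahlfors bounds force $\rho\approx N^{-1/d}$ and the conclusion follows. But the entire burden then falls on the splitting lemma, and the two difficulties you flag are the content of the theorem, not side issues. (i) The sweep $t\mapsto\left\vert B(z,t)\cap\mathcal{R}\right\vert$ is monotone but need not be continuous: Ahlfors regularity bounds balls, not spheres or thin annuli, so the intermediate value argument can genuinely skip the target measure, and ``perturbing the centre or splitting the exceptional sphere'' is a wish, not a construction -- you must exhibit a measurable set of \emph{exactly} the prescribed measure that still sits between two concentric balls of comparable radii. (ii) More seriously, the assertion that the collar-and-cell bookkeeping can be arranged so that every descendant stays round with a \emph{single} aspect constant $\Lambda$, uniformly in $N$ and in the depth of the recursion, is precisely the uniform-constant statement being proved; writing that this ``is where the real work lies, and it is precisely what \cite{G-L} carries out'' is an acknowledgement that the work is not done here. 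Until the splitting lemma is stated with precise hypotheses and proved with constants depending only on $d$, $H$ and $K$, what you have is an accurate outline of the known strategy (the zonal equal-area construction of \cite{L} and its generalization in \cite{G-L}) rather than a proof.
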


For example, the theorem applies to all compact Riemannian manifolds. 
An algorithmic construction 
in the particular case of the
$2$-dimensional sphere that pays attention to the size of the constant $c_2$
is contained in \cite{Saff} (see also 
\cite{L} for the extension to higher dimensions).

\section{\label{numerical 1} Numerical integration in potential spaces}

In this section we shall study the functional $\mathcal{E}_{\mathbf{x,\omega}%
}$ on the potential space $\mathbb{H}_{p}^{\Phi}\left(  \mathcal{M}\right)  $.
Let
\[
\left\Vert \mathcal{E}_{\mathbf{x,\omega}}\right\Vert _{\Phi,p} =\sup_{f\in\mathbb H^{\Phi}_p(\mathcal M)}\left\{
\frac{\left\vert \mathcal{E}_{\mathbf{x,\omega}}\left(  f\right)  \right\vert
} {\left\Vert f\right\Vert _{\mathbb{H}_{p}^{\Phi}\left(  \mathcal{M}\right)
}} \right\}
\]
be the norm of this functional, also termed \textit{worst case
error}, and the following lemma gives an explicit
formula for it.

\begin{lemma}
\label{Lemma 1} Assume that a measure space $\mathcal{M}$ can be decomposed
into a finite disjoint union of sets in the form $\mathcal{M=X}_{1}\cup
\cdots\cup\mathcal{X}_{N}$, with measure $\left\vert \mathcal{X}%
_{j}\right\vert =\omega_{j}>0$. Assume also that $1\leqslant p\leqslant
+\infty$, $1\leqslant q\leqslant+\infty$, $1/p+1/q=1$, and that for every
$x$,
\[
{\left\{  \int_{\mathcal{M}}\left\vert \Phi\left(  x,y\right)  \right\vert
^{q}dy\right\}  ^{1/q}<+\infty.}%
\]
Finally, assume that
\[
\left\{  \int_{\mathcal{M}}\left(  \int_{\mathcal{M}}\left\vert \Phi\left(
x,y\right)  \right\vert dx\right)  ^{q}dy\right\}  ^{1/q}<+\infty.
\]
Then the functional $\mathcal{E}_{\mathbf{x,\omega}}$ is well defined and
continuous on $H_{p}^{\Phi}\left(  \mathcal{M}\right)  $, and its norm is
\[
\left\Vert \mathcal{E}_{\mathbf{x,\omega}}\right\Vert _{\Phi,p}=\left\{
{\displaystyle\int_{\mathcal{M}}}\left\vert {\displaystyle\sum_{j=1}^{N}%
}{\displaystyle\int_{\mathcal{X}_{j}}}\left(  \Phi(x_{j},y)-\Phi(x,y)\right)
dx\right\vert ^{q}dy\right\}  ^{1/q}.
\]

\end{lemma}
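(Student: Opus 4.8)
The plan is to compute the worst case error by dualizing the potential norm and then identifying the resulting dual functional explicitly. First I would write out the error functional applied to a potential. If $f(x)=\int_{\mathcal{M}}\Phi(x,y)g(y)\,dy$ with $g\in\mathbb{L}^{p}(\mathcal{M})$, then by linearity and Fubini,
\[
\mathcal{E}_{\mathbf{x,\omega}}(f)=\sum_{j=1}^{N}\omega_{j}f(x_{j})-\int_{\mathcal{M}}f(x)\,dx
=\int_{\mathcal{M}}\left(\sum_{j=1}^{N}\omega_{j}\Phi(x_{j},y)-\int_{\mathcal{M}}\Phi(x,y)\,dx\right)g(y)\,dy.
\]
Using $\omega_{j}=\int_{\mathcal{X}_{j}}dx$ and $\mathcal{M}=\bigcup_{j}\mathcal{X}_{j}$, the kernel in parentheses rewrites as $\sum_{j}\int_{\mathcal{X}_{j}}\bigl(\Phi(x_{j},y)-\Phi(x,y)\bigr)dx$; call this function $K(y)$. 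The two finiteness hypotheses are exactly what is needed to justify interchanging sum and integral and to guarantee that $K\in\mathbb{L}^{q}(\mathcal{M})$: the first bound controls each $\Phi(x_{j},\cdot)$ in $\mathbb{L}^{q}$, and the second controls $y\mapsto\int_{\mathcal{M}}|\Phi(x,y)|\,dx$ in $\mathbb{L}^{q}$, so $K$ is a finite $\mathbb{L}^{q}$-combination of $\mathbb{L}^{q}$ functions.

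Next I would pass to the norm. Since $\mathcal{E}_{\mathbf{x,\omega}}(f)=\int_{\mathcal{M}}K(y)g(y)\,dy$ depends on $f$ only through the pairing with $g$, and since $\|f\|_{\mathbb{H}_{p}^{\Phi}}=\inf\{\|g\|_{\mathbb{L}^{p}}\}$ over all $g$ producing $f$, the worst case error becomes
\[
\left\Vert \mathcal{E}_{\mathbf{x,\omega}}\right\Vert_{\Phi,p}
=\sup_{f\neq0}\frac{|\mathcal{E}_{\mathbf{x,\omega}}(f)|}{\|f\|_{\mathbb{H}_{p}^{\Phi}}}
=\sup_{g}\frac{\left|\int_{\mathcal{M}}K(y)g(y)\,dy\right|}{\|g\|_{\mathbb{L}^{p}}},
\]
where the last supremum is over all $g\in\mathbb{L}^{p}(\mathcal{M})$. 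By the standard duality $(\mathbb{L}^{p})^{*}=\mathbb{L}^{q}$, this supremum equals $\|K\|_{\mathbb{L}^{q}(\mathcal{M})}$, which is precisely the claimed formula. The one point requiring care is that two different $g$'s can give the same potential $f$, so the map $g\mapsto f$ is not injective; I would argue that this does not change the supremum, since replacing the infimum-over-$g$ definition of the norm by a sup-over-$g$ of the ratio is legitimate because the linear functional $g\mapsto\int K g$ only sees $g$ through its pairing with $K$, and $K$ annihilates exactly the difference of any two representations of the same potential.

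The main obstacle is precisely this last well-definedness issue: one must confirm that the functional $\mathcal{E}_{\mathbf{x,\omega}}$ descends to a well-defined bounded functional on the quotient space $\mathbb{H}_{p}^{\Phi}$, i.e. that if $g_{1}$ and $g_{2}$ produce the same potential then $\int K g_{1}=\int K g_{2}$. Since $f(x)=\int\Phi(x,y)g(y)\,dy$ is defined pointwise everywhere under the hypotheses (this is the whole point of Definition \ref{def:potentials}), the value $\mathcal{E}_{\mathbf{x,\omega}}(f)$ depends only on $f$ through its pointwise values at the nodes and its integral, hence only on $f$ and not on the chosen $g$; this makes the functional manifestly well defined, and the duality step then gives both continuity (boundedness by $\|K\|_{\mathbb{L}^{q}}<+\infty$) and the exact value of the norm. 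The cases $p=1,q=+\infty$ and $p=+\infty,q=1$ need the usual separate treatment of the $\mathbb{L}^{p}$–$\mathbb{L}^{q}$ duality, but the argument is otherwise uniform.
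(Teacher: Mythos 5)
Your argument is correct and follows essentially the same route as the paper: both reduce $\mathcal{E}_{\mathbf{x,\omega}}(f)$ to the pairing $\int_{\mathcal{M}}F(y)g(y)\,dy$ with $F(y)=\sum_{j=1}^{N}\int_{\mathcal{X}_{j}}\left(\Phi(x_{j},y)-\Phi(x,y)\right)dx$, note that this value depends only on $f$ and not on the representing $g$, and then identify the norm as $\Vert F\Vert_{\mathbb{L}^{q}(\mathcal{M})}$. The only difference is that you invoke the $\mathbb{L}^{p}$--$\mathbb{L}^{q}$ duality as a black box, whereas the paper carries it out by hand, using H\"older's inequality for the upper bound and the explicit extremizer $g=\overline{F}\,\vert F\vert^{q/p-1}$ for the lower bound.
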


\begin{proof}
For simplicity assume that $q<+\infty$, the case $q=+\infty$ being similar.
Let
\[
f\left( x\right) = {\displaystyle\int_{\mathcal{M}}} \Phi\left( x,y\right)
g\left( y\right) dy
\]
be the potential of a function $g(x)$ in $\mathbb{L}^{p}\left( \mathcal{M}%
\right) $. Since
\[
{\displaystyle\int_{\mathcal{M}}} \left\vert \Phi\left( x,y\right) \right\vert
^{q}dy<+\infty,
\]
$f\left( x\right) $ is pointwise well defined. Since
\[
\int_{\mathcal{M}} \left( \int_{\mathcal{M}}\left\vert \Phi\left( x,y\right)
\right\vert dx\right) ^{q}dy <+\infty,
\]
it follows from Fubini's theorem that $f\left( x\right) $ is integrable, and
\[
{\displaystyle\int_{\mathcal{M}}} f\left( x\right) dx= {\displaystyle\int%
_{\mathcal{M}}} {\displaystyle\int_{\mathcal{M}}} \Phi\left( x,y\right)
g\left( y\right) dy\,dx= {\displaystyle\int_{\mathcal{M}}} g\left( y\right)
{\displaystyle\int_{\mathcal{M}}} \Phi\left( x,y\right) dx\,dy.
\]
This implies that $\mathcal{E}_{\mathbf{x,\omega}}\left( f\right) $ is well
defined. Moreover,
\begin{align*}
\left\vert \mathcal{E}_{\mathbf{x,\omega}}\left( f\right) \right\vert  &
=\left\vert {\displaystyle\sum_{j=1}^{N}} \omega_{j}f\left( x_{j}\right) -
{\displaystyle\int_{\mathcal{M}}} f\left( x\right) dx\right\vert \\
&  =\left\vert {\displaystyle\sum_{j=1}^{N}} \omega_{j} {\displaystyle\int%
_{\mathcal{M}}} \Phi\left( x_{j},y\right) g\left( y\right) dy-
{\displaystyle\int_{\mathcal{M}}} g\left( y\right)  {\displaystyle\int%
_{\mathcal{M}}} \Phi\left( x,y\right) dx\,dy\right\vert \\
&  =\left\vert {\displaystyle\int_{\mathcal{M}}} \left(  {\displaystyle\sum
_{j=1}^{N}} \int_{\mathcal{X}_{j}}\left( \Phi\left( x_{j},y\right)
-\Phi\left(  x,y\right) \right) dx\right) g\left( y\right) dy\right\vert \\
&  \leqslant\left\{  {\displaystyle\int_{\mathcal{M}}} \left\vert
g(y)\right\vert ^{p}dy\right\} ^{1/p}\left\{  {\displaystyle\int_{\mathcal{M}%
}} \left\vert {\displaystyle\sum_{j=1}^{N}} \int_{\mathcal{X}_{j}}\left(
\Phi\left( x_{j},y\right)  -\Phi\left( x,y\right) \right) dx\right\vert
^{q}dy\right\} ^{1/q}.
\end{align*}
Taking the infimum as $g\left( y\right) $ varies among all possible functions
in $\mathbb{L}^{p}\left( \mathcal{M}\right) $ with potential $f\left( x\right)
$, one obtains
\[
\left\Vert \mathcal{E}_{\mathbf{x,\omega}}\right\Vert _{\Phi,p}\leqslant
\left\{  {\displaystyle\int_{\mathcal{M}}} \left\vert {\displaystyle\sum
_{j=1}^{N}} \int_{\mathcal{X}_{j}}\left( \Phi\left( x_{j},y\right)
-\Phi\left( x,y\right) \right) dx\right\vert ^{q}dy\right\} ^{1/q}.
\]
Conversely, using the standard argument for $L^p-L^q$ duality, set
\[
F\left( y\right) = {\displaystyle\sum_{j=1}^{N}} \int_{\mathcal{X}_{j}}\left(
\Phi\left( x_{j},y\right)  -\Phi\left( x,y\right) \right) dx,
\]
\[
g\left( y\right) =\left\{
\begin{array}
[c]{ll}%
\overline{F\left( y\right) }\left\vert F\left( y\right) \right\vert ^{q/p-1} &
\text{if $F\left( y\right) \neq0$},\\
0 & \text{if $F\left( y\right) =0$},
\end{array}
\right.
\]
and
\[
f\left( x\right) = {\displaystyle\int_{\mathcal{M}}} \Phi\left( x,y\right)
g\left( y\right) dy.
\]
Then
\begin{align*}
\left\vert \mathcal{E}_{\mathbf{x,\omega}}\left( f\right) \right\vert  &
=\left\vert {\displaystyle\sum_{j=1}^{N}} \omega_{j}f\left( x_{j}\right) -
{\displaystyle\int_{\mathcal{M}}} f(x)dx\right\vert =\left\vert
{\displaystyle\int_{\mathcal{M}}} F\left( y\right) g\left( y\right)
dy\right\vert \\
&  ={\displaystyle\int_{\mathcal{M}}} \left\vert F\left( y\right) \right\vert
^{1+q/p}dy =\left\{  {\displaystyle\int_{\mathcal{M}}} \left\vert F\left(
y\right) \right\vert ^{q}dy\right\} ^{1/q} \left\{  {\displaystyle\int%
_{\mathcal{M}}} \left\vert F\left( y\right) \right\vert ^{q}dy\right\}
^{1/p}\\
&  =\left\{  {\displaystyle\int_{\mathcal{M}}} \left\vert F\left( y\right)
\right\vert ^{q}dy\right\} ^{1/q} \left\{  {\displaystyle\int_{\mathcal{M}}}
\left\vert g\left( y\right) \right\vert ^{p}dy\right\} ^{1/p} \geqslant
\left\{  {\displaystyle\int_{\mathcal{M}}} \left\vert F\left( y\right)
\right\vert ^{q}dy\right\} ^{1/q} \left\Vert f\right\Vert _{\mathbb{H}%
_{p}^{\Phi}}.
\end{align*}
This implies that
\[
\left\Vert \mathcal{E}_{\mathbf{x,\omega}}\right\Vert _{\Phi,p} \geqslant
\left\{  {\displaystyle\int_{\mathcal{M}}} \left\vert F\left( y\right)
\right\vert ^{q}dy\right\} ^{1/q}.
\]
 \end{proof}

\begin{theorem}
\label{Theorem 3} Assume that a measure space $\mathcal{M}$ can be decomposed
into a finite disjoint union of sets in the form $\mathcal{M=X}_{1}\cup
\cdots\cup\mathcal{X}_{N}$, with measure $\left\vert \mathcal{X}%
_{j}\right\vert =\omega_{j}>0$. Assume also that $1\leqslant p\leqslant
+\infty$, $1\leqslant q\leqslant+\infty$, $1/p+1/q=1$, and that for every
$x$,
\[
\left\{  \int_{\mathcal{M}}\left\vert \Phi\left(  x,y\right)  \right\vert
^{q}dy\right\}  ^{1/q}<+\infty.
\]
Finally, assume that
\[
\left\{  \int_{\mathcal{M}}\left(  \int_{\mathcal{M}}\left\vert \Phi\left(
x,y\right)  \right\vert dx\right)  ^{q}dy\right\}  ^{1/q}<+\infty.
\]
Define
\[
\Gamma\left(  \Phi\right)  ={\sum_{j=1}^{N}}\left\{  {\int_{\mathcal{X}_{j}%
}\int_{\mathcal{M}}}\left\vert {\int_{\mathcal{X}_{j}}}\left(  \Phi
(x_{j},y)-\Phi(z_{j},y)\right)  dz_{j}\right\vert ^{q}dy\frac{dx_{j}}%
{\omega_{j}}\right\}  ^{1/q},
\]
and
\[
\Delta\left(  \Phi\right)  =\left\{  {\int_{\mathbf{X}}\int_{\mathcal{M}}%
}\left(  {\sum_{j=1}^{N}}\left\vert {\int_{\mathcal{X}_{j}}}\left(  \Phi
(x_{j},y)-\Phi(z_{j},y)\right)  dz_{j}\right\vert ^{2}\right)  ^{q/2}%
dy\,d\mathbf{x}\right\}  ^{1/q}.
\]
Then for every $1\leqslant p\leqslant+\infty$,
\begin{equation}
\left\{  {\int_{\mathbf{X}}}\left\Vert \mathcal{E}_{\mathbf{x},\mathbf{\omega
}}\right\Vert _{\Phi,p}^{q}d\mathbf{x}\right\}  ^{1/q}\leqslant\Gamma\left(
\Phi\right)  ,\label{Est1}%
\end{equation}
and for every $1<p\leqslant+\infty$,
\begin{equation}
A\left(  q\right)  \Delta\left(  \Phi\right)  \leqslant\left\{  {\int%
_{\mathbf{X}}}\left\Vert \mathcal{E}_{\mathbf{x},\mathbf{\omega}}\right\Vert
_{\Phi,p}^{q}d\mathbf{x}\right\}  ^{1/q}\leqslant B\left(  q\right)
\Delta\left(  \Phi\right)  .\label{Est2}%
\end{equation}
In particular, there exist choices of nodes $\left\{  x_{j}\right\}  $ with
the property that for every function $f\left(  x\right)  $ in the potential
space $\mathbb{H}_{p}^{\Phi}\left(  \mathcal{M}\right)  $,
\[
\left\vert {\sum_{j=1}^{N}}\omega_{j}f\left(  x_{j}\right)  -{\int%
_{\mathcal{M}}}f(x)dx\right\vert \leqslant\left\{
\begin{array}
[c]{ll}%
\Gamma\left(  \Phi\right)  \left\Vert f\right\Vert _{\mathbb{H}_{p}^{\Phi
}\left(  \mathcal{M}\right)  } & \text{for every $1\leqslant p\leqslant
+\infty$},\\
B\left(  q\right)  \Delta\left(  \Phi\right)  \left\Vert f\right\Vert
_{\mathbb{H}_{p}^{\Phi}\left(  \mathcal{M}\right)  } & \text{for every
$1<p\leqslant+\infty$}.
\end{array}
\right.
\]

\end{theorem}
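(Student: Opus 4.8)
The plan is to start from the explicit worst-case-error formula of Lemma~\ref{Lemma 1} and to reduce everything to statements about the single-block functions
\[
F_{j}(x_{j},y)=\int_{\mathcal{X}_{j}}\left(\Phi(x_{j},y)-\Phi(z_{j},y)\right)dz_{j}.
\]
Relabelling the dummy variable $x$ in the $j$-th summand of Lemma~\ref{Lemma 1} as $z_{j}$, that lemma reads
\[
\left\Vert \mathcal{E}_{\mathbf{x},\boldsymbol{\omega}}\right\Vert_{\Phi,p}^{q}=\int_{\mathcal{M}}\left\vert \sum_{j=1}^{N}F_{j}(x_{j},y)\right\vert^{q}dy,
\]
so the quantity to be estimated is $\int_{\mathbf{X}}\int_{\mathcal{M}}\left\vert \sum_{j}F_{j}(x_{j},y)\right\vert^{q}dy\,d\mathbf{x}$. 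Two features of $F_{j}$ will drive the proof: it depends on $\mathbf{x}$ only through the single coordinate $x_{j}$, and it is centred, since by Fubini and the antisymmetry of the integrand under $x_{j}\leftrightarrow z_{j}$ one has $\int_{\mathcal{X}_{j}}F_{j}(x_{j},y)\,dx_{j}=0$ for every $y$.

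To prove (\ref{Est1}) I would apply the triangle inequality in $L^{q}(\mathbf{X}\times\mathcal{M})$ (legitimate since $q\geqslant1$), obtaining
\[
\left\{\int_{\mathbf{X}}\int_{\mathcal{M}}\left\vert\sum_{j}F_{j}(x_{j},y)\right\vert^{q}dy\,d\mathbf{x}\right\}^{1/q}\leqslant\sum_{j=1}^{N}\left\{\int_{\mathbf{X}}\int_{\mathcal{M}}\left\vert F_{j}(x_{j},y)\right\vert^{q}dy\,d\mathbf{x}\right\}^{1/q}.
\]
Since $F_{j}$ depends only on $x_{j}$ and $d\mathbf{x}=\frac{dx_{1}}{\omega_{1}}\times\cdots\times\frac{dx_{N}}{\omega_{N}}$ is a product probability measure, integrating out the coordinates $x_{k}$ with $k\neq j$ contributes a factor $1$, so each summand reduces to $\{\int_{\mathcal{X}_{j}}\int_{\mathcal{M}}|F_{j}(x_{j},y)|^{q}dy\,\frac{dx_{j}}{\omega_{j}}\}^{1/q}$. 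Summing these is exactly $\Gamma(\Phi)$, which yields (\ref{Est1}) for every $1\leqslant p\leqslant+\infty$.

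For (\ref{Est2}) I would fix $y$ and view $x_{j}\mapsto F_{j}(x_{j},y)$ as independent random variables on the probability space $(\mathbf{X},d\mathbf{x})$; independence follows from the product structure of $d\mathbf{x}$ together with the fact that distinct $F_{j}$ depend on distinct coordinates, and we have already observed that $\mathbb{E}(F_{j}(\cdot,y))=0$. Applying the Marcinkiewicz--Zygmund inequality with exponent $q$ (permissible for $1\leqslant q<+\infty$, i.e.\ for $1<p\leqslant+\infty$) to these centred variables gives, for each fixed $y$,
\[
A(q)^{q}\,\mathbb{E}\!\left(\sum_{j}|F_{j}|^{2}\right)^{q/2}\leqslant\mathbb{E}\!\left(\left\vert\sum_{j}F_{j}\right\vert^{q}\right)\leqslant B(q)^{q}\,\mathbb{E}\!\left(\sum_{j}|F_{j}|^{2}\right)^{q/2}.
\]
Integrating this chain in $y$ and interchanging the $y$- and $\mathbf{x}$-integrations by Fubini, the middle term becomes $\int_{\mathbf{X}}\|\mathcal{E}_{\mathbf{x},\boldsymbol{\omega}}\|_{\Phi,p}^{q}\,d\mathbf{x}$ while the outer terms become $\Delta(\Phi)^{q}$; taking $q$-th roots yields (\ref{Est2}).

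Finally, the \emph{in particular} assertion follows by averaging. Since $d\mathbf{x}$ is a probability measure, (\ref{Est1}) shows the average of $\|\mathcal{E}_{\mathbf{x},\boldsymbol{\omega}}\|_{\Phi,p}^{q}$ is at most $\Gamma(\Phi)^{q}$, so some choice of nodes $\mathbf{x}$ satisfies $\|\mathcal{E}_{\mathbf{x},\boldsymbol{\omega}}\|_{\Phi,p}\leqslant\Gamma(\Phi)$, and for $1<p\leqslant+\infty$ the upper bound in (\ref{Est2}) gives in the same way a choice with $\|\mathcal{E}_{\mathbf{x},\boldsymbol{\omega}}\|_{\Phi,p}\leqslant B(q)\Delta(\Phi)$; indeed, when both estimates are available one may take a node realizing $\min\{\Gamma(\Phi),\,B(q)\Delta(\Phi)\}$. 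For such nodes the definition of the operator norm $\|\mathcal{E}_{\mathbf{x},\boldsymbol{\omega}}\|_{\Phi,p}$ immediately gives the stated pointwise bound on $|\mathcal{E}_{\mathbf{x},\boldsymbol{\omega}}(f)|$ for every $f\in\mathbb{H}_{p}^{\Phi}(\mathcal{M})$. I expect the only genuinely delicate points to be bookkeeping ones: confirming the centring identity $\int_{\mathcal{X}_{j}}F_{j}\,dx_{j}=0$ via the antisymmetrization in $(x_{j},z_{j})$, and checking that the integrability hypotheses on $\Phi$ justify each use of Fubini. Once the variables $F_{j}(\cdot,y)$ are recognized as independent and centred, the probabilistic core is an essentially immediate application of Marcinkiewicz--Zygmund.
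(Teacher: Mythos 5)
Your proposal is correct and follows essentially the same route as the paper: Lemma~\ref{Lemma 1} to reduce to the block functions, Minkowski's inequality in $L^{q}$ for (\ref{Est1}), and the Marcinkiewicz--Zygmund inequality applied for fixed $y$ to the independent centred variables $F_{j}(\cdot,y)$ followed by Fubini for (\ref{Est2}) — the paper packages this last step as Corollary~\ref{Cor- MZ}, which is exactly your direct application of MZ. The averaging argument for the ``in particular'' clause is also the intended one.
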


The constants $A\left(  q\right)  $ and $B\left(  q\right)  $ are the best
constants in the Marcinkiewicz--Zygmund inequality. The constants
$\Gamma\left(  \Phi\right)  $ and $\Delta\left(  \Phi\right)  $ are related to
the smoothness of the kernel $\Phi\left(  x,y\right)  $. These last constants
could be estimated in terms of Sobolev norms. However, in the applications,
the estimates in terms of Sobolev norms are not always optimal, and it is more
convenient to keep the above complicated expressions. Finally, since $B\left(
q\right)  \rightarrow+\infty$ as $p\rightarrow1+$, the interest of the
estimate (\ref{Est1}) is when $p\rightarrow1+$.

\begin{proof}
By Lemma \ref{Lemma 1} and the triangle inequality,
\begin{align*}
\left\{  {\int_{\mathbf{X}}}\left\Vert \mathcal{E}_{\mathbf{x},\mathbf{\omega
}}\right\Vert _{\Phi,p}^{q}d\mathbf{x}\right\}  ^{1/q} &  =\left\{
{\int_{\mathbf{X}}\int_{\mathcal{M}}}\left\vert {\sum_{j=1}^{N}}%
{\int_{\mathcal{X}_{j}}}\left(  \Phi(x_{j},y)-\Phi(z_{j},y)\right)
dz_{j}\right\vert ^{q}\,dy\,d\mathbf{x}\right\}  ^{1/q}\\
&  \leqslant{\sum_{j=1}^{N}}\left\{  {\int_{\mathcal{X}_{j}}\int_{\mathcal{M}%
}}\left\vert {\int_{\mathcal{X}_{j}}}\left(  \Phi(x_{j},y)-\Phi(z_{j}%
,y)\right)  dz_{j}\right\vert ^{q}dy\,\dfrac{dx_{j}}{\omega_{j}}\right\}
^{1/q},
\end{align*}
where we have used the fact that for a function depending only on $x_j$,
integration on $\mathbf X$ coincides with integration on $\mathcal X_j$.
This gives the proof with $\Gamma\left(  \Phi\right)  $. The proof with
$\Delta\left(  \Phi\right)  $ is similar, with the crucial difference that we replace the triangle inequality
with the Marcinkiewicz-Zygmund inequality. Indeed, by Corollary \ref{Cor-MZ},
\begin{multline*}
\left\{  {\int_{\mathbf{X}}}\left\Vert \mathcal{E}_{\mathbf{x},\mathbf{\omega
}}\right\Vert _{\Phi,p}^{q}d\mathbf{x}\right\}  ^{1/q}   =\left\{
{\int_{\mathcal{M}}\int_{\mathbf{X}}}\left\vert {\sum_{j=1}^{N}\int%
_{\mathcal{X}_{j}}}\left(  \Phi(x_{j},y)-\Phi(z_{j},y)\right)  dz_{j}%
\right\vert ^{q}d\mathbf{x}\,dy\right\}  ^{1/q}\\
\leqslant B\left(  q\right)  \left\{  {\int_{\mathcal{M}}\int_{\mathbf{X}}%
}\left(  {\sum_{j=1}^{N}}\left\vert {\int_{\mathcal{X}_{j}}}\left(  \Phi
(x_{j},y)-\Phi(z_{j},y)\right)  dz_{j}\right\vert ^{2}\right)  ^{q/2}%
d\mathbf{x}\,dy\right\}  ^{1/q}.
\end{multline*}
The proof for the lower bound is similar.
 \end{proof}




The following corollary is a slightly generalized version of Theorem I in the Introduction.

\begin{corollary}
\label{Corollary 3} Let $\mathcal{M}$ be a metric measure space with the
property that there exist $d$ and $c$ such that for every $y\in\mathcal{M}$
and $r>0$,
\[
\left\vert \left\{  x\in\mathcal{M}:\left\vert x-y\right\vert \leqslant
r\right\}  \right\vert \leqslant cr^{d}.
\]
Assume also that $\mathcal{M}$ can be decomposed into a finite disjoint union
of sets in the form $\mathcal{M=X}_{1}\cup\cdots\cup\mathcal{X}_{N}$, with
$\omega_{j}=\left\vert \mathcal{X}_{j}\right\vert \approx N^{-1}$ and
$\delta_{j}=\operatorname*{diam}\left(  \mathcal{X}_{j}\right)  \approx
N^{-1/d}$. Assume that for some $\varepsilon>0$ and $0<\alpha<d$,
\[
\left\vert \Phi\left(  x,y\right)  \right\vert \leqslant c\left\vert
x-y\right\vert ^{\alpha-d}%
\]
for every $x$ and $y$, and
\[
\left\vert \Phi\left(  x,y\right)  -\Phi\left(  z,y\right)  \right\vert
\leqslant c\left\vert x-z\right\vert ^{\varepsilon} \left\vert x-y\right\vert
^{\alpha-d-\varepsilon}%
\]
if $\left\vert x-y\right\vert \geqslant2\left\vert x-z\right\vert $. Finally,
assume that $1<p\leqslant+\infty$, $1/p+1/q=1$ and $d/p<\alpha<d$. Then
\[
\left\{  {\displaystyle\int_{\mathbf{X}}} \left\Vert \mathcal{E}%
_{\mathbf{x},\mathbf{\omega}}\right\Vert_{\Phi,p} ^{q} d\mathbf{x}\right\}  ^{1/q}
\leqslant\left\{
\begin{array}
[c]{ll}%
cN^{-\alpha/d} & \text{if $\alpha<d/2+\varepsilon$},\\
cN^{-1/2-\varepsilon/d}\left(  \log N\right)  ^{1/2} & \text{if $\alpha
=d/2+\varepsilon$},\\
cN^{-1/2-\varepsilon/d} & \text{if $\alpha>d/2+\varepsilon$}.
\end{array}
\right.
\]

\end{corollary}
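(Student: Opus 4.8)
The plan is to invoke Theorem \ref{Theorem 3}, and in particular the upper bound in (\ref{Est2}), which is available since $1<p\leqslant+\infty$. This reduces the problem to estimating the single quantity $\Delta(\Phi)$, because $\{\int_{\mathbf{X}}\|\mathcal{E}_{\mathbf{x},\boldsymbol{\omega}}\|_{\Phi,p}^{q}\,d\mathbf{x}\}^{1/q}\leqslant B(q)\Delta(\Phi)$ and the constant $B(q)$ depends only on $q$, hence on $p$, and may be absorbed. Writing $F_{j}(x_{j},y)=\int_{\mathcal{X}_{j}}(\Phi(x_{j},y)-\Phi(z_{j},y))\,dz_{j}$, the integrand of $\Delta(\Phi)^{q}$ is $(\sum_{j}|F_{j}(x_{j},y)|^{2})^{q/2}$, and the whole strategy is to obtain, for each fixed $y$, a bound on $\sum_{j}|F_{j}(x_{j},y)|^{2}$ that is essentially uniform in $\mathbf{x}$, so that the integrations in $\mathbf{x}$ (a probability measure) and in $y$ (over a finite measure space) become harmless.

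For fixed $y$ I would split the indices into a far family $J_{\mathrm{far}}(y)=\{j:\operatorname{dist}(y,\mathcal{X}_{j})>3\delta_{j}\}$ and a near family $J_{\mathrm{near}}(y)$. Since each $\mathcal{X}_{j}$ has measure $\approx N^{-1}$ and diameter $\approx N^{-1/d}$, the upper measure bound $|\{|x-y|\leqslant r\}|\leqslant cr^{d}$ forces $J_{\mathrm{near}}(y)$ to contain only $O(1)$ indices. On the far family the hypothesis $|x_{j}-y|\geqslant 2|x_{j}-z_{j}|$ holds for all $z_{j}\in\mathcal{X}_{j}$, so the H\"older-type smoothness bound on $\Phi$ gives $|F_{j}(x_{j},y)|\leqslant c\,\delta_{j}^{\varepsilon}\omega_{j}\operatorname{dist}(y,\mathcal{X}_{j})^{\alpha-d-\varepsilon}$ uniformly in $x_{j}\in\mathcal{X}_{j}$. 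Grouping the far indices into dyadic shells $\operatorname{dist}(y,\mathcal{X}_{j})\approx 2^{k}N^{-1/d}$, the same measure bound shows each shell contains $\lesssim 2^{kd}$ indices, whence
\[
\sum_{j\in J_{\mathrm{far}}(y)}|F_{j}(x_{j},y)|^{2}\leqslant c\,N^{-2\varepsilon/d}N^{-2}\,N^{-2(\alpha-d-\varepsilon)/d}\sum_{k\geqslant 1}2^{k(2\alpha-d-2\varepsilon)}.
\]
The geometric series $\sum_{k}2^{k(2\alpha-d-2\varepsilon)}$, truncated at $2^{k}\approx N^{1/d}$, is bounded, of size $\log N$, or of size $N^{(2\alpha-d-2\varepsilon)/d}$ according as $2\alpha-d-2\varepsilon$ is negative, zero, or positive; this trichotomy is exactly the source of the three cases and of the threshold $\alpha=d/2+\varepsilon$. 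Since the prefactor simplifies to $N^{-2\alpha/d}$, the bookkeeping yields $(\sum_{J_{\mathrm{far}}}|F_{j}|^{2})^{1/2}$ dominated by the right-hand side of the Corollary, uniformly in $\mathbf{x}$ and $y$, so integrating this uniform bound costs nothing further.

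It remains to control the near family. Since $|J_{\mathrm{near}}(y)|=O(1)$, one has $(\sum_{J_{\mathrm{near}}}|F_{j}|^{2})^{q/2}\leqslant c\sum_{J_{\mathrm{near}}}|F_{j}|^{q}$, and after integrating in $\mathbf{x}$ (only $x_{j}$ survives in the $j$-th term) and in $y$ over the neighborhood $\mathcal{N}_{j}=\{y:\operatorname{dist}(y,\mathcal{X}_{j})\lesssim N^{-1/d}\}$, of measure $\lesssim N^{-1}$, the task is to estimate $\sum_{j}\int_{\mathcal{N}_{j}}\int_{\mathcal{X}_{j}}|F_{j}(x_{j},y)|^{q}\,\omega_{j}^{-1}dx_{j}\,dy$. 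Here I would use only the size bound $|\Phi(x,y)|\leqslant c|x-y|^{\alpha-d}$, giving $|F_{j}(x_{j},y)|\leqslant c\omega_{j}|x_{j}-y|^{\alpha-d}+c\int_{\mathcal{X}_{j}}|z_{j}-y|^{\alpha-d}dz_{j}$. The integral $\int_{\mathcal{N}_{j}}|x_{j}-y|^{(\alpha-d)q}dy$ converges at the singularity precisely because $(\alpha-d)q+d>0$, i.e. $\alpha>d/p$, which is assumed, and equals $\approx N^{-[(\alpha-d)q+d]/d}$; summing the resulting $\approx N^{-1-q\alpha/d}$ over the $N$ indices gives a total $\lesssim N^{-q\alpha/d}$, that is, a contribution $\lesssim N^{-\alpha/d}$ to $\Delta(\Phi)$, and the second term is handled by the same, simpler, computation (needing only $\alpha>0$). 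Since $N^{-\alpha/d}\leqslant N^{-1/2-\varepsilon/d}$ whenever $\alpha\geqslant d/2+\varepsilon$, the near family is always dominated by the far family, and combining the two yields the stated bound.

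The main obstacle is the far-family estimate: getting the dyadic shell count right and, above all, checking that the resulting bound on $\sum_{J_{\mathrm{far}}}|F_{j}|^{2}$ is genuinely independent of $\mathbf{x}$ and of $y$, so that the outer integrations cost nothing. This is where the exponent $2\alpha-d-2\varepsilon$ governing the shell sum appears and dictates the threshold $d/2+\varepsilon$; the role of the standing assumption $\alpha>d/p$ is confined to the near family, where it guarantees the local integrability of $|x_{j}-y|^{(\alpha-d)q}$.
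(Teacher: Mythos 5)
Your proposal is correct and follows essentially the same route as the paper: reduce to $\Delta(\Phi)$ via the upper bound in (\ref{Est2}), split the indices for each fixed $y$ into the $O(1)$ near sets (handled with the size bound $|\Phi(x,y)|\leqslant c|x-y|^{\alpha-d}$ and the integrability condition $\alpha>d/p$, yielding $N^{-\alpha/d}$) and the far sets (handled with the H\"older-type bound, yielding the three cases from the exponent $2\alpha-d-2\varepsilon$). The only cosmetic difference is that you count indices in dyadic shells where the paper compares the far sum with the integral $\int_{\{|x-y|>cN^{-1/d}\}}|x-y|^{2\alpha-2d-2\varepsilon}\,dx$; these are the same estimate.
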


\begin{proof}
The assumption $\alpha>d/p$ ensures that the kernel $\Phi\left( x,y\right) $
is $q$ integrable and satisfies the hypotheses of Theorem \ref{Theorem 3}. It
then suffices to estimate
\[
\Delta\left( \Phi\right) =\left\{  {\displaystyle\int_{\mathcal{M}}}
{\displaystyle\int_{\mathbf{X}}} \left(  {\displaystyle\sum_{j=1}^{N}}
\left\vert {\displaystyle\int_{\mathcal{X}_{j}}} \left( \Phi(x_{j}%
,y)-\Phi(z_{j},y)\right) dz_{j}\right\vert ^{2}\right) ^{q/2} d\mathbf{x}\,dy%
\right\} ^{1/q}.
\]
If $\operatorname*{dist}\left( y,\mathcal{X}_{j}\right) \leqslant2\delta_{j}$,
then for every $x_{j}$ in $\mathcal{X}_{j}$,
\begin{align*}
\left\vert {\displaystyle\int_{\mathcal{X}_{j}}} \left( \Phi(x_{j}%
,y)-\Phi(z_{j},y)\right) dz_{j}\right\vert  &  \leqslant c {\displaystyle\int%
_{\mathcal{X}_{j}}} \left( \left\vert x_{j}-y\right\vert ^{\alpha-d}
+\left\vert z_{j}-y\right\vert ^{\alpha-d}\right) dz_{j}\\
&  \leqslant c\omega_{j}\left\vert x_{j}-y\right\vert ^{\alpha-d} +c\delta
_{j}^{\alpha} \leqslant cN^{-1}\left\vert x_{j}-y\right\vert ^{\alpha-d}.
\end{align*}
If $\operatorname*{dist}\left( y,\mathcal{X}_{j}\right) \geqslant2\delta_{j}$,
then
\begin{align*}
\left\vert {\displaystyle\int_{\mathcal{X}_{j}}} \left( \Phi(x_{j}%
,y)-\Phi(z_{j},y)\right) dz_{j}\right\vert  &  \leqslant c {\displaystyle\int%
_{\mathcal{X}_{j}}} \left\vert x_{j}-z_{j}\right\vert ^{\varepsilon}
\left\vert x_{j}-y\right\vert ^{\alpha-d-\varepsilon}dz_{j}\\
&  \leqslant c\delta_{j}^{\varepsilon}\omega_{j} \left\vert x_{j}-y\right\vert
^{\alpha-d-\varepsilon} \leqslant cN^{-1-\varepsilon/d} \left\vert
x_{j}-y\right\vert ^{\alpha-d-\varepsilon}.
\end{align*}
Hence
\begin{align*}
&  \left\{  {\displaystyle\int_{\mathcal{M}}} {\displaystyle\int_{\mathbf{X}}}
\left( \sum_{j=1}^{N}\left\vert \int_{\mathcal{X}_{j}} \left( \Phi
(x_{j},y)-\Phi(z_{j},y)\right) dz_{j}\right\vert ^{2}\right) ^{q/2}
d\mathbf{x}\,dy\right\} ^{1/q}\\
& \quad\leqslant c\left\{  {\displaystyle\int_{\mathcal{M}}}
{\displaystyle\int_{\mathbf{X}}} \left( N^{-2} {\displaystyle\sum
_{j:\operatorname*{dist}\left( y,\mathcal{X}_{j}\right)  \leqslant2\delta_{j}%
}} \left\vert x_{j}-y\right\vert ^{2\alpha-2d}\right) ^{q/2} d\mathbf{x}%
\,dy\right\} ^{1/q}\\
& \quad\qquad+c\left\{  {\displaystyle\int_{\mathcal{M}}} {\displaystyle\int%
_{\mathbf{X}}} \left( N^{-2-2\varepsilon/d} {\displaystyle\sum
_{j:\operatorname*{dist}\left( y,\mathcal{X}_{j}\right) >2\delta_{j}}}
\left\vert x_{j}-y\right\vert ^{2\alpha-2d-2\varepsilon}\right) ^{q/2}
d\mathbf{x}\,dy\right\} ^{1/q}.
\end{align*}
Under the assumption that $\operatorname*{diam}\left( \mathcal{X}_{j}\right)
\approx N^{-1/d}$, there is only a bounded number of $\mathcal{X}_{j}$ with
$\operatorname*{dist}\left( y,\mathcal{X}_{j}\right)  \leqslant
2\operatorname*{diam}\left( \mathcal{X}_{j}\right) $. Hence
\begin{align*}
&  \left\{  {\displaystyle\int_{\mathcal{M}}} {\displaystyle\int_{\mathbf{X}}}
\left( N^{-2} {\displaystyle\sum_{j:\operatorname*{dist}\left( y,\mathcal{X}%
_{j}\right)  \leqslant2\delta_{j}}} \left\vert x_{j}-y\right\vert
^{2\alpha-2d}\right) ^{q/2} d\mathbf{x}\,dy\right\} ^{1/q}\\
& \quad\leqslant c\left\{  {\displaystyle\int_{\mathcal{M}}}
{\displaystyle\int_{\mathbf{X}}} {\displaystyle\sum_{j=1}^{N}} \left(
N^{-2}\chi_{\left\{ \operatorname*{dist}\left( y,\mathcal{X}_{j}\right)
\leqslant2\delta_{j}\right\} }\left( y\right)  \left\vert x_{j}-y\right\vert
^{2\alpha-2d}\right) ^{q/2}d\mathbf{x}\,dy\right\} ^{1/q}\\
& \quad\leqslant c\left\{ N^{-q} {\displaystyle\sum_{j=1}^{N}}
{\displaystyle\int_{\mathcal{X}_{j}}} {\displaystyle\int_{\left\{ \left\vert
y-x_{j}\right\vert \leqslant cN^{-1/d}\right\} }} \left\vert x_{j}%
-y\right\vert ^{\alpha q-dq} dy\frac{dx_{j}}{\omega_{j}}\right\} ^{1/q}
\leqslant cN^{-\alpha/d}.
\end{align*}
Moreover,
\begin{align*}
&  N^{-2-2\varepsilon/d} {\displaystyle\sum_{j:\operatorname*{dist}\left(
y,\mathcal{X}_{j}\right) >2\delta_{j}}} \left\vert x_{j}-y\right\vert
^{2\alpha-2d-2\varepsilon}\\
& \quad\leqslant cN^{-1-2\varepsilon/d} {\displaystyle\sum
_{j:\operatorname*{dist}\left( y,\mathcal{X}_{j}\right) >2\delta_{j}}}
\omega_{j}\left\vert x_{j}-y\right\vert ^{2\alpha-2d-2\varepsilon}\\
& \quad\leqslant cN^{-1-2\varepsilon/d} {\displaystyle\sum
_{j:\operatorname*{dist}\left( y,\mathcal{X}_{j}\right) >2\delta_{j}}}
\int_{\mathcal{X}_{j}}\left\vert x-y\right\vert ^{2\alpha-2d-2\varepsilon}dx\\
& \quad\leqslant cN^{-1-2\varepsilon/d} {\displaystyle\int_{\left\{ \left\vert
x-y\right\vert >cN^{-1/d}\right\} }} \left\vert x-y\right\vert ^{2\alpha
-2d-2\varepsilon}dx\\
& \quad\leqslant\left\{
\begin{array}
[c]{ll}%
cN^{-2\alpha/d} & \text{if $\alpha<\varepsilon+d/2$},\\
cN^{-1-2\varepsilon/d}\log N & \text{if $\alpha=\varepsilon+d/2$},\\
cN^{-1-2\varepsilon/d} & \text{if $\alpha>\varepsilon+d/2$}.
\end{array}
\right.
\end{align*}
Hence
\begin{align*}
&  \left\{  {\displaystyle\int_{\mathcal{M}}} {\displaystyle\int_{\mathbf{X}}}
\left( N^{-2-2\varepsilon/d} {\displaystyle\sum_{j:\operatorname*{dist}\left(
y,\mathcal{X}_{j}\right) >2\delta_{j}}} \left\vert x_{j}-y\right\vert
^{2\alpha-2d-2\varepsilon}\right) ^{q/2} d\mathbf{x}\,dy\right\} ^{1/q}\\
& \quad\leqslant\left\{
\begin{array}
[c]{ll}%
cN^{-\alpha/d} & \text{if $\alpha<\varepsilon+d/2$},\\
cN^{-1/2-\varepsilon/d}\left( \log N\right) ^{1/2} & \text{if $\alpha
=\varepsilon+d/2$},\\
cN^{-1/2-\varepsilon/d} & \text{if $\alpha>\varepsilon+d/2$}.
\end{array}
\right.
\end{align*}
 \end{proof}

The following result shows that under some natural assumptions on the kernel,
the mean value estimate in the above corollary is essentially sharp, an is a slightly 
generalized version of Theorem II in the Introduction.

\begin{corollary}
\label{Corollary 4} Let $\mathcal{M}$ be a metric measure space with the
property that there exist $H,K$ and $d$ such that for every $y\in\mathcal{M} $
and $0<r<r_{0}$,
\[
Hr^{d}\leqslant\left\vert \left\{  x\in\mathcal{M}:\left\vert x-y\right\vert
\leqslant r\right\}  \right\vert \leqslant Kr^{d}.
\]
Assume also that $\mathcal{M}$ can be decomposed into a finite disjoint union
of sets in the form $\mathcal{M=X}_{1}\cup\cdots\cup\mathcal{X}_{N}$, with
$\omega_{j}=\left\vert \mathcal{X}_{j}\right\vert \approx N^{-1}$ and
$\delta_{j}=\operatorname*{diam}\left(  \mathcal{X}_{j}\right)  \approx
N^{-1/d} $. Suppose that there exists $0<\alpha<d$ and $\varepsilon>0$, such
that for any $j=1,\ldots,N$ and any $z\in\mathcal{X}_{j}$, and for any $y$
such that $\operatorname{dist}\left(  y,\mathcal{X}_{j}\right)  \geqslant
2\delta_{j}$,
\[
\int_{\mathcal{X}_{j}}\left\vert \Phi\left(  x,y\right)  -\Phi\left(
z,y\right)  \right\vert dx\geqslant cN^{-1-\varepsilon/d}\left(
\operatorname{dist}\left(  y,\mathcal{X}_{j}\right)  \right)  ^{\alpha
-d-\varepsilon}.
\]
Suppose also that for any $y\in M$, the function $x\mapsto\Phi\left(
x,y\right)  $ is continuous in $x\neq y$. Finally, assume that $1<p\leqslant
+\infty$, $1/p+1/q=1$ and $d/p<\alpha<d$. Then
\[
\left\{  {\int_{\mathbf{X}}}\left\Vert \mathcal{E}_{\mathbf{x},\mathbf{\omega
}}\right\Vert_{\Phi,p} ^{q}d\mathbf{x}\right\}  ^{1/q}\geqslant\left\{
\begin{array}
[c]{ll}%
cN^{-\alpha/d} & \text{if $\alpha<d/2+\varepsilon$},\\
cN^{-1/2-\varepsilon/d}\left(  \log N\right)  ^{1/2} & \text{if $\alpha
=d/2+\varepsilon$},\\
cN^{-1/2-\varepsilon/d} & \text{if $\alpha>d/2+\varepsilon$}.
\end{array}
\right.
\]

\end{corollary}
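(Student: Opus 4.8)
The plan is to deduce the lower bound from the left-hand inequality in (\ref{Est2}) of Theorem \ref{Theorem 3}, namely $\left\{\int_{\mathbf{X}}\left\Vert \mathcal{E}_{\mathbf{x},\mathbf{\omega}}\right\Vert _{\Phi,p}^{q}\,d\mathbf{x}\right\}^{1/q}\geqslant A(q)\,\Delta(\Phi)$, so that everything reduces to bounding $\Delta(\Phi)$ from below. Writing $\bar{\Phi}_{j}(y)=\omega_{j}^{-1}\int_{\mathcal{X}_{j}}\Phi(z,y)\,dz$ and
\[
I_{j}(x_{j},y)=\int_{\mathcal{X}_{j}}\big(\Phi(x_{j},y)-\Phi(z_{j},y)\big)\,dz_{j}=\omega_{j}\big(\Phi(x_{j},y)-\bar{\Phi}_{j}(y)\big),
\]
one has $\Delta(\Phi)^{q}=\int_{\mathcal{M}}\int_{\mathbf{X}}\big(\sum_{j}|I_{j}(x_{j},y)|^{2}\big)^{q/2}\,d\mathbf{x}\,dy$ by Tonelli. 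Since each $I_{j}$ depends on $x_{j}$ only and $d\mathbf{x}$ is a product probability measure, the inner integral is an expectation of a convex function of the independent vector $\big(|I_{j}(\cdot,y)|\big)_{j}$.

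Two ingredients drive the estimate. First, a cell-wise $\mathbb{L}^{1}$ lower bound: for any $z\in\mathcal{X}_{j}$ the triangle inequality gives $\int_{\mathcal{X}_{j}}|\Phi(x,y)-\Phi(z,y)|\,dx\leqslant\int_{\mathcal{X}_{j}}|\Phi(x,y)-\bar{\Phi}_{j}(y)|\,dx+\omega_{j}|\bar{\Phi}_{j}(y)-\Phi(z,y)|$; taking the infimum over $z$ and using that $|\Phi(x,y)-\bar{\Phi}_{j}(y)|$ is everywhere at least the distance from $\bar{\Phi}_{j}(y)$ to the set of values $\{\Phi(z,y):z\in\mathcal{X}_{j}\}$, the hypothesis yields, for cells with $\operatorname{dist}(y,\mathcal{X}_{j})\geqslant 2\delta_{j}$,
\[
\int_{\mathbf{X}}|I_{j}(x_{j},y)|\,d\mathbf{x}=\int_{\mathcal{X}_{j}}|\Phi(x,y)-\bar{\Phi}_{j}(y)|\,dx\geqslant\tfrac{1}{2}L_{j}(y),\qquad L_{j}(y):=cN^{-1-\varepsilon/d}\big(\operatorname{dist}(y,\mathcal{X}_{j})\big)^{\alpha-d-\varepsilon}.
\]
Second --- and this is what lets one argument cover every $q\in[1,+\infty)$ --- the map $\mathbf{z}\mapsto(\sum_{j}z_{j}^{2})^{q/2}$ is convex for $q\geqslant 1$, so Jensen applied to the random vector $\big(|I_{j}(\cdot,y)|\big)_{j}$ gives
\[
\int_{\mathbf{X}}\Big(\sum_{j}|I_{j}(x_{j},y)|^{2}\Big)^{q/2}d\mathbf{x}\geqslant\Big(\sum_{j}\big(\textstyle\int_{\mathbf{X}}|I_{j}|\,d\mathbf{x}\big)^{2}\Big)^{q/2}\geqslant 2^{-q}\Big(\sum_{j:\,\operatorname{dist}(y,\mathcal{X}_{j})\geqslant 2\delta_{j}}L_{j}(y)^{2}\Big)^{q/2}.
\]
It is essential to apply Jensen to the vector rather than to the scalar sum, since $(\sum z_{j}^{2})^{q/2}$ is concave for $q<2$; the convexity of the $\ell^{2}$-norm raised to the power $q\geqslant 1$ is exactly what survives.

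It then remains to show $\sum_{j\ \mathrm{far}}L_{j}(y)^{2}$ is bounded below by the square of the claimed right-hand side, for $y$ in a set of measure bounded below. Grouping the far cells into dyadic shells $\operatorname{dist}(y,\mathcal{X}_{j})\approx 2^{k}N^{-1/d}$, the lower Ahlfors bound $Hr^{d}\leqslant|\{x:|x-y|<r\}|$ together with $|\mathcal{X}_{j}|\approx N^{-1}$ forces about $2^{kd}$ cells in the $k$-th shell, whence $\sum_{j\ \mathrm{far}}L_{j}(y)^{2}\approx N^{-2\alpha/d}\sum_{k}2^{k(2\alpha-d-2\varepsilon)}$, the sum running up to $k_{\max}\approx d^{-1}\log_{2}N$. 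The sign of $2\alpha-d-2\varepsilon$ produces precisely the three regimes $N^{-2\alpha/d}$, $N^{-2\alpha/d}\log N$ and $N^{-1-2\varepsilon/d}$, which are the squares of the asserted bounds; integrating $(\,\cdots)^{q/2}$ over the good set and invoking Theorem \ref{Theorem 3} closes the argument.

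The main obstacle is this last counting step: one must verify that the relevant shells are genuinely populated for a set of $y$ whose measure stays bounded below as $N\to+\infty$. The dominant shell is the outermost one (distance $\approx\operatorname{diam}(\mathcal{M})$) when $\alpha>d/2+\varepsilon$ and the innermost far shell (distance $\approx N^{-1/d}$) when $\alpha<d/2+\varepsilon$, so the two regimes must be populated for different reasons; this is where connectedness of $\mathcal{M}$ and the two-sided measure bounds are used, and one should restrict to shells $k\geqslant k_{0}$ for a fixed $k_{0}$ to guarantee the far condition $\operatorname{dist}(y,\mathcal{X}_{j})\geqslant 2\delta_{j}$ without altering the asymptotics. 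A secondary technical point is the cell-wise step when $\bar{\Phi}_{j}(y)$ falls in a gap of the range of $\Phi(\cdot,y)$, which is handled above through the distance-to-range inequality (continuity of $x\mapsto\Phi(x,y)$ ensuring the quantities are well defined).
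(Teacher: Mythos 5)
Your proof is correct, but it follows a genuinely different route from the paper's at two points. First, the paper does not use the lower bound in (\ref{Est2}) directly: it first applies H\"older's inequality in $y$ to pass from the $\mathbb{L}^{q}(dy)$ norm of $F(y)=\sum_{j}I_{j}(x_{j},y)$ to $\left\vert \mathcal{M}\right\vert ^{-1/p}$ times its $\mathbb{L}^{1}(dy)$ norm, applies Corollary \ref{Cor- MZ} with exponent $1$, and then extracts the first moments from the resulting $\ell^{2}$ sum via the weighted Cauchy--Schwarz inequality $\sum_{j}\alpha_{j}^{2}\beta_{j}\leqslant(\sum_{j}\alpha_{j}^{2})^{1/2}(\sum_{j}\alpha_{j}^{2}\beta_{j}^{2})^{1/2}$ with weights $\alpha_{j}=\omega_{j}\left\vert v_{j}-y\right\vert ^{\alpha-d-\varepsilon}$. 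Your single application of Jensen's inequality to the convex map $\mathbf{z}\mapsto(\sum_{j}z_{j}^{2})^{q/2}$ of the independent vector $(\left\vert I_{j}\right\vert )_{j}$ replaces both of these steps, stays at exponent $q$ throughout, and lands on exactly the same quantity $\sum_{j\,\mathrm{far}}(\text{cell-wise }\mathbb{L}^{1}\text{ bound})^{2}$; your parenthetical remark that one must apply Jensen to the vector rather than to the scalar $\sum_{j}\left\vert I_{j}\right\vert ^{2}$ is exactly the right point. Second, the paper realizes the cell average $\bar{\Phi}_{j}(y)$ as a value $\Phi(x_{j}^{\ast},y)$ by the mean value theorem for integrals, which is where the continuity hypothesis enters (and which tacitly uses that the range of $\Phi(\cdot,y)$ on $\mathcal{X}_{j}$ is an interval); your distance-to-range argument, at the cost of a factor $\tfrac{1}{2}$, needs neither continuity nor any connectedness of the cells, and is the more robust of the two. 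The final dyadic-shell count is the same in both arguments; note that the paper carries it out by comparing $\sum_{j}\omega_{j}^{2}\left\vert v_{j}-y\right\vert ^{2\alpha-2d-2\varepsilon}$ with $N^{-1}\int_{\{\left\vert x-y\right\vert >cN^{-1/d}\}}\left\vert x-y\right\vert ^{2\alpha-2d-2\varepsilon}dx$ and using the annulus lower bound $\left\vert \left\{ r<\left\vert x-y\right\vert \leqslant\lambda r\right\} \right\vert \geqslant Kr^{d}$ for $\lambda\geqslant(2K/H)^{1/d}$, which follows from the two-sided Ahlfors condition alone --- so the connectedness of $\mathcal{M}$ that you flag as a worry in your last paragraph is not actually needed, and the populated-shell issue you call ``the main obstacle'' is handled uniformly in $y$ for all shells of radius below $r_{0}/\lambda$, which suffices for the asymptotics.
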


\begin{proof}
It follows from Lemma \ref{Lemma 1} that
\begin{align*}
\left\{  {\int_{\mathbf{X}}}\left\Vert \mathcal{E}_{\mathbf{x},\mathbf{\omega
}}\right\Vert ^{q}d\mathbf{x}\right\}  ^{1/q} &  =\left\{  {\int_{\mathbf{X}}%
}{\int_{\mathcal{M}}}\left\vert {\sum_{j=1}^{N}}{\int_{\mathcal{X}_{j}}%
}\left(  \Phi(x_{j},y)-\Phi(z_{j},y)\right)  dz_{j}\right\vert ^{q}%
dy\,d\mathbf{x}\right\}  ^{1/q}\\
&  \geqslant\left\vert \mathcal{M}\right\vert ^{-1/p}{\int_{\mathbf{X}}}%
{\int_{\mathcal{M}}}\left\vert {\sum_{j=1}^{N}}{\int_{\mathcal{X}_{j}}}\left(
\Phi(x_{j},y)-\Phi(z_{j},y)\right)  dz_{j}\right\vert dy\,d\mathbf{x}.
\end{align*}
By Corollary \ref{Cor-MZ}, this is bounded from below by
\begin{align*}
&  \left\vert \mathcal{M}\right\vert ^{-1/p}A\left(  1\right)  {\int%
_{\mathcal{M}}}{\int_{\mathbf{X}}}\left(  {\sum_{j}}\left\vert {\int%
_{\mathcal{X}_{j}}}\left(  \Phi(x_{j},y)-\Phi(z_{j},y)\right)  dz_{j}%
\right\vert ^{2}\right)  ^{1/2}d\mathbf{x}\,dy\\
&  \quad\geqslant\left\vert \mathcal{M}\right\vert ^{-1/p}A\left(  1\right)
{\int_{\mathcal{M}}}{\int_{\mathbf{X}}}\left(  {\sum_{j:\operatorname{dist}%
\left(  y,\mathcal{X}_{j}\right)  \geqslant2\delta_{j}}}\left\vert
{\int_{\mathcal{X}_{j}}}\left(  \Phi(x_{j},y)-\Phi(z_{j},y)\right)
dz_{j}\right\vert ^{2}\right)  ^{1/2}d\mathbf{x}\,dy.
\end{align*}
By the continuity of $z_{j}\rightarrow\Phi(z_{j},y)$, there exists a point
$x_{j}^{\ast}$, depending on $y$, such that
\[
{\int_{\mathcal{X}_{j}}}\Phi(z_{j},y)dz_{j}=\omega_{j}\Phi(x_{j}^{\ast},y).
\]
Thus
\begin{align*}
&  {\int_{\mathbf{X}}}\left(  {\sum_{j:\operatorname{dist}\left(
y,\mathcal{X}_{j}\right)  \geqslant2\delta_{j}}}\left\vert {\int%
_{\mathcal{X}_{j}}}\left(  \Phi(x_{j},y)-\Phi(z_{j},y)\right)  dz_{j}%
\right\vert ^{2}\right)  ^{1/2}d\mathbf{x}\\
&  \quad={\int_{\mathbf{X}}}\left(  {\sum_{j:\operatorname{dist}\left(
y,\mathcal{X}_{j}\right)  \geqslant2\delta_{j}}}\omega_{j}^{2}\left\vert
\Phi(x_{j},y)-\Phi(x_{j}^{\ast},y)\right\vert ^{2}\right)  ^{1/2}d\mathbf{x}.
\end{align*}
For any two positive sequences $\left\{  \alpha_{j}\right\}  $ and $\left\{
\beta_{j}\right\}  $ we clearly have
\[
\sum_{j}\alpha_{j}^{2}\beta_{j}\leqslant\left(  \sum_{j}\alpha_{j}^{2}\right)
^{1/2}\left(  \sum_{j}\alpha_{j}^{2}\beta_{j}^{2}\right)  ^{1/2}.
\]
Thus if $v_{j}$ is a point of the closure of $\mathcal{X}_{j}$ that minimizes
the distance from $y$,
\begin{align*}
&  {\int_{\mathbf{X}}}\left(  {\sum_{j:\operatorname{dist}\left(
y,\mathcal{X}_{j}\right)  \geqslant2\delta_{j}}}\omega_{j}^{2}\left\vert
v_{j}-y\right\vert ^{2\alpha-2d-2\varepsilon}\frac{\left\vert \Phi
(x_{j},y)-\Phi(x_{j}^{\ast},y)\right\vert ^{2}}{\left\vert v_{j}-y\right\vert
^{2\alpha-2d-2\varepsilon}}\right)  ^{1/2}d\mathbf{x}\\
&  \quad\geqslant{\int_{\mathbf{X}}}\left(  {\sum_{j:\operatorname{dist}%
\left(  y,\mathcal{X}_{j}\right)  \geqslant2\delta_{j}}}\omega_{j}%
^{2}\left\vert v_{j}-y\right\vert ^{2\alpha-2d-2\varepsilon}\right)
^{-1/2}\times\\
&  \quad\qquad\qquad\times{\sum_{j:\operatorname{dist}\left(  y,\mathcal{X}%
_{j}\right)  \geqslant2\delta_{j}}}\omega_{j}^{2}\left\vert v_{j}-y\right\vert
^{2\alpha-2d-2\varepsilon}\frac{\left\vert \Phi(x_{j},y)-\Phi(x_{j}^{\ast
},y)\right\vert }{\left\vert v_{j}-y\right\vert ^{\alpha-d-\varepsilon}%
}\,d\mathbf{x}\\
&  \quad=\left(  {\sum_{j:\operatorname{dist}\left(  y,\mathcal{X}_{j}\right)
\geqslant2\delta_{j}}}\omega_{j}^{2}\left\vert v_{j}-y\right\vert
^{2\alpha-2d-2\varepsilon}\right)  ^{-1/2}\times\\
&  \quad\qquad\qquad\times{\sum_{j:\operatorname{dist}\left(  y,\mathcal{X}%
_{j}\right)  \geqslant2\delta_{j}}}\omega_{j}^{2}\left\vert v_{j}-y\right\vert
^{\alpha-d-\varepsilon}\left(  {\int_{\mathbf{X}}}\left\vert \Phi
(x_{j},y)-\Phi(x_{j}^{\ast},y)\right\vert d\mathbf{x}\right) \\
&  \quad\geqslant c\left(  {\sum_{j:\operatorname{dist}\left(  y,\mathcal{X}%
_{j}\right)  \geqslant2\delta_{j}}}\omega_{j}^{2}\left\vert v_{j}-y\right\vert
^{2\alpha-2d-2\varepsilon}\right)  ^{-1/2}{\sum_{j:\operatorname{dist}\left(
y,\mathcal{X}_{j}\right)  \geqslant2\delta_{j}}}\omega_{j}^{2}\left\vert
v_{j}-y\right\vert ^{2\alpha-2d-2\varepsilon}N^{-\varepsilon/d}\\
&  \quad=c\left(  {\sum_{j:\operatorname{dist}\left(  y,\mathcal{X}%
_{j}\right)  \geqslant2\delta_{j}}}\omega_{j}^{2}\left\vert v_{j}-y\right\vert
^{2\alpha-2d-2\varepsilon}\right)  ^{1/2}N^{-\varepsilon/d}.
\end{align*}
The desired result now follows from the estimates
\[
{\sum_{j:\operatorname{dist}\left(  y,\mathcal{X}_{j}\right)  \geqslant
2\delta_{j}}}\omega_{j}^{2}\left\vert v_{j}-y\right\vert ^{2\alpha
-2d-2\varepsilon}\geqslant c\left\{
\begin{array}
[c]{ll}%
N^{-2\alpha/d+2\varepsilon/d} & \text{if $\alpha<d/2+\varepsilon$},\\
N^{-1}\log N & \text{if $\alpha=d/2+\varepsilon$},\\
N^{-1} & \text{if $\alpha>d/2+\varepsilon$}.
\end{array}
\right.
\]
Indeed,
\[
{\sum_{j:\operatorname{dist}\left(  y,\mathcal{X}_{j}\right)  \geqslant
2\delta_{j}}}\omega_{j}^{2}\left\vert v_{j}-y\right\vert ^{2\alpha
-2d-2\varepsilon}\geqslant cN^{-1}{\int_{\left\{  \left\vert x-y\right\vert
>cN^{-1/d}\right\}  }}\left\vert x-y\right\vert ^{2\alpha-2d-2\varepsilon}dx.
\]
If $\lambda\geqslant\left(  2K/H\right)  ^{1/d}$, then for every centre $y$
and $r<r_{0}/\lambda$,
\[
\left\vert \left\{  x\in\mathcal{M}:r<\left\vert x-y\right\vert \leqslant
\lambda r\right\}  \right\vert \geqslant Kr^{d}.
\]
This gives
\begin{align*}
&  N^{-1}{\int_{\left\{  \left\vert x-y\right\vert >cN^{-1/d}\right\}  }%
}\left\vert x-y\right\vert ^{2\alpha-2d-2\varepsilon}dx\\
&  \quad\geqslant N^{-1}\sum_{k=0}^{\left[  \log_{\lambda}\left(  r_{0}%
N^{1/d}\right)  \right]  -1}{\int_{\left\{  \lambda^{k}N^{-1/d}\leqslant
\left\vert x-y\right\vert <\lambda^{k+1}N^{-1/d}\right\}  }}\left\vert
x-y\right\vert ^{2\alpha-2d-2\varepsilon}dx\\
&  \quad\geqslant N^{-1}\sum_{k=0}^{\left[  \log_{\lambda}\left(  r_{0}%
N^{1/d}\right)  \right]  -1}\left(  \lambda^{k}N^{-1/d}\right)  ^{2\alpha
-2d-2\varepsilon}K\left(  \lambda^{k}N^{-1/d}\right)  ^{d}\\
&  \quad=KN^{-2\alpha/d+2\varepsilon/d}\sum_{k=0}^{\left[  \log_{\lambda
}\left(  r_{0}N^{1/d}\right)  \right]  -1}\lambda^{k\left(  2\alpha
-d-2\varepsilon\right)  }\\
&  \quad\geqslant c\left\{
\begin{array}
[c]{ll}%
N^{-2\alpha/d+2\varepsilon/d} & \text{if $\alpha<d/2+\varepsilon$},\\
N^{-1}\log N & \text{if $\alpha=d/2+\varepsilon$},\\
N^{-1} & \text{if $\alpha>d/2+\varepsilon$}.
\end{array}
\right.
\end{align*}
 \end{proof}

\begin{example}\label{example}
Let $\mathcal{M}$ be a $d$-dimensional compact Riemannian manifold. Let
$\left\{  \lambda^{2}\right\}  $ and $\left\{  \varphi_{\lambda}(x)\right\}  $
be the eigenvalues and a complete orthonormal system of eigenfunctions of the
Laplace Beltrami operator $\Delta$ respectively. Every tempered distribution
on $\mathcal{M}$ has Fourier transform and series
\[
\mathcal{F}f(\lambda)={\int_{\mathcal{M}}}f(y)\overline{\varphi_{\lambda}%
(y)}dy\quad\mbox{and}\quad f(x)={\sum\limits_{\lambda}}\mathcal{F}%
f(\lambda)\varphi_{\lambda}(x).
\]
The Bessel kernel $B^{\alpha}(x,y)$, $-\infty<\alpha<+\infty$, is a
distribution defined by the expansion
\[
B^{\alpha}(x,y)={\sum_{\lambda}}\left(  1+\lambda^{2}\right)  ^{-\alpha
/2}\varphi_{\lambda}(x)\overline{\varphi_{\lambda}(y)}.
\]
A distribution $f(x)$ is the Bessel potential of a distribution $g(x)$ if
\[
f(x)={\int_{\mathcal{M}}}B^{\alpha}(x,y)g(y)dy={\sum_{\lambda}}\left(
1+\lambda^{2}\right)  ^{-\alpha/2}\mathcal{F}g(\lambda)\varphi_{\lambda}(x).
\]
Bessel potentials of functions in $\mathbb{L}^{p}\left(  \mathcal{M}\right)  $
define the fractional Sobolev space $\mathbb{H}_{p}^{\alpha}\left(
\mathcal{M}\right)  $. If $0<\alpha<d$, then the Bessel kernel satisfies the
estimates
\[
\left\vert B^{\alpha}(x,y)\right\vert \leqslant c\left\vert x-y\right\vert
^{\alpha-d}\quad\mbox{and}\quad\left\vert \nabla B^{\alpha}(x,y)\right\vert
\leqslant c\left\vert x-y\right\vert ^{\alpha-d-1}.
\]
See \cite[Lemmas 2.5 and 2.6]{BCCGST}. In particular, Corollary \ref{Corollary 3} applies with
 $\Phi(x,y)=B^{\alpha}(x,y)$ and $\varepsilon=1$. Indeed, using the Hadamard parametrix for the wave equation,
see, for example, \cite{BCo}, one can prove a more precise result: there is a
smooth positive function $C\left(  y\right)  $ and positive constants $\eta$
and $c$ such that
\[
B^{\alpha}\left(  x,y\right)  =C\left(  y\right)  \left\vert x-y\right\vert
^{\alpha-d}+E\left(  x,y\right)  ,
\]
with
\[
\left\vert E\left(  x,y\right)  \right\vert \leqslant c\left\vert
x-y\right\vert ^{\alpha-d+\eta}\quad\mbox{and}\quad\left\vert \nabla E\left(
x,y\right)  \right\vert \leqslant c\left\vert x-y\right\vert ^{\alpha
-d-1+\eta}.
\]
It then follows that Corollary \ref{Corollary 4} applies also. See
\cite[Theorems 24 and 25]{BSSW} for the case of the sphere. 

\end{example}

\section{\label{numerical 2} Numerical integration in Besov spaces}

The techniques of the previous section can also be used to study the error in
numerical integration from a different perspective. So far we have considered
the worst case error
\[
\left\{  {\displaystyle\int_{\mathbf{X}}} \sup_{\left\Vert
f\right\Vert _{\mathbb{H}_{p}^{\Phi}\left(  \mathcal{M}\right)  }\leqslant1}
\left\vert \mathcal{E}_{\mathbf{x},\mathbf{\omega}} \left(  f\right)
\right\vert ^{q}d\mathbf{x}\right\}  ^{1/q},
\]
whereas now we will estimate the error
\[
\left\{
{\displaystyle\int_{\mathbf{X}}} \left\vert \mathcal{E}_{\mathbf{x}%
,\mathbf{\omega}}\left(  f\right)  \right\vert ^{p} d\mathbf{x}\right\}
^{1/p}.
\]
for a given $f\in\dot{\mathbb{B}}_{p,\infty
}^{\varphi} \left(  \mathcal{M}\right)$. The following is a slightly generalized version
of Theorem III in the Introduction.



\begin{theorem}
\label{Thm Besov} Assume that a metric measure space $\mathcal{M}$ can be
decomposed into a finite disjoint union of sets in the form $\mathcal{M=X}%
_{1}\cup\cdots\cup\mathcal{X}_{N}$, with measure $0<\left\vert \mathcal{X}%
_{j}\right\vert =\omega_{j}<+\infty$ and $0<\operatorname*{diam}\left(
\mathcal{X}_{j}\right)  =\delta_{j}<+\infty$. Also let $\varphi\left(
t\right)  $ be a non-negative increasing function in $t\geqslant0$, and let
$\dot{\mathbb{B}}_{p,\infty}^{\varphi}\left(  \mathcal{M}\right)  $ be the
associated Besov space. Then for every $1\leqslant p\leqslant+\infty$,
\begin{equation}
\left\{  {\int_{\mathbf{X}}}\left\vert \mathcal{E}_{\mathbf{x},\mathbf{\omega
}}\left(  f\right)  \right\vert ^{p}d\mathbf{x}\right\}  ^{1/p}\leqslant
2\left\vert \mathcal{M}\right\vert ^{1-1/p}\varphi\left(  2\sup\left\{
\delta_{j}\right\}  \right)  \left\Vert f\right\Vert _{\dot{\mathbb{B}}_{p,\infty
}^{\varphi}\left(  \mathcal{M}\right)  }.\label{1}%
\end{equation}
Furthermore, if $1\leqslant p\leqslant2$, then
\begin{equation}
\left\{  {\int_{\mathbf{X}}}\left\vert \mathcal{E}_{\mathbf{x},\mathbf{\omega
}}\left(  f\right)  \right\vert ^{p}d\mathbf{x}\right\}  ^{1/p}\leqslant
2B\left(  p\right)  \sup\left\{  \omega_{j}^{1-1/p}\right\}  \varphi\left(
2\sup\left\{  \delta_{j}\right\}  \right)  \left\Vert f\right\Vert
_{\dot{\mathbb{B}}_{p,\infty}^{\varphi}\left(  \mathcal{M}\right)  },\label{2}%
\end{equation}
and if $2\leqslant p<+\infty$, then
\begin{equation}
\left\{  {\int_{\mathbf{X}}}\left\vert \mathcal{E}_{\mathbf{x},\mathbf{\omega
}}\left(  f\right)  \right\vert ^{p}d\mathbf{x}\right\}  ^{1/p}\leqslant
2B\left(  p\right)  \left\vert \mathcal{M}\right\vert ^{1/2-1/p}\sup\left\{
\omega_{j}^{1/2}\right\}  \varphi\left(  2\sup\left\{  \delta_{j}\right\}
\right)  \left\Vert f\right\Vert _{\dot{\mathbb{B}}_{p,\infty}^{\varphi}\left(
\mathcal{M}\right)  }.\label{3}%
\end{equation}

\end{theorem}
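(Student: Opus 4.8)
The plan is to reduce all three inequalities to a single-scale estimate for the local oscillation of $f$. Writing $\int_{\mathcal{M}}f=\sum_{j}\int_{\mathcal{X}_{j}}f(y_{j})\,dy_{j}$ and using $\omega_{j}=|\mathcal{X}_{j}|$, I first rewrite the error as $\mathcal{E}_{\mathbf{x},\mathbf{\omega}}(f)=\sum_{j=1}^{N}\int_{\mathcal{X}_{j}}(f(x_{j})-f(y_{j}))\,dy_{j}$, so that it is controlled by how much $f$ oscillates on each piece $\mathcal{X}_{j}$. To bound this I would use not the whole family of gradients but one fixed scale: let $n_{\ast}$ be the largest integer with $2^{-n_{\ast}}\geqslant\sup_{j}\delta_{j}$. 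Since $\mathcal{X}_{j}\subseteq\mathcal{M}$ gives $\sup_{j}\delta_{j}\leqslant\operatorname{diam}(\mathcal{M})$, one has $n_{\ast}\geqslant-n_{0}$, so this index is admissible. Fix a $\varphi$-gradient $\{g_{n}\}$ of $f$ with $\sup_{n}\|g_{n}\|_{L^{p}}\leqslant\|f\|_{\mathbb{B}_{p,\infty}^{\varphi}}+\varepsilon$, and set $g=g_{n_{\ast}}$ and $\Phi_{0}=\varphi(2\sup_{j}\delta_{j})$. For $x_{j},y_{j}\in\mathcal{X}_{j}$ we have $|x_{j}-y_{j}|\leqslant\delta_{j}\leqslant2^{-n_{\ast}}<2\sup_{j}\delta_{j}$, so Definition \ref{def:succ phi-gradient} and the monotonicity of $\varphi$ give $|f(x_{j})-f(y_{j})|\leqslant\Phi_{0}(g(x_{j})+g(y_{j}))$, whence the single pointwise bound $|\int_{\mathcal{X}_{j}}(f(x_{j})-f(y_{j}))\,dy_{j}|\leqslant\Phi_{0}(\omega_{j}g(x_{j})+\int_{\mathcal{X}_{j}}g)$. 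Everything else is extraction of the right power of $|\mathcal{M}|$ and of $\sup_{j}\omega_{j}$; I will repeatedly use that $\mathbf{X}$ is a probability space, since $\int_{\mathbf{X}}d\mathbf{x}=\prod_{j}(\omega_{j}/\omega_{j})=1$.

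For the crude estimate $(\ref{1})$ I would simply apply the triangle inequality in the sum over $j$ and then Minkowski's inequality in $L^{p}(\mathbf{X})$. The constant part $\sum_{j}\int_{\mathcal{X}_{j}}g=\int_{\mathcal{M}}g\leqslant|\mathcal{M}|^{1-1/p}\|g\|_{L^{p}}$ (by H\"older) contributes itself, being the $L^{p}(\mathbf{X})$ norm of a constant on a probability space. The fluctuating part $\sum_{j}\omega_{j}g(x_{j})$ has $L^{p}(\mathbf{X})$ norm at most $\sum_{j}\omega_{j}^{1-1/p}(\int_{\mathcal{X}_{j}}g^{p})^{1/p}$, which a second H\"older over $j$ with conjugate exponent $p'=p/(p-1)$, together with $\sum_{j}\omega_{j}=|\mathcal{M}|$, bounds again by $|\mathcal{M}|^{1-1/p}\|g\|_{L^{p}}$. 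Adding the two equal contributions yields the constant $2$ and the power $|\mathcal{M}|^{1-1/p}$; letting $\varepsilon\to0$ gives $(\ref{1})$, including the case $p=\infty$.

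For the sharper bounds $(\ref{2})$ and $(\ref{3})$ I would first apply Corollary \ref{Cor- MZ} with $f_{j}(\mathbf{x})=\omega_{j}f(x_{j})$, which replaces $\{\int_{\mathbf{X}}|\mathcal{E}_{\mathbf{x},\mathbf{\omega}}(f)|^{p}\,d\mathbf{x}\}^{1/p}$ by $B(p)$ times the square-function quantity $\{\int_{\mathbf{X}}(\sum_{j}b_{j}^{2})^{p/2}\,d\mathbf{x}\}^{1/p}$, where $b_{j}=\Phi_{0}(\omega_{j}g(x_{j})+\int_{\mathcal{X}_{j}}g)$ and $\|b_{j}\|_{L^{p}(\mathbf{X})}\leqslant2\Phi_{0}\omega_{j}^{1-1/p}(\int_{\mathcal{X}_{j}}g^{p})^{1/p}$. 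When $1\leqslant p\leqslant2$ the function $t\mapsto t^{p/2}$ is subadditive, so $(\sum_{j}b_{j}^{2})^{p/2}\leqslant\sum_{j}b_{j}^{p}$ and the problem decouples into $\sum_{j}\|b_{j}\|_{L^{p}(\mathbf{X})}^{p}\leqslant(2\Phi_{0})^{p}\sum_{j}\omega_{j}^{p-1}\int_{\mathcal{X}_{j}}g^{p}$; bounding $\omega_{j}^{p-1}$ by $\sup_{k}\omega_{k}^{p-1}$ and summing $\int_{\mathcal{X}_{j}}g^{p}$ to $\|g\|_{L^{p}}^{p}$ produces $(\ref{2})$. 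When $2\leqslant p<\infty$ I would instead use the triangle inequality in $L^{p/2}(\mathbf{X})$, giving $\|\sum_{j}b_{j}^{2}\|_{L^{p/2}}\leqslant\sum_{j}\|b_{j}\|_{L^{p}(\mathbf{X})}^{2}\leqslant4\Phi_{0}^{2}\sum_{j}\omega_{j}^{2-2/p}(\int_{\mathcal{X}_{j}}g^{p})^{2/p}$.

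The one genuinely delicate point, and what I expect to be the main obstacle, is the exponent bookkeeping in this last case: extracting exactly $|\mathcal{M}|^{1/2-1/p}\sup_{j}\omega_{j}^{1/2}$ rather than some other combination. I would handle it by writing $\omega_{j}^{2-2/p}\leqslant\omega_{j}(\sup_{k}\omega_{k})^{1-2/p}$, legitimate since $1-2/p\geqslant0$ for $p\geqslant2$, and then applying H\"older with exponents $p/2$ and $p/(p-2)$ to $\sum_{j}\omega_{j}(\int_{\mathcal{X}_{j}}g^{p})^{2/p}$, using $\sum_{j}\omega_{j}^{p/(p-2)}\leqslant(\sup_{k}\omega_{k})^{2/(p-2)}|\mathcal{M}|$. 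Collecting the powers of $\sup_{k}\omega_{k}$ and of $|\mathcal{M}|$ then gives $\|\sum_{j}b_{j}^{2}\|_{L^{p/2}}\leqslant4\Phi_{0}^{2}(\sup_{j}\omega_{j})|\mathcal{M}|^{1-2/p}\|g\|_{L^{p}}^{2}$; taking a square root, multiplying by $B(p)$, and letting $\varepsilon\to0$ yields $(\ref{3})$. Apart from this arithmetic, the two facts one must not overlook are the admissibility of the single-scale index $n_{\ast}\geqslant-n_{0}$ and the probability-space normalization of $\mathbf{X}$.
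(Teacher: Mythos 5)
Your proposal is correct and follows essentially the same route as the paper: reduce the error to local oscillations over the $\mathcal{X}_{j}$ at the single scale $2^{-n}\approx\sup_{j}\delta_{j}$, use H\"older over $j$ for (\ref{1}), and combine Corollary \ref{Cor- MZ} with the embeddings $\ell^{2}\hookrightarrow\ell^{p}$ (for $p\leqslant2$) and H\"older with exponents $p/(p-2)$, $p/2$ (for $p\geqslant2$) for (\ref{2}) and (\ref{3}). The only cosmetic difference is that you apply the pointwise $\varphi$-gradient bound directly to get the majorant $\Phi_{0}(\omega_{j}g(x_{j})+\int_{\mathcal{X}_{j}}g)$, where the paper first packages this as an $L^{p}$ Poincar\'e inequality (Lemma \ref{Poincare}); the constants and exponents come out identically.
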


Observe that the estimate (\ref{1}) is of some interest only for large $p$.
Indeed if $1\leqslant p\leqslant2$, then (\ref{2}) is better than (\ref{1}),
and if $2\leqslant p<+\infty$ and $B\left(  p\right)  \leqslant\left\vert
\mathcal{M}\right\vert ^{1/2} \left(  \sup\left\{  \omega_{j}^{1/2}\right\}
\right)  ^{-1}$, then (\ref{3}) is better than (\ref{1}).

In order to prove Theorem \ref{Thm Besov}, we need a Poincar\'{e} type
inequality for functions in Haj\l asz--Besov spaces.

\begin{lemma}
\label{Poincare} Let $1\leqslant p\leqslant+\infty$, let $\mathcal{M}$ be a
metric measure space, and let $\left\{  g_{n}\left(  x\right)  \right\}  $ be
a $\varphi$-gradient for an integrable function $f\left(  x\right)  $. Let
$\mathcal{X}$ be a measurable subset of $\mathcal{M}$ with $\omega=\left\vert
\mathcal{X}\right\vert >0$ and $\operatorname*{diam}\left(  \mathcal{X}%
\right)  \leqslant2^{-n}$, and let
\[
f_{\mathcal{X}}=\frac{1}{\omega}{\displaystyle\int_{\mathcal{X}}}f\left(
y\right)  dy.
\]
Then
\[
\left\{  {\int_{\mathcal{X}}}\left\vert f\left(  x\right)  -f_{\mathcal{X}%
}\right\vert ^{p}\frac{dx}{\omega}\right\}  ^{1/p}\leqslant2\varphi\left(
2^{-n}\right)  \left\{  {\int_{\mathcal{X}}}\left\vert g_{n}\left(  x\right)
\right\vert ^{p}\dfrac{dx}{\omega}\right\}  ^{1/p}.
\]

\end{lemma}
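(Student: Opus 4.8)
The plan is to exploit the hypothesis $\operatorname*{diam}(\mathcal{X})\leqslant 2^{-n}$, which guarantees $\left\vert x-y\right\vert\leqslant 2^{-n}$ for \emph{every} pair $x,y\in\mathcal{X}$, so that the defining inequality of the $\varphi$-gradient (Definition \ref{def:succ phi-gradient}) is available uniformly on $\mathcal{X}\times\mathcal{X}$. The starting point is the averaging identity
\[
f\left(x\right)-f_{\mathcal{X}}=\frac{1}{\omega}\int_{\mathcal{X}}\left(f\left(x\right)-f\left(y\right)\right)dy,
\]
which holds for almost every $x\in\mathcal{X}$ because $f$ is integrable and $\left\vert\mathcal{X}\right\vert=\omega$. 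Discarding the null set $\mathcal{N}$ of the definition (a Fubini argument shows the exceptional pairs form a null set of $\mathcal{X}\times\mathcal{X}$, so for a.e.\ $x$ the bound holds for a.e.\ $y$), I would insert $\left\vert f\left(x\right)-f\left(y\right)\right\vert\leqslant\varphi\left(2^{-n}\right)\left(g_{n}\left(x\right)+g_{n}\left(y\right)\right)$ under the integral to obtain, for almost every $x\in\mathcal{X}$,
\[
\left\vert f\left(x\right)-f_{\mathcal{X}}\right\vert\leqslant\frac{1}{\omega}\int_{\mathcal{X}}\left\vert f\left(x\right)-f\left(y\right)\right\vert dy\leqslant\varphi\left(2^{-n}\right)\left(g_{n}\left(x\right)+\frac{1}{\omega}\int_{\mathcal{X}}g_{n}\left(y\right)dy\right).
\]

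Next I would take the $\mathbb{L}^{p}$ norm over $\mathcal{X}$ with respect to the probability measure $dx/\omega$ (note $\int_{\mathcal{X}}dx/\omega=1$). Writing $\overline{g}=\omega^{-1}\int_{\mathcal{X}}g_{n}\left(y\right)dy$ for the constant average, the triangle inequality in $\mathbb{L}^{p}\left(\mathcal{X},dx/\omega\right)$ yields
\[
\left\{\int_{\mathcal{X}}\left\vert f\left(x\right)-f_{\mathcal{X}}\right\vert^{p}\frac{dx}{\omega}\right\}^{1/p}\leqslant\varphi\left(2^{-n}\right)\left(\left\{\int_{\mathcal{X}}\left\vert g_{n}\left(x\right)\right\vert^{p}\frac{dx}{\omega}\right\}^{1/p}+\overline{g}\right),
\]
since the $\mathbb{L}^{p}\left(dx/\omega\right)$-norm of the constant $\overline{g}$ equals $\overline{g}$. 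Finally, because $dx/\omega$ is a probability measure, Jensen's inequality (equivalently H\"older's inequality) gives $\overline{g}\leqslant\{\int_{\mathcal{X}}\left\vert g_{n}\left(x\right)\right\vert^{p}\,dx/\omega\}^{1/p}$, so the two summands are controlled by the same quantity and the constant $2$ appears, which is exactly the assertion. The case $p=+\infty$ is handled identically, replacing the integral average by the essential supremum and using $\overline{g}\leqslant\left\Vert g_{n}\right\Vert_{\mathbb{L}^{\infty}\left(\mathcal{X}\right)}$.

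I do not expect a genuine obstacle: this is the standard Poincar\'e estimate, and the only points requiring care are measure-theoretic, namely that the averaging identity and the pointwise gradient bound hold after removing the null set $\mathcal{N}$, and that the normalization $\omega^{-1}dx$ to a probability measure is precisely what makes the norm of a constant and Jensen's inequality produce the clean factor $2$.
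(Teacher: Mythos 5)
Your argument is correct and is essentially the paper's proof with the steps reordered: the paper first majorizes $\left\vert f(x)-f_{\mathcal{X}}\right\vert^{p}$ by the double average $\int_{\mathcal{X}}\left\vert f(x)-f(y)\right\vert^{p}\,dy/\omega$ and then applies the triangle inequality for $g_{n}(x)+g_{n}(y)$ in $\mathbb{L}^{p}(\mathcal{X}\times\mathcal{X})$, whereas you integrate out $y$ pointwise first and then combine the triangle inequality with Jensen; both routes use exactly the probability normalization $dx/\omega$ to produce the constant $2$. No gap.
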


\begin{proof}
For almost every $x$ and $y$ with $\left\vert x-y\right\vert \leqslant2^{-n}
$, we have
\[
\left\vert f\left(  x\right)  -f\left(  y\right)  \right\vert \leqslant
\varphi\left(  2^{-n}\right)  \left(  g_{n}\left(  x\right)  +g_{n}\left(
y\right)  \right)  .
\]
Then, by H\"{o}lder's inequality, we obtain
\begin{align*}
&  \left\{  {\int_{\mathcal{X}}}\left\vert f\left(  x\right)  -f_{\mathcal{X}%
}\right\vert ^{p}\frac{dx}{\omega}\right\}  ^{1/p}\leqslant\left\{
{\int_{\mathcal{X}}}{\int_{\mathcal{X}}}\left\vert f\left(  x\right)
-f(y)\right\vert ^{p}\dfrac{dx}{\omega}\frac{dy}{\omega}\right\}  ^{1/p}\\
&  \quad\leqslant\varphi\left(  2^{-n}\right)  \left\{  {\int_{\mathcal{X}}%
}{\int_{\mathcal{X}}}\left\vert g_{n}\left(  x\right)  +g_{n}\left(  y\right)
\right\vert ^{p}\frac{dx}{\omega}\frac{dy}{\omega}\right\}  ^{1/p}%
\leqslant2\varphi\left(  2^{-n}\right)  \left\{  {\int_{\mathcal{X}}%
}\left\vert g_{n}\left(  x\right)  \right\vert ^{p}\dfrac{dx}{\omega}\right\}
^{1/p}.
\end{align*}
 \end{proof}


\begin{proof}{\it (of Theorem~\ref{Thm Besov})}
Let $f\in\dot{\mathbb{B}}_{p,\infty}^{\varphi}\left(\mathcal{M}\right)$, and let
$\left\{g_{n}\left(x\right)\right\}$ be a $\varphi$-gradient for $f\left(x\right)$.
Choose $n$ such that $2^{-n-1}<\sup\delta_{j}\leqslant2^{-n}$.
Then by Lemma \ref{Poincare}, we have
\begin{align*}
&
\left\{
{\displaystyle\int_{\mathbf{X}}}
\left\vert
{\displaystyle\sum_{j}}
\omega_{j}f\left(x_{j}\right)-
{\displaystyle\int_{\mathcal{M}}}
f\left(x\right)dx\right\vert^{p}d\mathbf{x}\right\}^{1/p}
=\left\{
{\displaystyle\int_{\mathbf{X}}}
\left\vert
{\displaystyle\sum_{j}}
\omega_{j}\left(f\left(x_{j}\right)-f_{\mathcal{X}_{j}}\right)\right\vert^{p}
d\mathbf{x}\right\}^{1/p}
\\
&\quad
\leqslant
{\displaystyle\sum_{j}}
\omega_{j}\left\{
{\displaystyle\int_{\mathcal{X}_{j}}}
\left\vert f\left(x_{j}\right)-f_{\mathcal{X}_{j}}\right\vert^{p}
\frac{dx_{j}}{\omega_{j}}\right\}^{1/p}
\leqslant2\varphi\left(2^{-n}\right)
{\displaystyle\sum_{j}}
\omega_{j}\left\{
{\displaystyle\int_{\mathcal{X}_{j}}}
\left\vert g_{n}\left(x_{j}\right)\right\vert^{p}\dfrac{dx_{j}}{\omega_{j}}\right\}^{1/p}
\\
&\quad
\leqslant2\varphi\left(2^{-n}\right)\left\{
{\displaystyle\sum_{j}}
\omega_{j}\right\}^{1-1/p}\left\{
{\displaystyle\sum_{j}}
{\displaystyle\int_{\mathcal{X}_{j}}}
\left\vert g_{n}\left(x_{j}\right)\right\vert^{p}dx_{j}\right\}^{1/p}
\\
&\quad
\leqslant2\varphi\left(2\sup\delta_{j}\right)
\left\vert \mathcal{M}\right\vert^{1-1/p}\left\{
{\displaystyle\int_{\mathcal{M}}}
\left\vert g_{n}\left(x\right)\right\vert^{p}dx\right\}^{1/p}.
\end{align*}
The proofs of (\ref{2}) and (\ref{3}) are similar.
Indeed, by Corollary \ref{Cor-MZ}, we have
\begin{align*}
&
\left\{
{\displaystyle\int_{\mathbf{X}}}
\left\vert
{\displaystyle\sum_{j=1}^{N}}
\omega_{j}f\left(x_{j}\right)-
{\displaystyle\int_{\mathcal{M}}}
f(x)dx\right\vert^{p}d\mathbf{x}\right\}^{1/p}
\\
&\quad
\leqslant B\left(p\right)\left\{
{\displaystyle\int_{\mathbf{X}}}
\left(
{\displaystyle\sum_{j=1}^{N}}
\omega_{j}^{2}
\left\vert f\left(x_{j}\right)-f_{\mathcal{X}_{j}}\right\vert^{2}\right)^{p/2}
d\mathbf{x}\right\}^{1/p}.
\end{align*}
Choose $n$ such that $2^{-n-1}<\sup\delta_{j}\leqslant2^{-n}$, and assume
that $1\leqslant p\leqslant2$.
By Lemma \ref{Poincare}, we obtain
\begin{align*}
&
\left\{
{\displaystyle\int_{\mathbf{X}}}
\left(
{\displaystyle\sum_{j=1}^{N}}
\omega_{j}^{2}\left\vert f\left(x_{j}\right)
-f_{\mathcal{X}_{j}}\right\vert^{2}\right)^{p/2}d\mathbf{x}\right\}^{1/p}
\leqslant\left\{
{\displaystyle\int_{\mathbf{X}}}
\left(
{\displaystyle\sum_{j=1}^{N}}
\omega_{j}^{p}\left\vert f\left(x_{j}\right)
-f_{\mathcal{X}_{j}}\right\vert^{p}\right)d\mathbf{x}\right\}^{1/p}
\\
&\quad
=\left\{
{\displaystyle\sum_{j=1}^{N}}
\omega_{j}^{p}
{\displaystyle\int_{\mathcal{X}_{j}}}
\left\vert f\left(x_{j}\right)-f_{\mathcal{X}_{j}}\right\vert^{p}
\frac{dx_{j}}{\omega_{j}}\right\}^{1/p}
\leqslant2\varphi\left(2^{-n}\right)\left\{
{\displaystyle\sum_{j=1}^{N}}
\omega_{j}^{p}
{\displaystyle\int_{\mathcal{X}_{j}}}
\left\vert g_{n}\left(x_{j}\right)\right\vert^{p}
\frac{dx_{j}}{\omega_{j}}\right\}^{1/p}
\\
&\quad
\leqslant2\sup\left(\omega_{j}^{1-1/p}\right)
\varphi\left(2\sup\left\{\delta_{j}\right\}\right)
\left\{\int_{\mathcal{M}}\left\vert g_{n}\left(x\right)\right\vert^{p}dx\right\}^{1/p}.
\end{align*}
Similarly, if $2\leqslant p<+\infty$, then H\"{o}lder's inequality with indices
$p/\left(p-2\right)$ and $p/2$ yields%
\begin{align*}
&
\left\{
{\displaystyle\int_{\mathbf{X}}}
\left(
{\displaystyle\sum_{j=1}^{N}}
\omega_{j}^{2}\left\vert f\left(x_{j}\right)
-f_{\mathcal{X}_{j}}\right\vert^{2}\right)^{p/2}d\mathbf{x}\right\}^{1/p}
\\
&\quad
\leqslant\left\{
{\displaystyle\sum_{j=1}^{N}}
\omega_{j}^{\left(2p-2\right)/\left(p-2\right)}\right\}^{\left(p-2\right)/2p}\left\{
{\displaystyle\sum_{j=1}^{N}}
\omega_{j}
{\displaystyle\int_{\mathcal{X}_{j}}}
\left\vert f\left(x_{j}\right)-f_{\mathcal{X}_{j}}\right\vert^{p}
\frac{dx_{j}}{\omega_{j}}\right\}^{1/p}
\\
&\quad
\leqslant\sup\left(\omega_{j}^{1/2}\right)\left\{
{\displaystyle\sum_{j=1}^{N}}
\omega_{j}\right\}^{\left(p-2\right)/2p}2\varphi\left(2^{-n}\right)
\left\{
{\displaystyle\sum_{j=1}^{N}}
\omega_{j}
{\displaystyle\int_{\mathcal{X}_{j}}}
\left\vert g_{n}\left(x_{j}\right)\right\vert^{p}
\frac{dx_{j}}{\omega_{j}}\right\}^{1/p}
\\
&\quad
\leqslant2\left\vert \mathcal{M}\right\vert^{1/2-1/p}
\sup\left(\omega_{j}^{1/2}\right)\varphi\left(2\sup\delta_{j}\right)\left\{
{\displaystyle\int_{\mathcal{M}}}
\left\vert g_{n}\left(x\right)\right\vert^{p}dx\right\}^{1/p}.
\end{align*}

\end{proof}

\begin{corollary}
Assume that a metric measure space $\mathcal{M}$ can be decomposed into a
finite disjoint union of sets in the form $\mathcal{M=X}_{1}\cup\cdots
\cup\mathcal{X}_{N}$, with $\omega_{j}=\left\vert \mathcal{X}_{j}\right\vert
\approx N^{-1}$ and $\operatorname*{diam}\left(  \mathcal{X}_{j}\right)
\approx N^{-1/d}$ for a suitable positive constant $d$. Then for every
$1\leqslant p<+\infty$ and every $0<\varepsilon<1$, there exists a constant
$c$ with the following property. For every function $f\left(  x\right)  $ in
the Besov space $\dot{\mathbb{B}}_{p,\infty}^{\varphi}\left(  \mathcal{M}\right)  $,
$\varphi\left(  t\right)  =t^{\alpha}$ and $\alpha>0$, with probability
greater than $1-\varepsilon$, a random choice of points $\left\{
x_{j}\right\}  $ in $\left\{  \mathcal{X}_{j}\right\}  $ gives
\[
\left\vert {\sum_{j=1}^{N}}\omega_{j}f\left(  x_{j}\right)  -{\int%
_{\mathcal{M}}}f(x)dx\right\vert \leqslant\left\{
\begin{array}
[c]{ll}%
c\left\Vert f\right\Vert _{\dot{\mathbb{B}}_{p,\infty}^{\varphi}\left(
\mathcal{M}\right)  }N^{1/p-1-\alpha/d} & \text{if $1\leqslant p\leqslant2$%
},\\
c\left\Vert f\right\Vert _{\dot{\mathbb{B}}_{p,\infty}^{\varphi}\left(
\mathcal{M}\right)  }N^{-1/2-\alpha/d} & \text{if $2\leqslant p<+\infty$}.
\end{array}
\right.
\]

\end{corollary}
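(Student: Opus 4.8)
The plan is to obtain this as a direct consequence of Theorem~\ref{Thm Besov}, specializing the weights and diameters to the regime $\omega_j\approx N^{-1}$, $\delta_j\approx N^{-1/d}$, and then passing from the resulting $\mathbb{L}^p$ mean bound to a probabilistic statement by Chebychev's inequality. Since here $\varphi(t)=t^{\alpha}$, every quantity on the right-hand sides of (\ref{2}) and (\ref{3}) can be evaluated explicitly in terms of $N$.

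First I would substitute into the two cases of the theorem. In the range $1\leqslant p\leqslant2$ the relevant factors in (\ref{2}) are $\sup\{\omega_j^{1-1/p}\}\approx N^{-(1-1/p)}=N^{1/p-1}$ and $\varphi(2\sup\{\delta_j\})=(2\sup\{\delta_j\})^{\alpha}\approx N^{-\alpha/d}$, so that
\[
\left\{{\int_{\mathbf{X}}}\left\vert\mathcal{E}_{\mathbf{x},\mathbf{\omega}}\left(f\right)\right\vert^{p}d\mathbf{x}\right\}^{1/p}\leqslant c_{1}N^{1/p-1-\alpha/d}\left\Vert f\right\Vert_{\mathbb{B}_{p,\infty}^{\varphi}\left(\mathcal{M}\right)},
\]
where $c_{1}$ collects $2B(p)$, the appropriate powers of the implied constants from the hypotheses, and the factor $2^{\alpha}$. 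In the range $2\leqslant p<+\infty$ the estimate (\ref{3}) involves instead $\sup\{\omega_j^{1/2}\}\approx N^{-1/2}$ together with the fixed factor $\left\vert\mathcal{M}\right\vert^{1/2-1/p}$, which is harmless because $\left\vert\mathcal{M}\right\vert=\sum_j\omega_j$ is finite; this gives
\[
\left\{{\int_{\mathbf{X}}}\left\vert\mathcal{E}_{\mathbf{x},\mathbf{\omega}}\left(f\right)\right\vert^{p}d\mathbf{x}\right\}^{1/p}\leqslant c_{2}N^{-1/2-\alpha/d}\left\Vert f\right\Vert_{\mathbb{B}_{p,\infty}^{\varphi}\left(\mathcal{M}\right)}.
\]

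Next, writing $M$ for the right-hand side of whichever of these two bounds is in force and recalling that $(\mathbf{X},d\mathbf{x})$ is a probability space, I would apply Chebychev's inequality: for every $\lambda>0$,
\[
\left\vert\left\{\mathbf{x}\in\mathbf{X}:\left\vert\mathcal{E}_{\mathbf{x},\mathbf{\omega}}\left(f\right)\right\vert>\lambda\right\}\right\vert\leqslant\lambda^{-p}{\int_{\mathbf{X}}}\left\vert\mathcal{E}_{\mathbf{x},\mathbf{\omega}}\left(f\right)\right\vert^{p}d\mathbf{x}\leqslant\lambda^{-p}M^{p}.
\]
Taking $\lambda=\varepsilon^{-1/p}M$ makes the right-hand side equal to $\varepsilon$, so that with probability greater than $1-\varepsilon$ one has $\left\vert\mathcal{E}_{\mathbf{x},\mathbf{\omega}}\left(f\right)\right\vert\leqslant\varepsilon^{-1/p}M$. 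As $p$, $\varepsilon$, $\alpha$ and $d$ are fixed, the factor $\varepsilon^{-1/p}$ and the constants $c_1$, $c_2$ combine into a single constant $c$, which yields the two displayed estimates.

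I do not expect a genuine obstacle: the argument is a specialization of Theorem~\ref{Thm Besov} followed by a first-moment concentration bound. The only steps needing mild care are the bookkeeping of exponents---verifying that $N^{-(1-1/p)}=N^{1/p-1}$ and $(2\sup\{\delta_j\})^{\alpha}\approx N^{-\alpha/d}$ reproduce the stated rates, and that the two rates agree at $p=2$---together with the remark that every suppressed constant, in particular $B(p)$ and $\varepsilon^{-1/p}$, is harmless precisely because $p$ and $\varepsilon$ are prescribed in advance.
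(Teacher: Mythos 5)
Your proposal is correct and follows exactly the route of the paper, whose entire proof is the single sentence that the corollary follows from Theorem~\ref{Thm Besov} via Chebychev's inequality; your substitution of $\omega_j\approx N^{-1}$, $\delta_j\approx N^{-1/d}$ into (\ref{2}) and (\ref{3}) and the choice $\lambda=\varepsilon^{-1/p}M$ are precisely the omitted bookkeeping.
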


\begin{proof}
This follows from Theorem~\ref{Thm Besov} via Chebyshev's inequality.
 \end{proof}

The following example shows that Theorem \ref{Thm Besov} is essentially sharp.

\begin{example}
As in Theorem \ref{GigLeo}, let $\mathcal{M}$ be a metric measure space of
finite measure with the property that there exist positive constants $H$, $K$,
$d$, such that for every $y\in\mathcal{M}$ and $0<r<\operatorname*{diam}%
\left(  \mathcal{M}\right)  $,%
\[
Hr^{d}\leqslant\left\vert \left\{  x\in\mathcal{M}:\left\vert x-y\right\vert
<r\right\}  \right\vert \leqslant Kr^{d}.
\]
For every $N$, the space $\mathcal{M}$ can be decomposed into a finite
disjoint union of sets in the form $\mathcal{M}=\mathcal{X}_{1}\cup\cdots
\cup\mathcal{X}_{N}$, with $\omega_{j}=\left\vert \mathcal{X}_{j}\right\vert
\approx N^{-1}$ and $\delta_{j}=\operatorname*{diam}\left(  \mathcal{X}%
_{j}\right)  \approx N^{-1/d}$. Moreover every $\mathcal{X}_{j}$ contains a
ball $B\left(  w_{j},r_{j}\right)  $ of centre $w_{j}$ and radius
$r_{j}\approx\delta_{j}$. It is possible to prove that each ball $B\left(
w_{j},r_{j}\right)  $ contains two disjoint balls $B\left(  y_{j},\varepsilon
r_{j}\right)  $ and $B\left(  z_{j},\varepsilon r_{j}\right)  $, with
$\varepsilon=6^{-1}\left(  2K/H\right)  ^{-1/d}$. Fix $1\leqslant j\leqslant
N$ and define%
\[
f_{j}\left(  x\right)  =\left(  1-\frac{2}{\varepsilon r_{j}}\left\vert
x-y_{j}\right\vert \right)  _{+}-\vartheta_{j}\left(  1-\frac{2}{\varepsilon
r_{j}}\left\vert x-z_{j}\right\vert \right)  _{+},
\]
with $\vartheta_{j}$ such that $f$ has mean $0$. Observe that $0<\vartheta
_{j}<C$ independent of $N$. Also note that there exist constants $A$ and $B$
independent of $N$ such that%
\[
\int\left\vert f_{j}\left(  x\right)  \right\vert ^{p}\frac{dx}{\omega_{j}%
}\geqslant A,
\]
and%
\[
\left\vert f_{j}\left(  x\right)  -f_{j}\left(  y\right)  \right\vert
\leqslant B\delta_{j}^{-1}\left\vert x-y\right\vert ~~~~\text{for all }%
x,y\in\mathcal{M}\text{.}%
\]
If $\varphi\left(  t\right)  =t^{\alpha}$ with $d/p<\alpha\leqslant1$, and if
$w_{j}$ is the above defined point in $\mathcal{X}_{j}$, then the function%
\[
g_{j}\left(  x\right)  =c\min\left(  \delta_{j}^{-\alpha},\left\vert
x-w_{j}\right\vert ^{-\alpha}\right)
\]
is a $\varphi$-gradient of $f\left(  x\right)  $. Indeed, if $x\in
\mathcal{X}_{j}$ and $\left\vert x-y\right\vert \leqslant2\delta_{j}$, then%
\[
\left\vert f_{j}\left(  x\right)  -f_{j}\left(  y\right)  \right\vert
\leqslant B\delta_{j}^{-1}\left\vert x-y\right\vert \leqslant c\delta
_{j}^{-\alpha}\left\vert x-y\right\vert ^{\alpha},
\]
while for $x\in\mathcal{X}_{j}$ and $\left\vert x-y\right\vert >2\delta_{j} $,%
\begin{align*}
\left\vert f_{j}\left(  x\right)  -f_{j}\left(  y\right)  \right\vert  &
=\left\vert f_{j}\left(  x\right)  \right\vert \leqslant\max\left\{
1,\vartheta_{j}\right\}  =\max\left\{  1,\vartheta_{j}\right\}  \left\vert
x-y\right\vert ^{-\alpha}\left\vert x-y\right\vert ^{\alpha}\\
& \leqslant c\left\vert w_{j}-y\right\vert ^{-\alpha}\left\vert x-y\right\vert
^{\alpha}.
\end{align*}
In particular,%
\[
\left\Vert f_{j}\right\Vert _{\dot{\mathbb{B}}_{p,\infty}^{\varphi}\left(
\mathcal{M}\right)  }\leqslant c\left\{  \int_{\mathcal{M}}\min\left(
\delta_{j}^{-\alpha p},\left\vert x-w_{j}\right\vert ^{-\alpha p}\right)
dx\right\}  ^{1/p}\leqslant cN^{\alpha/d-1/p}.
\]
Moreover, since $f(x)$ has mean zero and it is supported in $\mathcal{X}_{j}$,%
\[
\left\{  \int_{\mathbf{X}}\left\vert {\sum_{k=1}^{N}}\omega_{k}f_{j}\left(
x_{k}\right)  -{\int_{\mathcal{M}}}f_{j}\left(  x\right)  dx\right\vert
^{p}d\mathbf{x}\right\}  ^{1/p}=\omega_{j}\left\{  \int_{\mathcal{X}_{j}%
}\left\vert f_{j}\left(  x_{j}\right)  \right\vert ^{p}\frac{dx_{j}}%
{\omega_{j}}\right\}  ^{1/p}\geqslant cN^{-1}.
\]
Finally,%
\[
N^{-1}=N^{1/p-1}N^{-\alpha/d}N^{\alpha/d-1/p}\geqslant c\sup\left\{
\omega_{j}^{1-1/p}\right\}  \varphi\left(  2\sup\left\{  \delta_{j}\right\}
\right)  \left\Vert f_{j}\right\Vert _{\dot{\mathbb{B}}_{p,\infty}^{\varphi}\left(
\mathcal{M}\right)  }.
\]
This shows that Theorem \ref{Thm Besov} with $1\leqslant p\leqslant2$ is sharp.

In order to show that the theorem is essentially sharp also for $2<p\leqslant
+\infty$, let $f_{j}\left(  x\right)  $ as before and define
\[
f\left(  x\right)  ={\sum_{j=1}^{N}}f_{j}\left(  x\right)  .
\]
If $\varphi\left(  t\right)  =t^{\alpha}$, then a $\varphi$-gradient of
$f\left(  x\right)  $ is given by
\[
g\left(  x\right)  =cN^{\alpha/d}.
\]
Indeed, if $x,y\in\mathcal{X}_{j}$, then
\[
\left\vert f\left(  x\right)  -f\left(  y\right)  \right\vert =\left\vert
f_{j}\left(  x\right)  -f_{j}\left(  y\right)  \right\vert \leqslant
B\delta_{j}^{-1}\left\vert x-y\right\vert \leqslant cN^{\alpha/d}\left\vert
x-y\right\vert ^{\alpha}.
\]
If $x\in\mathcal{X}_{i}$ and $y\in\mathcal{X}_{j}$, with $i\neq j$ and
$\left\vert x-y\right\vert \leqslant N^{-1/d}$, then
\[
\left\vert f\left(  x\right)  -f\left(  y\right)  \right\vert =\left\vert
f_{i}\left(  x\right)  -f_{j}\left(  y\right)  \right\vert \leqslant\left\vert
f_{i}\left(  x\right)  -f_{i}\left(  y\right)  \right\vert +\left\vert
f_{j}\left(  y\right)  -f_{j}\left(  x\right)  \right\vert \leqslant
cN^{\alpha/d}\left\vert x-y\right\vert ^{\alpha}.
\]
If $\left\vert x-y\right\vert \geqslant N^{-1/d}$, then
\[
\left\vert f\left(  x\right)  -f\left(  y\right)  \right\vert \leqslant
c\leqslant cN^{\alpha/d}\left\vert x-y\right\vert ^{\alpha}.
\]
This gives
\[
\left\Vert f\right\Vert _{\dot{\mathbb{B}}_{p,\infty}^{\varphi}\left(
\mathcal{M}\right)  }\leqslant cN^{\alpha/d}.
\]
Moreover, the Marcinkiewicz--Zygmund inequality gives for $2<p<+\infty$,
\begin{align*}
&  \left\{  \int_{\mathbf{X}}\left\vert {\sum_{j=1}^{N}}\omega_{j}f\left(
x_{j}\right)  -{\int_{\mathcal{M}}}f\left(  x\right)  dx\right\vert
^{p}d\mathbf{x}\right\}  ^{1/p}\geqslant A\left(  p\right)  \left\{
\int_{\mathbf{X}}\left(  {\sum_{j=1}^{N}}\left\vert \omega_{j}f_{j}\left(
x_{j}\right)  \right\vert ^{2}\right)  ^{p/2}d\mathbf{x}\right\}  ^{1/p}\\
&  \geqslant A\left(  p\right)  \left\{  \int_{\mathbf{X}}\left(  {\sum
_{j=1}^{N}}\left\vert \omega_{j}f_{j}\left(  x_{j}\right)  \right\vert
^{2}\right)  d\mathbf{x}\right\}  ^{1/2}\geqslant A\left(  p\right)  \left\{
{\sum_{j=1}^{N}}\omega_{j}\int_{\mathcal{X}_{j}}\left\vert f_{j}\left(
x_{j}\right)  \right\vert ^{2}dx_{j}\right\}  ^{1/2}\\
&  \geqslant A\left(  p\right)  \min\left\{  \omega_{j}^{1/2}\right\}
\left\{  \int_{\mathcal{M}}\left\vert f\left(  x\right)  \right\vert
^{2}dx\right\}  ^{1/2}\geqslant cN^{-1/2}.
\end{align*}
Finally,
\[
N^{-1/2}=N^{-1/2}N^{-\alpha/d}N^{\alpha/d}\geqslant c\sup\left\{  \omega
_{j}^{1/2}\right\}  \varphi\left(  2\sup\left\{  \delta_{j}\right\}  \right)
\left\Vert f\right\Vert _{\dot{\mathbb{B}}_{p,\infty}^{\varphi}\left(
\mathbb{T}^{d}\right)  }.
\]
In particular, this estimate shows that Theorem \ref{Thm Besov} with
$2<p<+\infty$ is sharp.

When $p=+\infty$ let $f\left(  x\right)  $ and $\left\{  y_{j}\right\}  $ as
before, so that $f\left(  y_{j}\right)  =1$. Then
\[
\left\vert {\sum_{j=1}^{N}}\omega_{j}f\left(  y_{j}\right)  -{\int%
_{\mathcal{M}}}f\left(  x\right)  dx\right\vert =1\geqslant c\varphi\left(
2\sup\left\{  \delta_{j}\right\}  \right)  \left\Vert f\right\Vert
_{\dot{\mathbb{B}}_{p,\infty}^{\varphi}\left(  \mathcal{M}\right)  }.
\]
In particular, this estimate shows that Theorem \ref{Thm Besov} with
$p=+\infty$ is sharp.
\end{example}

\section{\label{discr} Discrepancy}

The following result on the  expected value of the $p$th power of the discrepancy of a random set of points
 with respect to a fixed given set in a measure space extends a result in
\cite[Lemma 5]{C}. The result in the latter paper concerns the case of a compact convex 
set in the $d$-dimensional unit cube, and the proof
is based on the combinatorial argument described in the Introduction, which in our case
is replaced by the Marcinkiewicz-Zygmund inequality.

\begin{theorem}
\label{theo:discrepancy} Assume that a metric measure space $\mathcal{M}$ is
decomposed into a finite disjoint union of sets in the form $\mathcal{M=X}%
_{1}\cup\cdots\cup\mathcal{X}_{N}$, and call $\omega_{j}=\left\vert
\mathcal{X}_{j}\right\vert $ and $\delta_{j}=\operatorname*{diam}\left(
\mathcal{X}_{j}\right) $. Let $\mathcal{B}$ be a measurable subset of
$\mathcal{M},$ and let
\[
\psi_{\mathcal{B}}\left(  t\right)  =\left\vert \left\{  x\in\mathcal{B}%
:\operatorname*{dist}\left\{  x,\mathcal{M}\setminus\mathcal{B}\right\}
\leqslant t\right\}  \right\vert +\left\vert \left\{  x\in\mathcal{M}%
\setminus\mathcal{B}:\operatorname*{dist}\left\{  x,\mathcal{B}\right\}
\leqslant t\right\}  \right\vert .
\]

If $\mathcal{J}$ is the set of indices $j$ such that $\mathcal{X}_{j}$
intersects both $\mathcal{B}$ and its complement, then the following hold:

\begin{description}
\item[\textrm{(i)}] For every choice of points $\left\{  x_{j}\right\}  $ in
$\left\{  \mathcal{X}_{j}\right\} ,$
\[
\left\vert {\displaystyle\sum_{j=1}^{N}} \omega_{j}\chi_{\mathcal{B}}\left(
x_{j}\right)  -\left\vert \mathcal{B}\right\vert \right\vert \leq
\psi_{\mathcal{B}}\left(  \sup_{j\in\mathcal{J}}\left\{  \delta_{j}\right\}
\right)  .
\]

\item[\textrm{(ii)}] For every $1\leqslant p<+\infty,$
\begin{gather*}
\left\{  {\displaystyle\int_{\mathbf{X}}} \left\vert {\displaystyle\sum
_{j=1}^{N}} \omega_{j}\chi_{\mathcal{B}}\left(  x_{j}\right)  -\left\vert
\mathcal{B}\right\vert \right\vert ^{p}{d\mathbf{x}}\right\}  ^{1/p} \leq
B\left(  p\right)  \sqrt{\sup_{j\in\mathcal{J}}\left\{  \omega_{j}\right\}
\psi_{\mathcal{B}}\left(  \sup_{j\in\mathcal{J}}\left\{  \delta_{j}\right\}
\right)  }.
\end{gather*}

\end{description}
\end{theorem}

Observe that the right hand side in (ii) is better than the one in (i) when
\[
B\left(  p\right)  \leq\sqrt{\frac{\psi\left(  \sup_{j\in\mathcal{J}
}\left\{  \delta_{j}\right\}  \right)  }{\sup_{j\in\mathcal{J}}\left\{
\omega_{j}\right\}  }}.
\]
Also observe that
\[
\sup_{j\in\mathcal{J}}\left\{  \omega_{j}\right\}  \leq\sum_{j\in\mathcal{J}%
}\omega_{j}\leq\psi_{\mathcal{B}}\left(  \sup_{j\in\mathcal{J}}\left\{
\delta_{j}\right\}  \right).
\]
For a sufficiently refined decomposition $\{\mathcal X_j\}$ of the space, 
one should expect $\sup_{j\in\mathcal{J}}\left\{  \omega_{j}\right\}$ to be
much smaller than $\psi_{\mathcal{B}}\left(  \sup_{j\in\mathcal{J}}\left\{
\delta_{j}\right\}  \right)$.
Hence, for a fixed value of $p$, estimate (ii) is in general better than (i) as $N\to+\infty$.
On the other hand, recall that $B\left(  p\right)  \rightarrow+\infty$ as
$p\rightarrow+\infty$, hence, for a fixed decomposition of $\mathcal M$, estimate (i) wins
for sufficiently large values of $p$.

\begin{proof}
The proof of (i) is elementary. For every choice of $x_{j}%
\in\mathcal{X}_{j}$ one has
\[%
{\displaystyle\sum_{j=1}^N}
\omega_{j}\chi_{\mathcal{B}}\left(  x_{j}\right)  -\left|  \mathcal{B}\right|
=%
{\displaystyle\sum_{j=1}^N}
\left(  \omega_{j}\chi_{\mathcal{B}\cap\mathcal{X}_{j}}\left(  x_{j}\right)
-\left|  \mathcal{B}\cap\mathcal{X}_{j}\right|  \right)  .
\]
If $\mathcal{X}_{j}\subseteq\mathcal{B}$ or if $\mathcal{B}\cap\mathcal{X}%
_{j}=\emptyset$ then $\omega_{j}\chi_{\mathcal{B}\cap\mathcal{X}_{j}}\left(
x_{j}\right)  -\left\vert \mathcal{B}\cap\mathcal{X}_{j}\right\vert =0$.
Moreover, for every $j$,%
\[
\left\vert \omega_{j}\chi_{\mathcal{B}\cap\mathcal{X}_{j}}\left(
x_{j}\right)  -\left\vert \mathcal{B}\cap\mathcal{X}_{j}\right\vert
\right\vert \leq\omega_{j}.%
\]
Then, by
the triangle inequality,%
\begin{gather*}
\left\vert
{\displaystyle\sum_{j=1}^N}
\left(  \omega_{j}\chi_{\mathcal{B}\cap\mathcal{X}_{j}}\left(  x_{j}\right)
-\left\vert \mathcal{B}\cap\mathcal{X}_{j}\right\vert \right)  \right\vert
\leq%
{\displaystyle\sum_{j\in\mathcal J}}
\omega_{j}
\leq\psi_{\mathcal B}\left(  \sup_{j\in\mathcal{J}}\left\{  \delta_{j}\right\}
\right).
\end{gather*}
The proof of (ii) is similar, with the crucial difference that we replace the triangle inequality
with the Marcinkiewicz-Zygmund inequality (Corollary \ref{Cor-MZ}),
\begin{align*}
&\left\{
{\displaystyle\int_{\mathbf{X}}}
\left\vert
{\displaystyle\sum_{j=1}^N}
\left(  \omega_{j}\chi_{\mathcal{B}\cap\mathcal{X}_{j}}\left(  x_{j}\right)
-\left\vert \mathcal{B}\cap\mathcal{X}_{j}\right\vert \right)  \right\vert
^{p}{d\mathbf{x}}\right\}
^{1/p}\\
&\leqslant B\left(  p\right)  \left\{
{\displaystyle\sum_{j\in\mathcal{J}}}
\omega_{j}^{2}\right\}  ^{1/2}\leqslant B\left(  p\right)  \sup_{j\in\mathcal{J}%
}\left\{  \omega_{j}^{1/2}\right\}  \left\{
{\displaystyle\sum_{j\in\mathcal{J}}}
\omega_{j}\right\}  ^{1/2}\\
&\leqslant B\left(  p\right)\sqrt{  \sup_{j\in\mathcal{J}}\left\{  \omega_{j}%
\right\}  \psi_{\mathcal B}\left(  \sup_{j\in\mathcal{J}}\left\{  \delta_{j}\right\}\right)}.\,\,\,
\end{align*}
 \end{proof}

Theorems like the above are the main building block for the proof 
of the existence of point distributions with small $L^p$ discrepancy
with respect to given collections of subsets. A
very general result of this type is the following.

\begin{corollary}
\label{discrepanza_p}
Assume that a metric measure space $\mathcal{M}$ can be decomposed into a
finite disjoint union of sets in the form $\mathcal{M=X}_{1}\cup\cdots
\cup\mathcal{X}_{N}$, with $\omega_{j}=\left\vert \mathcal{X}_{j}\right\vert
\approx N^{-1}$ and $\delta_j=\operatorname*{diam}\left(  \mathcal{X}_{j}\right)
\approx N^{-1/d}$. Let ${\mathbb{G}}$ be a collection of measurable subsets of
$\mathcal{M}$ with the property that there exist positive constants $c$ and $\beta$
such that for all sets $\mathcal{G}\in{\mathbb{G}}$
\[
\psi_{\mathcal{G}}\left(  t\right)  =\left\vert \left\{  x\in{\mathcal{G}%
}:\operatorname*{dist}\left\{  x,\mathcal{M}\setminus{\mathcal{G}}\right\}
\leqslant t\right\}  \right\vert +\left\vert \left\{  x\in\mathcal{M}%
\setminus{\mathcal{G}}:\operatorname*{dist}\left\{  x,{\mathcal{G}}\right\}
\leqslant t\right\}  \right\vert \leqslant ct^{\beta}.
\]
Then for any  finite positive measure $\mu$ on any sigma algebra
on ${\mathbb{G}}$, and for every $1\leqslant p<+\infty$ there exists a constant $C$ and a choice
of points $\left\{  x_{j}\right\}  $ in $\left\{  \mathcal{X}_{j}\right\}  $
such that
\[
\left(  \int_{\mathbb{G}}\left\vert {\sum_{j=1}^{N}}\omega_{j}\chi
_{{\mathcal{G}}}\left(  x_{j}\right)  -\left\vert {\mathcal{G}}\right\vert
\right\vert ^{p}\,d\mu(\mathcal{G})\right)  ^{1/p}\leqslant CN^{-1/2-\beta
/2d}.
\]
\end{corollary}

\begin{proof}
By point {\rm (ii)} of Theorem \ref{theo:discrepancy},
\begin{align*}
&{\displaystyle\int_\mathbb{G}\int_{\mathbf{X}}}
\left\vert
{\displaystyle\sum_{j=1}^N}
\omega_{j}\chi_{\mathcal G}\left(  x_{j}\right)  -\left\vert {\mathcal G}%
\right\vert \right\vert ^{p}{d\mathbf {x}}d\mu(\mathcal G)\\
&\leqslant B\left(  p\right)^p  {\displaystyle\int_\mathbb{G}}\left({\sup_{j\in\mathcal{J}}\left\{  \omega
_{j}\right\}  \psi_{{\mathcal G}}\left(  \sup_{j\in\mathcal{J}}\left\{  \delta_{j}\right\}
\right)  }\right)^{p/2}d\mu(\mathcal G)\\
&\leqslant B\left(  p\right)^p  \mu(\mathbb {G})\left({\sup_{j\in\mathcal{J}}\left\{  \omega
_{j}\right\}  c\left(  \sup_{j\in\mathcal{J}}\left\{  \delta_{j}\right\}
\right)^\beta  }\right)^{p/2}.
\end{align*}
This implies that there exists an $\mathbf x\in\mathbf X$ such that 
the thesis of the theorem holds.
 \end{proof}

We emphasize that under the hypotheses of Corollary
\ref{GigLeo} the required decomposition exists, and it is always possible to take all $\omega_j$ equal to $|\mathcal M|N^{-1}$. 
The corollary  has several possible applications. We now examine a few particular cases,
starting with the isotropic discrepancy (the discrepancy with respect to convex sets)
in the unit cube $[0,1]^{d}$. 

\begin{corollary}
Let $1\leqslant p<+\infty$, and let $\mu$ be a finite positive measure on a
sigma algebra on the collection ${\mathcal{K}}^{d}_{u}$ of all convex sets of
the unit cube $[0,1]^{d}.$ For any integer $N$ there exists a distribution of
points $\{x_{j}\}_{j=1}^{N}$ in $[0,1]^{d}$ such that%

\[
\left( \int_{\mathcal{K}^{d}_{u}}\left\vert \frac1N{\sum_{j=1}^{N}}\chi_{{K}%
}\left(  x_{j}\right)  -\left\vert {K}\right\vert \right\vert ^{p}%
\,d\mu(K)\right) ^{1/p} \leqslant CN^{-1/2-1/2d}.
\]
\end{corollary}

\begin{proof}
It suffices to show that Corollary \ref{discrepanza_p} applies with $\beta=1$.
First of all, the unit cube can be decomposed into a 
finite disjoint union of sets in the form $\mathcal{M=X}_{1}\cup\cdots
\cup\mathcal{X}_{N}$, with $\omega_{j}=\left\vert \mathcal{X}_{j}\right\vert
= N^{-1}$ and $\delta_j=\operatorname*{diam}\left(  \mathcal{X}_{j}\right)
\approx N^{-1/d}$. See, for example, Theorem \ref{GigLeo}.
It then suffices to observe that
for all convex sets $K$ 
the uniform estimate $\psi_{K}(t)\leqslant4dt$ holds. Indeed, by the coarea formula
\[
\psi_K(t)=|\{x\in[0,1]^d:{\mathrm {dist}}(x,\partial K)\leqslant t\}|
=\int_{-t}^t |\partial K_u|_{d-1}du,
\]
where for $u>0$ we define $K_u=\{x\in K:{\mathrm {dist}}(x,\partial K)\geqslant u\}$
and for $u<0$ we define $K_u=\{x\in [0,1]^d:{\mathrm {dist}}(x, K)\leqslant |u|\}$,
and $|\cdot|_{d-1}$ is the $(d-1)$-dimensional Hausdorff measure.
It is a well known property of convex sets that all sets $K_u$ are convex
(see e.g. \cite[Chapter 3]{Schneider}.
By the Archimedean property of monotonicity with respect to inclusion 
of the measure of the boundary of convex sets (see \cite[Property 5, page 52]{BF}), from $K_u\subset [0,1]^d$
it follows $|\partial K_u|_{d-1}\leqslant |\partial[0,1]^d|_{d-1}=2d.$
 \end{proof}

As an explicit example of measure, one could consider any finite measure supported
on the translated, rotated and dilated copies of a fixed convex set.
Hence, this result includes and extends well known results on the $L^{p}$ discrepancy
with respect to discs, or other collections of sets with ``reasonable''
shapes (see for example \cite[Theorem 2D]{C}). It is interesting to observe 
that if one replaces the above $L^p$ norm with a supremum in $K\in\mathcal{K}^{d}_{u}$,
then the above result fails. Indeed Schmidt (see \cite{Sch})
proved that for any $N$ point distribution in the unit cube
there exists a convex set with discrepancy of order $N^{-2/(d+1)}$.

It is perhaps worth mentioning some results about measures on the space of
convex sets. It is well known that the set of nonempty convex compact subsets
of ${\mathbb{R}}^{d},$ let us call it $\mathcal{K}^{d},$ can be made into a
metric space by introducing the Hausdorff distance
\[
d_{H}(A,B)=\max\{\sup_{a\in A}\,\inf_{b\in B}|a-b|,\sup_{b\in B}\,\inf_{a\in
A}|a-b|\}.
\]
A large class of sigma finite Borel measures on $\mathcal{K}^{d},$ which are
positive on open sets of $\mathcal{K}^{d}$ and are invariant under rigid
motions, has been recently constructed by L. M. Hoffmann (\cite{Hof}).
Just to give a rough idea,
let $\{K_{n}\}_{n=1}%
^{+\infty}$ be an enumeration of all polytopes of ${\mathbb{R}}^{d}$ with
vertices in rational points, and let $\sum_{n=1}^{+\infty}\alpha_{n}<+\infty$
be a convergent series with positive terms. Then one can define the measure
\[
\mu=\sum_{n=1}^{+\infty}\alpha_{n}\delta_{K_{n}},
\]
where $\delta_{K_{n}}$ is the Dirac delta centered at $K_{n}$.
This measure is supported on rational polytopes, but is positive on open sets since these rational polytopes
are dense. A suitable 
clever modification can be made invariant under rigid motions.  Nevertheless,
it can be shown that there are more isometries of the space $\mathcal{K}^{d}$
than those coming from rigid motions of $\mathbb{R}^{d}$, and in fact it has
been showed by Bandt and Baraki (see \cite{BB}) that for $d>1$ there are no
nontrivial sigma finite measures on $\mathcal{K}^{d},$ that are invariant with
respect to all isometries of the whole space $\mathcal{K}^{d}$. Hence it seems that 
there is no ``natural'' measure on $\mathcal{K}^{d}$.

Similar results hold in compact Riemannian manifolds. 
For the sake of simplicity, we state here a result on the $L^p$ discrepancy associated to geodesic balls.
\begin{corollary}
Let $\mathcal M$ be a compact Riemannian manifold with injectivity radius $r_0$, and let $0<r_1<r_0$. Denote by 
$B(x,r)$ the geodesic ball centered at the point $x\in\mathcal M$ with radius $r$. Then
for any $1\leqslant p<+\infty$, for any finite positive measure $\mu$ on $\mathcal M\times (0,r_1 )$, and for any integer $N$, there exists a distribution of
points $\{x_{j}\}_{j=1}^{N}$ in $\mathcal M$ such that%
\[
\left( \iint_{\mathcal M\times(0,r_1)}\left\vert \frac{|\mathcal M|}N{\sum_{j=1}^{N}}\chi_{{B(x,r)}%
}\left(  x_{j}\right)  -\left\vert {B(x,r)}\right\vert \right\vert ^{p}%
\,d\mu(x,r)\right) ^{1/p} \leqslant CN^{-1/2-1/2d}.
\]
\end{corollary}

\begin{proof} As before, it suffices to show that Corollary \ref{discrepanza_p} applies with $\beta=1$.
Indeed $\mathcal M$ can be decomposed into a 
finite disjoint union of sets in the form $\mathcal{M=X}_{1}\cup\cdots
\cup\mathcal{X}_{N}$, with $\omega_{j}=\left\vert \mathcal{X}_{j}\right\vert
=|\mathcal M| N^{-1}$ and $\delta_j=\operatorname*{diam}\left(  \mathcal{X}_{j}\right)
\approx N^{-1/d}$. See, for example, Theorem \ref{GigLeo}.
Moreover,  there exists a positive constant $c$ 
such that for every geodesic ball $B(x,r)$ with $r<r_1$ and every $t>0$ one has
$
\psi_{B(x,r)}\left(  t\right)   \leqslant ct
$.
It clearly suffices to prove this for all $0<t\leqslant (r_0-r_1)/2.$ Indeed, by the triangle inequality,
\begin{align*}
 &\left\{  y\in{B(x,r)%
}:\operatorname*{dist}\left\{  y,\mathcal{M}\setminus{B(x,r)}\right\}
\leqslant t\right\}  \subset B(x,r)\setminus B(x,r-t), \\
 & \left\{ y\in\mathcal{M}%
\setminus{B(x,r)}:\operatorname*{dist}\left\{ y,{B(x,r)}\right\}
\leqslant t\right\} \subset \overline B(x,r+t)\setminus B(x,r).
\end{align*}
If $s\leqslant 0$ then we set $B(x,s)=\emptyset$. Finally, if $r<r_1$ and $t<(r_0-r_1)/2$, the 
exponential map diffeomorphically maps the
annulus $\overline B(x,r+t)\setminus B(x,r-t)$ 
into the tangent space in $x$, and by a uniform bound on the Jacobian of
the exponential map, its measure is bounded above
by $c((r+t)^d-\max\{0,r-t\}^d)\leqslant ct $.
 \end{proof}

We have already mentioned that the above results fail in general
when one replaces the $L^p$ norm with a supremum. Nevertheless, 
an extra hypothesis concerning the complexity of the collection of sets $\mathbb G$
allows to obtain the same upper bound in the supremum case too, up to a logarithmic transgression. 
\begin{theorem}
\label{infinity}
Let $\mathcal{M}$ be a metric measure space with
finite measure with the property that there exist positive constants $d$ and $c_{1}$, such that for every
sufficiently large $N$ there exists a partition $\mathcal{M}=\mathcal{X}%
_{1}\cup\cdots\cup\mathcal{X}_{N}$ with
$\left|  \mathcal{X}_{j}\right|  = \left|  \mathcal{M} \right| N^{-1}$ and
$
{\mathrm{diam}}\left(\mathcal{X}_{j}\right)\leqslant c_{1}N^{-1/d}.
$
Let ${\mathbb{G}}$ be a collection of measurable subsets of
$\mathcal{M}$ with the following two properties:
\begin{description}
\item[\textrm{(i)}] There exist positive constants $c_2$ and $\beta$
such that for all sets $\mathcal{G}\in{\mathbb{G}}$
\[
\psi_{\mathcal{G}}\left(  t\right)  =\left\vert \left\{  x\in{\mathcal{G}%
}:\operatorname*{dist}\left\{  x,\mathcal{M}\setminus{\mathcal{G}}\right\}
\leqslant t\right\}  \right\vert +\left\vert \left\{  x\in\mathcal{M}%
\setminus{\mathcal{G}}:\operatorname*{dist}\left\{  x,{\mathcal{G}}\right\}
\leqslant t\right\}  \right\vert \leqslant c_2t^{\beta}.
\]
\item[\textrm{(ii)}] There exist positive constants $c_3$ and $\gamma$ such that 
for all integers $N$ and for all distributions $\mathcal P$ of $N$ points in $\mathcal M$
there are at most $c_3N^\gamma$ equivalence classes in $\mathbb G$, where $\mathcal G, \mathcal G'$ in
$\mathbb G$ are defined to be equivalent if $\mathcal P\cap\mathcal G=\mathcal P\cap\mathcal G'$.
\end{description}
Then for any integer $N$ there exists a distribution of
points $\{z_{j}\}_{j=1}^{N}$ such that
\[
\sup_{\mathcal G\in\mathbb G}\left\vert \frac{|\mathcal M|}N{\sum_{j=1}^{N}}\chi_{{\mathcal G}%
}\left(  z_{j}\right)  -\left\vert {\mathcal G}\right\vert \right\vert\leqslant CN^{-1/2-\beta/2d}\sqrt{\log N}.
\]
\end{theorem}
A few words on the above condition {\textrm(ii)} are perhaps necessary. We say that 
$\mathbb G$ shatters a finite subset $\mathcal P$ of $N$ points of $\mathcal M$ if there are
exactly $2^N$ distinct intersections of sets of $\mathbb G$ with $\mathcal P$. 
The Vapnik Chervonenkis dimension, or VC-dimension, of $\mathbb G$ is the supremum of the sizes of all finite subsets of $\mathcal M$
that are shattered by $\mathbb G$. By Sauer lemma (see \cite{Sauer}), condition {\textrm(ii)} coincides with
asking that the collection $\mathbb G$ has finite VC-dimension.
For example, the collection of convex sets in $\mathbb R^d$ has infinite VC-dimension.
Indeed a set of $N$ points on a circle can be easily shattered with convex sets.
On the other hand, the collection of balls in $\mathbb R^d$ has VC-dimension
$d+1$ (see \cite[Chapter 5]{M} for an account on this subject).

The proof that we present here follows closely the lines of the classic result for discs in the 
unit square, as one can find in J. Matou\v{s}ek' s book \cite{M}.
\begin{proof} 
Let $M=N^{q}$, where $q$ is a positive integer that will be fixed later.
Consider two partitions of $\mathcal M$ as in the hypothesis. The first is composed
by the $N$ sets $\mathcal X_1,\ldots,\mathcal X_N$, 
and the second is composed by the $M$ sets $\mathcal Y_1,\ldots,\mathcal Y_M$.
For any $j=1,\ldots,N$, define $I_j=\{i=1,\ldots,M:\mathcal Y_i\cap\mathcal X_j\neq\emptyset\}$ and, for all $i\in I_j$ define 
\[
\mathcal {Y}_{j,i}=\mathcal X_j\cap\mathcal Y_i.
\]
Fix a point $y_{j,i}$ in any of the sets $\mathcal {Y}_{j,i}$. Clearly $\{\mathcal {Y}_{j,i}\}_{i\in I_j}$ forms a partition of $\mathcal X_j$
and $\mathcal {Y}_{j,i}\subset\{x\in\mathcal M:|x-y_{i,j}|<c_1M^{-1/d}\}$. 

For each $j=1,\ldots, N$, let us pick one point $q_j$ among all the points $y_{j,i}$ with  $i\in I_j$. This point $q_j$ is chosen
randomly with probability ${\mathbb P}[q_j=y_{j,i}]=N|\mathcal Y_{j,i}|/|\mathcal M|$, the choices being independent for distinct values of $j$.
The discrepancy of the point distribution $\{q_j\}_{j=1}^N$ with respect to a given $\mathcal G\in\mathbb G$ can be estimated
as follows
\begin{align*}
&\left\vert \frac{|\mathcal M|}N{\sum_{j=1}^{N}}\chi_{{\mathcal G}%
}\left(  q_{j}\right)  -  \left\vert {\mathcal G}\right\vert \right\vert\\
&\leqslant  \left\vert \frac{|\mathcal M|}N\sum_{j=1}^{N}\left(\chi_{\mathcal G}%
\left(  q_{j}\right)  - \frac N{|\mathcal M|}\sum_{i\in I_j}|\mathcal Y_{j,i}|\chi_{{\mathcal G}%
}\left(  y_{j,i}\right)\right)\right\vert
+\left\vert  {\sum_{j=1}^{N}}\sum_{i\in I_j}|\mathcal Y_{j,i}|\chi_{{\mathcal G}%
}\left(  y_{j,i}\right)  -\left\vert {\mathcal G}\right\vert \right\vert
\end{align*}
The second term in the above sum is deterministic and can be treated easily. 
By Theorem \ref{theo:discrepancy} (i) it is bounded above by
\[
\psi_{\mathcal G}\left(c_1M^{-1/d}\right)\leqslant c_2\left(c_1M^{-1/d}\right)^\beta= c_1^\beta c_2 N^{-\beta q/d}.
\]
It is therefore sufficient to take $q\geqslant  d/\beta$ to obtain an estimate better than what is needed.

The other term is of a probabilistic nature. We only need to consider the 
values of $j$ for which $\mathcal X_j$ intersects both $\mathcal G$ and $\mathcal M$.
Call this set $\mathcal J$ and its cardinality $m$. Since
\[
m\frac{|\mathcal M|}N\leqslant\psi_{\mathcal G}\left(c_1N^{-1/d}\right)\leqslant  c_1^\beta c_2 N^{-\beta/d},
\]
we have $m\leqslant c_1^\beta c_2 N^{1-\beta/d}/|\mathcal M|$.
Let us now set 
\[
k_j=\frac N{|\mathcal M|}\sum_{i\in I_j}|\mathcal Y_{j,i}|\chi_{{\mathcal G}%
}\left(  y_{j,i}\right),
\] 
and call $F_j$ the random variable
$
\chi_{\mathcal G}
\left(q_{j}\right)  - k_j.
$
Thus we have
\[
\frac{|\mathcal M|}N\sum_{j=1}^{N}\left(\chi_{\mathcal G}%
\left(  q_{j}\right)  - \frac N{|\mathcal M|}\sum_{i\in I_j}|\mathcal Y_{j,i}|\chi_{{\mathcal G}%
}\left(  y_{j,i}\right)\right)=\frac{|\mathcal M|}N\left(\sum_{j\in\mathcal J}F_j\right).
\]
The variables $F_j$ are mutually independent, and $F_j$ takes the value
$1-k_j$ with probability $k_j$, and $-k_j$ with probability $1-k_j$. Therefore 
for every $\Delta >0$ we have
\[
{\mathbb P}\left[\left|\sum_{j\in\mathcal J}F_j\right|\geqslant\Delta\right]\leqslant 2\exp(-2\Delta^2/m)
\]
(see, for example,
\cite[Corollary A.1.7]{AS}).
Let us fix a constant $C>0$. Then we have showed that
\begin{align*}
&{\mathbb P}\left[\frac{|\mathcal M|}N\left|\sum_{j\in\mathcal J}F_j\right|\geqslant CN^{-1/2-\beta/(2d)}\sqrt{\log N}\right]\\
&={\mathbb P}\left[\left|\sum_{j\in\mathcal J}F_j\right|\geqslant  CN^{1/2-\beta/(2d)}\sqrt{\log N}/|\mathcal M|\right]\\
&\leqslant 2\exp(-2C^2N^{1-\beta/d}\log N/|\mathcal M|^2)/m)\\
&\leqslant 2N^{-C^2c_1^{-\beta} c_2^{-1}|\mathcal M|^{-1}}.
\end{align*}
Finally, if $\mathbb F\subset \mathbb G$ contains 
one representative for each equivalence class, then
\begin{align*}
&{\mathbb P}\left[\frac{|\mathcal M|}N\left|\sum_{j\in\mathcal J}F_j\right|\geqslant  CN^{-1/2-\beta/(2d)}\sqrt{\log N}\text{ for some }\mathcal G\in\mathbb G\right]\\
&={\mathbb P}\left[\frac{|\mathcal M|}N\left|\sum_{j\in\mathcal J}F_j\right|\geqslant  CN^{-1/2-\beta/(2d)}\sqrt{\log N}\text{ for some }\mathcal G\in\mathbb F\right]\\
&\leqslant\sum_{\mathcal G\in\mathbb F}{\mathbb P}\left[\frac{|\mathcal M|}N\left|\sum_{j\in\mathcal J}F_j\right|\geqslant  CN^{-1/2-\beta/(2d)}\sqrt{\log N}\right]\\
&\leqslant 2c_3N^{\gamma-C^2c_1^{-\beta} c_2^{-1}|\mathcal M|^{-1}}<1
\end{align*}
if $C$ is large enough. The theorem follows.
 \end{proof}

The next Corollary shows one possible application of the above theorem.
\begin{corollary}
Let $\mathcal M$ be a $d$-dimensional compact Riemannian manifold isometrically embedded in $\mathbb R^D$,
and call $B^D(x,r)=\{y\in\mathbb R^D:\|y-x\|<r\}$ the Euclidean $D$-dimensional ball
of center $x$ and radius $r$.
Then there exist positive constants $r_0$ and $C$ such that for any integer $N$ there exists a distribution of
points $\{z_{j}\}_{j=1}^{N}\in\mathcal M$ such that
\[
\sup_{r\leqslant r_0, x\in\mathcal M}\left\vert \frac{|\mathcal M|}N{\sum_{j=1}^{N}}\chi_{{B^D(x,r)\cap\mathcal M}%
}\left(  z_{j}\right)  -\left\vert {B^D(x,r)\cap\mathcal M}\right\vert \right\vert\leqslant CN^{-1/2-1/2d}\sqrt{\log N}.
\]

\end{corollary}

Notice that, by the Nash embedding theorem, every Riemannian manifold can be isometrically
embedded into some Euclidean space. In particular, if one takes  as $\mathcal M$ the $d$-dimensional unit sphere in $\mathbb R^{d+1}$,
then the sets  $B^{d+1}(x,r)\cap\mathcal M$ of the above corollary coincide with the usual spherical caps, and one 
recovers Beck's estimate for the spherical cap discrepancy (see \cite[Theorem 24D]{BCbook}).
\begin{proof}
It is enough to show that the collection of subsets of the form $B^D(x,r)\cap\mathcal M$
satisfies the two hypotheses of Theorem \ref{infinity}.
By compactness, there exists a positive number $r_0$ such that for all
$0<r\leqslant r_0$ and for all $x\in\mathcal M$, the set $\mathcal N=\{y\in\mathcal M:\|x-y\|=r\}$
is a hypersurface of $\mathcal M$ with uniformly bounded $(d-1)$-dimensional volume. 
Furthermore, the measure 
of the set of points of $\mathcal M$ with geodesic distance from 
$\mathcal N$ less than or equal to $t$ is bounded above by
\[
\int_{\mathcal N}\int_{-t}^t|f(s,n)|dsdn,
\]
where $dn$ is the $(d-1)$-dimensional volume form on $\mathcal N$ and 
$f(s,n)$ is the (uniformly bounded) Jacobian of the exponential map of the normal bundle
on $\mathcal N$ in $\mathcal M$ (see \cite{HK} for the details). Thus
\[
\psi_{\mathcal M\cap B^D(x,r)}(t)\leqslant ct,
\]
and the first hypothesis of Theorem \ref{infinity} holds with $\beta=1$.
Finally, as we mention before, balls in $\mathbb R^D$
satisfy the second hypotheses of the same theorem with $\gamma=D+1$
(\cite[Chapter 5] {M}).
 \end{proof}

\section*{acknowledgements}
The authors wish to thank Dmitriy Bilyk for several conversations concerning
the results on discrepancy contained in this paper.


\end{document}